
\documentclass[smallextended,referee,envcountsect]{mysvjour3}
\smartqed
\usepackage{graphicx}


\usepackage{makeidx,epsfig}
\usepackage[margin=1.0in]{geometry}
\usepackage{amssymb,amsfonts}
\usepackage{amsmath}
\usepackage{amsbsy}
\usepackage{verbatim}
\usepackage[bottom]{footmisc}
\usepackage{algorithm}
\usepackage{algorithmicx}
\usepackage{algpseudocode}
\usepackage{colortbl,xcolor,pgfplots}
\usepackage{algorithm,algorithmicx,algpseudocode}

\newcommand{\OOmega}{\Omega\!\!\!\!\Omega}


\usepackage[latin1]{inputenc}
\usepackage[english]{babel}


\usepackage{makeidx}
\usepackage{capt-of}
\usepackage[T1]{fontenc}
\usepackage{amsfonts}
\usepackage{amscd}
\usepackage{amssymb}
\usepackage{tabularx}
\usepackage{amssymb,bezier,graphicx,}
\usepackage{times,amssymb}
\usepackage{graphicx}
\usepackage{subfigure}
\usepackage{color}
\usepackage{cancel}
\usepackage{fancybox}
\usepackage{fancyhdr}
\usepackage{hyperref}
\usepackage{tikz}
\usepackage{enumerate}
\usepackage{epstopdf}
\usepackage{array}
\usepackage{slashbox}
\usepackage{verbatim}
\usepackage{float}
\usepackage[colorinlistoftodos]{todonotes}
\usepackage{framed}
\usepackage{anysize}
\usepackage{pdfpages}
\usepackage{stackrel}
\usepackage{booktabs}
\usepackage[mathscr]{eucal}
\usepackage{bm}
\usepackage{multirow}
\usepackage{caption}
\usepackage{mathrsfs} 










\usetikzlibrary{shapes,arrows,trees,positioning,through}
\usetikzlibrary{calc}


\sloppy \allowdisplaybreaks


\newtheorem{Thm}{Theorem}[section]
\newtheorem{Lem}{Lemma}[section]

\newtheorem{Cor}{Corollary} 
\newtheorem{Def}{Definition}[section]


\begin{document}

\title{On location-allocation problems for dimensional facilities}


\author{Lina Mallozzi \and Justo Puerto \and Moisés Rodríguez-Madrena}

\institute{Lina Mallozzi \at
             University of Naples Federico II, Naples, Italy\\
              lina.mallozzi@unina.it
           \and
           Justo Puerto,  Corresponding author  \at
              IMUS, Universidad de Sevilla, Seville, Spain \\
              puerto@us.es
           \and
           Moisés Rodríguez-Madrena   \at
              IMUS, Universidad de Sevilla, Seville, Spain \\
              madrena@us.es
}


\maketitle

\begin{abstract}
This paper deals with a bilevel approach of the location-allocation problem with dimensional facilities. We present a general model that allows us to consider very general shapes of domains for the dimensional facilities and we prove the existence of optimal solutions under mild, natural assumptions. To achieve these results we borrow tools from optimal transport mass theory that allow us to give explicit solution structure of the considered lower level problem. We also provide a discretization approach that can approximate, up to any degree of accuracy, the optimal solution of the original problem. This discrete approximation can be optimally solved via a mixed-integer linear program. To address very large instance sizes we also provide a GRASP heuristic that performs rather well according to our experimental results. The paper also reports some experiments run on test data.
\end{abstract}
\keywords{Bilevel optimization \and Dimensional facilities \and Optimal transport mass \and Mixed-integer programming \and Heuristics}
\subclass{90B85 \and  49M25 \and 90B80 \and 90C30 }


\section{Introduction}

Location-allocation problems are very important problems nowadays in the area of Operation Research and  Logistics: they consists of finding the placement of a number of servers and deciding the assignments of the existing demand in order to minimize some general objective function.
See for example \cite{LNG2016LS,MDP2017,NP05}.
Depending on the framework, the problem can be cast within the family  of continous non-convex or mixed-integer programming problems and in
some cases is closely related with the design of Voronoi partitions (\cite{Okabe}) in computational geometry.  These problems are important by themselves for their mathematical implications but also by their many applications to several important  areas such as territorial design, market share and hub-and-spoke design, voting districts, shape optimization,  etcetera (\cite{BLPartII,Diaz,Drezner,Kalcsics2015,LHmarketArea,surveyTOPJusto}).

Sometimes these servers can be identified with extended domains: in this case we will speak about  dimensional facilities.
Mathematically, a dimensional facility location problem corresponds to finding the best position of a geometrical  figure (\cite{NPR03,PR10}).
The resolution of the problem in this case must take care of the optimizing aspect of a certain utility function and also of the geometry of the facility.

In spite of their importance, to the best of our knowledge, the consideration of location-allocation problems with respect to
dimensional facilities has not been extensively considered in the literature. Some exception is the paper  \cite{optPartition}.

There is a number of papers in literature dealing with the so called location-allocation problem, i.e., a combination of the two tasks, where one asks for
the best positions of the servers together with the best partition of the demand. The location-allocation approach gives rise to a natural bilevel
optimization problem where in the first level the location decisions are made under the constraint that the allocation will be given as
a best reply function.  This bilevel problem is in general hard to solve. In the particular case where the facilities are dimensional it
becomes harder.  See for references Ch. 14 in \cite{LNG2016LS}, Ch. 5 in  \cite{MDP2017}.

Situations like these appear very often in Game Theory when two players compete in a hierarchical scheme and the model is usually called
Stackelberg game (or Leader/follower game).   In these bilevel problems  we have almost never an explicit expression of the solution for
the lower level problem to be considered and then be included to help in solving the upper level one.

Sometimes and under some suitable assumptions, the solution of the lower level problem (the so-called best reply) is obtained explicitly and this helps in the resolution of the upper level. This happens, for example, when we use optimal transport tools as done in \cite{CarMallozzi2018,MaPa}.

This theory started with the problem of moving a pile of sand into a hole of the same volume minimizing the transportation cost, formulated by Monge. Then, Kantorovich relaxed the problem providing a dual formulation. Recently these classical results have been used in a large number of application contexts as Transportation, Logistics, Physics, etc. (\cite{CuestaMatran,A,gal2017,Villani}).

By using optimal transport theory it is possible to obtain a structure of the solution of the lower level and then to  prove the existence of the solution of the bilevel model. Moreover, the obtained structure of the optimal partition, that optimizes the demand problem, is fundamental in order to develop some approximation results and some computational algorithms.

This paper generalizes previous  result in \cite{optPartition} since that paper only considered the lower level problem and with particular shapes for the dimensional facilities. Moreover, the contribution of this paper is threefold. First, we formulate the bilevel location-allocation problem for very general dimensional facilities and  prove, under suitable conditions, the existence of optimal solutions. Secondly, we give an approximation scheme to solve the problem, discretizing some of its elements, providing convergence results to the optimal solution of the original problem. Finally, we also develop an exact solution algorithm applicable to the discrete approximation scheme that reduces the problem to solve a mixed-integer linear problem. In addition,  we also propose a GRASP heuristic  that performs very-well experimentally in large size instances. The paper also reports our computational experiments with different test cases. For the sake of readability, we restrict ourselves to the 2-dimensional setting although most of the results in this paper extend further to finite dimension spaces.

The rest of the  paper is organized  as follows: in the first section the bilevel problem is presented and existence results of optimal solutions are obtained; in the second section a discretization scheme is defined and some convergence theorems are proved; in section three different solution approaches are compared: an exact mixed-integer linear programming model and a GRASP heuristic are tested and the reported are presented. The paper finishes with some conclusions and an outline for future research.

\section{A bilevel model and existence of optimal solutions}

\subsection{Bilevel approach}

We are given  $\Omega$, a Borel, compact subset of  $\mathbb{R}^2$, that represents a demand region. We assume that customers in $\Omega$ are distributed according to a  demand density $D\in \mathcal{L}^2(\Omega)$ that is  an absolutely continuous probability measure, where $D: \Omega \to \mathbb{R}$  is a nonnegative function with unit integral $\int_\Omega D(q) dq=1$, being $q=(x,y)\in \Omega$ and  $dq=dxdy$. The goal is to locate $\rho$ given compact sets $P_1,...,P_{\rho}$ ($\rho\in \mathbb{N}$) in $\Omega$, assuming that all of them are the closure of nonempty open connected sets, representing some service centers with dimensional extension. From now on, any set with these properties will be called a \emph{dimensional facility}.

For each $i\in \{1,...,\rho\}$, we consider that the location of the dimensional facility $P_i$ in the plane is determined by the location of a point $p_i=(px_i,py_i)\in P_i$ called its \emph{root point}:  we use the notation $P_i^{q_i}$ to refer to the dimensional facility $P_i$ when its root point $p_i$ is located (fixed) at point $q_i\in \mathbb{R}^2$. This means that the set $P_i^{\tilde{q}_i}$ is the set $P_i^{q_i}$ when we apply to it the translation induced by the vector $\overrightarrow{q_i \tilde{q}_i}\in \mathbb{R}^2$, for any $q_i,\tilde{q}_i\in \mathbb{R}^2$ (see Figure \ref{closedset}). In other words, the shape of the dimensional facility $P_i$ is the same for any possible location in the plane. Each dimensional facility is then determined within the region $\Omega$ locating its root point.

\begin{figure}[H]
\centering
\includegraphics[scale=0.3]{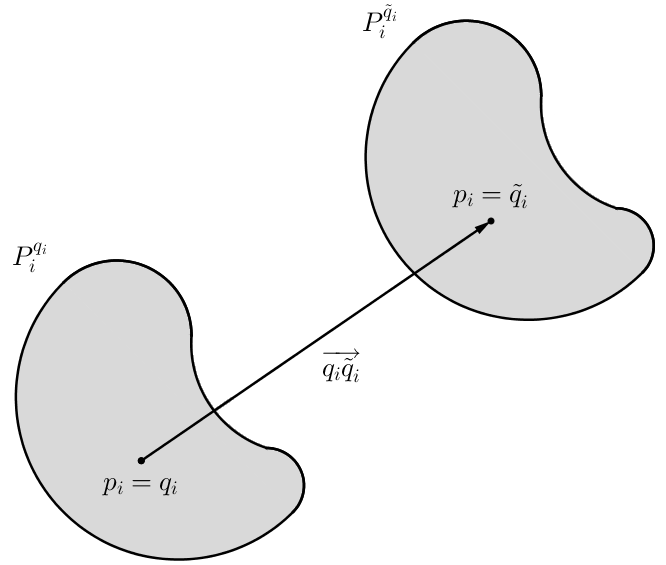}
\caption{Two possible locations in the plane for the dimensional facility $P_i$} \label{closedset}
\end{figure}

The problem considered in this paper is to locate $\rho$ dimensional facilities $P_1,...,P_{\rho}$ in $\Omega$ and also to find the partition (market share) $A_1,....,A_{\rho}$ satisfying that the dimensional facility $P_i$ serves the consumer demand in the region ${A}_i\subseteq \Omega$ optimizing a suitable criterion: we will find  a partition of the set $\Omega\setminus \{\text{int}(P_1)\cup...\cup \text{int}(P_{\rho})\}$, i.e., a finite family $(A_i)_{i=1}^{\rho}$ of pairwise disjoint Borel sets such that $\bigcup_{i=1}^{\rho} {A}_i= \Omega\setminus \{\text{int}(P_1)\cup...\cup \text{int}(P_{\rho})\}$ up to $D$-negligible sets.

We require that the location of the dimensional facilities $P_1,...,P_{\rho}$ in $\Omega$ must satisfy that the interior of the closed sets do not intersect and obviously that $P_i\subseteq \Omega$ for all $i\in\{1,...,\rho\}$.
A family of $\rho$ dimensional facilities that satisfy the above conditions will be called a \emph{suitable solution}. We also assume that there is a location of the dimensional facilities verifying the above conditions, i.e., the problem considered has at least one suitable solution.

In order to formally describe the set of suitable solutions for the dimensional facilities $P_1,...,P_{\rho}$, we introduce the following notation: let $\Omega_i$ denote the region of $\mathbb{R}^2$ in which locating $p_i$ makes  $P_i$ to be contained in $\Omega$, i.e.,
$$\Omega_i = \{q_i\in\mathbb{R}^2 : P_i^{q_i}\subseteq \Omega\},$$
for each $i\in\{1,...,\rho\}$. Obviously, $\Omega_i\subseteq \Omega$, for all $i\in\{1,...,\rho\}$.
Then, the set of suitable solutions is
$$\Gamma = \{(q_1,...,q_{\rho})\in \Omega_1\times ... \times \Omega_{\rho} : \text{int}(P_i^{q_i})\cap \text{int}(P_j^{q_j})=\emptyset, \forall i,j\in\{1,...,\rho\},i\not= j\}.$$ Clearly, $\Gamma\subseteq \Omega^{\rho}\subseteq \mathbb{R}^{2\rho}$. Recall that we are assuming that $\Gamma \not= \emptyset$.

We consider that the utility $u$ paid from a point $q\in \mathbb{R}^2$ with respect to the dimensional facility $P_i$ is given by a continuous function $u_i:\mathbb{R}^2\times \mathbb{R}^2 \to \mathbb{R}$ that depends on the considered point $q$ and the location $q_i\in \mathbb{R}^2$ of the dimensional facility $P_i$:
\begin{equation*} \label{dcons}
u (q, P_i^{q_i}) = u_i (q,q_i),
\end{equation*}
for each $i\in \{1,...,\rho\}$. To clarify the meaning of choosing the utility $u$ in this way, we indicate some interesting particular cases (among others) of $u$ and their interpretations:

\begin{enumerate}[$-$]

\item Service point case: this is the most intuitive situation. Here, the customer point $q\in \Omega$ has to   reach the service point in $P_i$ (or vice versa) to satisfy its demand. Assume that the role of the service point is played by the root point $p_i$ of $P_i$. Then, $u$ can be chosen as
$$ u (q, P_i^{q_i}) = f_i (\gamma_i (q-q_i)),$$
being $f_i:\mathbb{R} \to \mathbb{R}$ a continuous function and $\gamma_i: \mathbb{R}^2 \to \mathbb{R}$ a norm, and where we are considering a measure of the distance between $q$ and $q_i$ according to the norm $\gamma_i$. Note that, although in this case the utility does not depend on the shape of the dimensional facility $P_i$ but only on the location of $p_i$, the shape of the dimensional facilities still plays a role in the problem since it determines the set of suitable solutions $\Gamma$ and also some others aspects of the problem as we will see later.

\item Utility dependent on the shape of the facility: in this case the measure of the distance from the customer point $q\in \Omega$ to the dimensional facility $P_i$ is related to its shape. In particular, we can consider the following cases:

\begin{enumerate}[$\bullet$]

\item Utility induced by the Minkowski functional (see \cite{rockafellarConvex}): assume that the dimensional facility $P_i$ is closed, convex, with non empty interior, then
$P_i$ induces a gauge $\gamma_{P_i}: \mathbb{R}^2\to \mathbb{R}$ defined by the Minkowski functional
$$ \gamma_{P_i}(q) =  \inf\left\{ \lambda > 0 : q\in \lambda P_i^{\underline{0}}\right\}, $$
where $\underline{0}=(0,0)\in \mathbb{R}^2$ and $\lambda P_i^{\underline{0}}$ denotes the resulting set from applying the homothecy of center $\underline{0}$ and ratio $\lambda$ to the set  $P_i^{\underline{0}}$. Observe that $\gamma_{P_i}(q)=1$ if $q\in\partial P_i^{\underline{0}}$ and that $\gamma_{P_i}(q)<1$ if $q\in\text{int}(P_i^{\underline{0}})$. Hence, a way to measure how far is the customer point $q\in \Omega$ from the dimensional facility $P_i^{\underline{0}}$ is using the continuous functional
$$ \tilde{\gamma}_{P_i}(q) = \begin{cases}  \gamma_{P_i}(q)-1, & \hbox{if $q\notin P_i^{\underline{0}}$,} \\  0, & \hbox{if $q\in P_i^{\underline{0}}$,} \end{cases}  $$
where only the points in the set $P_i^{\underline{0}}$ have assigned the value $0$.
Taking into account the above discussion, a natural way to define the utility in this context is
$$ u(q,P_i^{q_i}) = \begin{cases}  f_i(\gamma_{P_i}(q-q_i)-1), & \hbox{if $q\notin P_i^{q_i}$,} \\  f_i(0), & \hbox{if $q\in P_i^{q_i}$,} \end{cases}  $$
where $f_i:\mathbb{R} \to \mathbb{R}$ is a continuous function. Note that the utility $u(q,P_i^{q_i})$ depends on the root point $p_i$ of the dimensional facility $P_i$ as well as of its shape.

\item Conservative planner: this is the case in which the utility $u$ obtained from a customer point $q\in \Omega$ with respect to the dimensional facility $P_i^{q_i}$ is chosen as the maximum distance between $q$ and $P_i^{q_i}$ (see \cite{Brazil2014}), i.e., $u(q,P_i^{q_i}) = \max_{\tilde{q}\in P_i^{q_i}} \gamma_i(q-\tilde{q}),$
being $\gamma_i$ a norm. Or more generally,
$ u(q,P_i^{q_i}) = f_i\left(\max_{\tilde{q}\in P_i^{q_i}} \gamma_i(q-\tilde{q})\right), $
where  $f_i:\mathbb{R} \to \mathbb{R}$ is a continuous function. In the particular case in which $P_i$ is a polygon, we observe that the utility can be obtained as
$ u(q,P_i^{q_i}) = f_i\left(\max_{j=1,...,n_i} \gamma_i(q-[q_i+v_j^i]) \right),$
where $\{v_1^i,...,v_{n_i}^i\}$ are the vertices of $P_i^{\underline{0}}$. This last observation is interesting from a computational point of view. 

\end{enumerate}

\end{enumerate}


Given a suitable solution $Q = (q_1,...,q_\rho)\in \Gamma$, we introduce the notation $\Omega(Q)=\Omega \setminus \{\text{int}(P_1^{q_1})\cup ... \cup \text{int}(P_{\rho}^{q_\rho})\}$ to indicate the region of $\Omega$ to be partitioned as a function of the location of the dimensional facilities. In addition, we denote by $\mathcal{A}_{\rho}(Q)$ the set of all partitions, up to $D$-negligible sets, in $\rho$ sub-regions of the region
$\Omega(Q)$ and by $A(Q) = (A_1(Q),..., A_{\rho}(Q))$ an element of $\mathcal{A}_{\rho}(Q)$.

In the spirit of a \emph{social planner}, we are interested in  finding a partition $A(Q)=(A_1(Q),...,A_{\rho}(Q))$  of the customers in $\Omega(Q)$ solving the problem:
$$ \displaystyle  \min_{A(Q)\in \mathcal{A}_{\rho}(Q)}\sum_{i=1}^{\rho}   \left\lbrace \int_{{A}_i(Q)}  [ a_i + u(q, P_i^{q_i}) ] D(q) dq    \right\rbrace,    \eqno{\text{LL$(Q)$}}   $$
where $a_i>0$ is the cost incurred by each customer to access dimensional facility $P_i$ per unit demand and the second term in each integral
$$U_i(A_i(Q))=  \int_{{A}_i(Q)} u(q, P_i^{q_i}) D(q) dq$$
is the  distribution cost in the service region ${A}_i(Q)$, for each $i\in \{1,...,\rho\}$.



In a second step, the planner proposes the best location of the $\rho$ facilities in such a way that some additional costs  are minimized, knowing that, given a suitable solution $Q$, the best partition of the customers is given by solving the lower level problem LL$(Q)$. These additional costs are: 1) the installation cost of each facility; 2) a cost due to the waiting time to be served by each facility; 3) a cost induced by the demand that is lost. In the following we describe in detail these costs.

\begin{enumerate}[1)]
\item \textbf{Installation cost}: suppose that in  $\Omega$, besides of a demand density $D$, there exists another absolutely continuous measure $B\in \mathcal{L}^2( \Omega)$ to model the base installation costs. We assume that $B: \Omega \to \mathbb{R}$  is a nonnegative function with finite integral
$\int_\Omega B(q) dq < \infty$.
For a suitable solution $Q=(q_1,...,q_{\rho})\in \Gamma$, the installation cost of the dimensional facility $P_i$ is modeled by the non-decreasing continuous function $I_i: \omega^I_i\in \mathbb{R}  \to I_i(\omega^I_i)\in [0,+\infty)\subseteq \mathbb{R}$, being   $\omega^I_i= \int_{P_i^{q_i}}  B(q) dq$, for each $i\in\{1,...,\rho\}$. There are many realistic installation costs that fit within this framework: standard set up cost fits by taking $I_i(\omega^I_i)=F_i\in\mathbb{R}$ for all $\omega^I_i\in \mathbb{R}$; square meter cost is obtained assuming that $B$ is the density of the square meter cost in $\Omega$ and that $F_i\in\mathbb{R}$ is the fixed cost of building the dimensional facility $P_i$, then the  installation cost of $P_i$ is $I_i(\omega^I_i)=F_i+\int_{P_i^{q_i}} B(q) dq $ for all $\omega^I_i\in \mathbb{R}$, $i\in\{1,...,\rho\}$; square meter cost with economy of scale also fits taking  $I_i(\omega^I_i)=F_i+\tilde{I}_i(\int_{P_i^{q_i}} B(q) dq )$, being $\tilde{I}_i: \omega^I_i\in \mathbb{R}  \to \tilde{I}_i(\omega^I_i)\in [0,+\infty)\subseteq \mathbb{R}$ a non-decreasing, continuous and concave function, for all $\omega^I_i\in \mathbb{R}$ and $i\in\{1,...,\rho\}$.

\item \textbf{Congestion cost}: if $A(Q)=(A_1(Q),...,A_{\rho}(Q))$ is a partition of the customers in $\Omega(Q)$ for a suitable solution $Q\in \Gamma$, we consider the congestion cost $C_i: \omega^C_i\in [0,1] \to C_i(\omega_i^C)\in [0,+\infty)\subseteq\mathbb{R}$ for facility $P_i$, where $\omega^C_i= \int_{{A}_i(Q)}  D(q) dq$ and $C_i$ is non-decreasing and continuous, for any $i\in \{1,...,\rho\}$. Congestion cost is the most relevant of the above mentioned additional costs, since as we will see, it induces in our problem a hierarchical structure of bilevel optimization.

\item \textbf{Lost demand cost}: a lost demand cost is computed over the lost demand in $\Omega\setminus \Omega (Q)=\{\text{int}(P_1^{q_1})\cup...\cup \text{int}(P_{\rho}^{q_\rho})\}$. Lost demand cost is given by $L: \omega^L\in [0,1] \to L(\omega^L)\in [0,+\infty)\subseteq\mathbb{R}$, being $L$ a non-decreasing and continuous function, and where  $\omega^L=\int_{P_1^{q_1}\cup... \cup P_{\rho}^{q_{\rho}}} D(q)dq = \sum_{i=1}^{\rho} \int_{P_i^{q_i}} D(q) dq$. We are assuming that demand in the region $P_i^{q_i}\subseteq \Omega$ is incompatible with installation of $P_i$ within that region, for any $i\in\{1,...,\rho\}$, and therefore, lost demand has to be accounted for. This assumption can be dropped taking $L(\omega^L)=0$ for all $\omega^L\in [0,1]$.

\end{enumerate}

The costs above induce the following constrained optimization problem. The optimal suitable solution of the dimensional facilitites $Q=(q_1,...,q_{\rho})\in \Gamma$ can be obtained solving the following  bilevel problem:
$$ \begin{array}{ll} \displaystyle  \min  &    \mathcal{F}(Q) \\
s.t. &   \widehat{A}(Q)\in \arg\displaystyle  \min_{A(Q)\in \mathcal{A}_{\rho}(Q)}\sum_{i=1}^{\rho}   \left\lbrace \int_{{A}_i(Q)}  [ a_i + u(q, P_i^{q_i}) ] D(q) dq    \right\rbrace,   \\
     &   Q \in \Gamma,  \end{array}  \eqno{\text{BL}} $$
being
$$ \mathcal{F} (Q) := \displaystyle   \sum_{i=1}^{\rho}  \left[ I_i \biggl( \int_{P_i^{_i}} B(q) dq \biggr)  +  C_i \biggl( \int_{\widehat{A}_i (Q)}  D(q) dq \biggr)\right]
       + L\biggl( \int_{P_1^{q_1}\cup ... \cup P_{\rho}^{q_{\rho}}} D(q) dq\biggr).$$
Observe that for a given suitable solution $Q\in \Gamma$, the partition $\widehat{A}(Q)$ of $\Omega(Q)$ is given by a solution of problem LL$(Q)$. The solution of the location-allocation problem will be the pair $(Q^*, \widehat{A}(Q^*))$ where $Q^*$ solves problem BL.
Let us remark that if $Q^*=(q^*_1,...,q^*_{\rho})$ is an optimal suitable solution of the bilevel problem BL, then for any $i\in \{1,...,{\rho}\}$, if the dimensional facility $P_i^{q_i^*}$ is part of the optimal suitable solution then it  is uniquely  determined by the location $q_i^*$ of its root point $p_i$, since we are assuming that its shape is fixed.


\subsection{Resolution via optimal transport mass}


Consider problem LL$(Q)$ for a given suitable solution $Q\in\Gamma$. We point out that for dimensional facilities, we can not directly apply the optimal transport theory as done in \cite{CarMallozzi2018,MaPa,optPartition}, because the characterization  of the optimal partition holds when the measure $\nu$ has a discrete support. However, we can prove the existence of solution for problem LL$(Q)$ by identifying each dimensional facility with its root point, giving to the measure a discrete support, as the proof of the following theorem shows. Thus, building upon the results that appear in the mentioned works, we can obtain a result similar to the one given in those papers but applicable in this more general framework.

\begin{Thm}\label{bestReply}
Let $Q=(q_1,...,q_{\rho})\in \Gamma$. Suppose that the set
\begin{equation}\label{Dnegligible}
\{q\in \Omega(Q): a_i + u(q,P_i^{q_i}) = a_j + u(q,P_j^{q_j})\}
\end{equation}
is $D$-negligible, for all $i,j\in \{1,...,\rho\}$ with $i\not= j$.
Then problem LL$(Q)$ admits a unique solution $A(Q)=(A_1(Q),...,A_{\rho}(Q))$ that verifies
\begin{equation}\label{optPart}
A_i(Q) = \{q\in \Omega(Q): a_i + u(q,P_i^{q_i}) < a_j + u(q,P_j^{q_j}), \forall j\in \{1,...,\rho\}, j\not=i\}
\end{equation}
for each $i\in\{1,...,\rho\}$, where the equalities are intended up to $D$-negligible sets.
\end{Thm}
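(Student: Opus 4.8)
The plan is to exploit the pointwise-separable structure of LL$(Q)$. For $i\in\{1,\dots,\rho\}$ write $g_i(q):=a_i+u(q,P_i^{q_i})$; since $u$ is continuous and each $q_i$ is fixed, every $g_i$ is a continuous function of $q$ on $\Omega(Q)$. For an arbitrary partition $A(Q)=(A_1(Q),\dots,A_\rho(Q))\in\mathcal{A}_{\rho}(Q)$ the objective reads $\sum_{i=1}^{\rho}\int_{A_i(Q)}g_i(q)\,D(q)\,dq$, and because the regions $A_i(Q)$ are pairwise disjoint and cover $\Omega(Q)$ up to $D$-negligible sets, the demand mass $D(q)\,dq$ at each point is accounted for exactly once. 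Hence the integrand can be minimized pointwise: the cheapest way to serve the demand at $q$ is to assign it to an index realizing $\min_i g_i(q)$. This already suggests that the partition into the cells where a single $g_i$ is strictly smallest, i.e. \eqref{optPart}, is optimal.

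First I would establish a lower bound valid for every feasible partition. Setting $g(q):=\min_{1\le i\le\rho}g_i(q)$, which is continuous hence Borel, one has $g_i(q)\ge g(q)$ everywhere, so
$$\sum_{i=1}^{\rho}\int_{A_i(Q)}g_i(q)\,D(q)\,dq\;\ge\;\sum_{i=1}^{\rho}\int_{A_i(Q)}g(q)\,D(q)\,dq\;=\;\int_{\Omega(Q)}g(q)\,D(q)\,dq,$$
the last equality using that $(A_i(Q))_i$ partitions $\Omega(Q)$ up to $D$-negligible sets. Thus $\int_{\Omega(Q)}g\,D\,dq$ is a lower bound for LL$(Q)$. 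Next I would verify that the candidate cells in \eqref{optPart} form an admissible partition attaining this bound. Measurability is clear since each $A_i(Q)=\{q\in\Omega(Q): g_i(q)<g_j(q),\ \forall j\neq i\}$ is relatively open in $\Omega(Q)$; disjointness is immediate from the strict inequalities; and a point fails to lie in any cell precisely when the minimum $g(q)$ is attained by at least two distinct indices, i.e. only when $q$ belongs to one of the tie sets \eqref{Dnegligible}, which are $D$-negligible by hypothesis. Hence $\bigcup_i A_i(Q)=\Omega(Q)$ up to a $D$-null set, so the candidate is feasible, and on each $A_i(Q)$ one has $g_i(q)=g(q)$ by definition, so it attains the lower bound and is therefore optimal.

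For uniqueness, let $\widetilde{A}(Q)=(\widetilde{A}_1(Q),\dots,\widetilde{A}_{\rho}(Q))$ be any optimal partition. Optimality together with the lower bound gives $\sum_{i=1}^{\rho}\int_{\widetilde{A}_i(Q)}\big(g_i(q)-g(q)\big)D(q)\,dq=0$; since each integrand is nonnegative, $g_i=g$ holds $D$-a.e.\ on $\widetilde{A}_i(Q)$ for every $i$. By the tie-negligibility assumption the minimum $g(q)$ is attained by a unique index for $D$-a.e.\ $q$, so the equality $g_i(q)=g(q)$ forces $q\in A_i(Q)$ up to $D$-null sets; hence $\widetilde{A}_i(Q)=A_i(Q)$ modulo $D$-negligible sets, which is exactly the asserted uniqueness.

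The only genuinely delicate point is the ``up to $D$-null sets'' bookkeeping: the tie sets \eqref{Dnegligible} are simultaneously the obstruction to the candidate cells covering $\Omega(Q)$ and to the pointwise minimizer being unique, and I must control them for all pairs $i\neq j$ at once --- which is exactly what the hypothesis grants, since their finite union remains $D$-negligible. I expect the optimal-transport viewpoint announced before the statement to be an alternative packaging of the same fact: identifying each facility $P_i^{q_i}$ with its root point turns LL$(Q)$ into a semi-discrete transport problem whose optimal (Laguerre-type) cells coincide with \eqref{optPart}, so the conclusion can equivalently be read off from the cited results.
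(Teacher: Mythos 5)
Your proof is correct, but it takes a genuinely different and more elementary route than the paper's. The paper proves Theorem \ref{bestReply} by embedding LL$(Q)$ in optimal transport: it normalizes the density to $\tilde{D}$, rewrites the problem as an outer minimization over the simplex $S$ of marginal weights $\omega$, uses Tietze's extension theorem to manufacture a continuous cost $c$ with $c(q,\tilde{q}_i)=u_i(q,q_i)$ at $\rho$ distinct anchor points, invokes Theorem 2.1 of \cite{optPartition} and the map--partition correspondence of Remark 1 in \cite{MaPa} to identify the inner problem with a semi-discrete Monge problem of value $\mathcal{W}_c(\tilde{\mu},\nu(\omega))$, and obtains existence from continuity of the Wasserstein functional on the compact set $S$; the explicit form \eqref{optPart} and uniqueness are then only obtained by ``adapting the proofs of Lemma 2 and Theorem 2 in \cite{MaPa}'', not written out. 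You instead verify everything directly: the lower bound $\int_{\Omega(Q)} g\,D\,dq$ with $g=\min_i g_i$, feasibility of the strict-inequality cells (relatively open, hence Borel; the uncovered residual lies in the finite union of the tie sets \eqref{Dnegligible}, which is $D$-negligible), attainment, and uniqueness from $D$-a.e.\ uniqueness of the minimizing index --- your closing step from $\widetilde{A}_i\subseteq A_i$ a.e.\ to equality tacitly uses that both families partition $\Omega(Q)$ up to null sets, which is worth one explicit line. What each approach buys: yours is self-contained and in fact proves more of the statement in detail than the paper's own write-up; the pointwise argument works precisely because the planner's problem has free marginals, so demand at each point can be assigned independently. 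The paper's transport formulation is the natural one when the weights $\omega_i$ are constrained (fixed capacities), where pointwise assignment fails, and it keeps the lower level aligned with the framework of \cite{CarMallozzi2018,MaPa,optPartition} on which the rest of the paper's narrative rests; your last remark correctly identifies the two arguments as the unconstrained specialization of the same semi-discrete structure.
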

\begin{proof} To prove the existence of solution for problem LL$(Q)$, we rewrite it as a Monge optimal transport problem (see Section 2.1 in \cite{optPartition}). In the proof, we use the absolutely continuous probability measure $\tilde{\mu}(q) = \tilde{D}(q) dq$ being $\tilde{D}(q) = \dfrac{1}{\int_{\Omega(Q)} D(q)dq} D(q)$. Indeed, we prove the existence of solution for the auxiliary problem
\begin{equation}
\inf_{A(Q)\in \mathcal{A}_{\rho}(Q)} \sum_{i=1}^{\rho}   \left\lbrace \int_{{A}_i(Q)}  [ a_i + u(q, P_i^{q_i}) ] \tilde{D}(q) dq    \right\rbrace , \label{LLtilde}
\end{equation}
which implies the existence of solution for problem LL$(Q)$.

Let $S$ be the unit simplex in $\mathbb{R}^{\rho}$ defined by $S=\left\lbrace \omega =(\omega_1,...,\omega_{\rho})\in \mathbb{R}^{\rho} : \omega_i\geq 0, \sum_{i=1}^{\rho} \omega_i = 1\right\rbrace$. Then, we can rewrite  problem (\ref{LLtilde}) in the following form:
\begin{equation}
\inf_{\omega\in S}  \left(  \inf_{A(Q)\in \mathcal{A}_{\rho}(Q)} \left\lbrace \sum_{i=1}^{\rho}   \left[ \int_{{A}_i(Q)} u_i(q, q_i) \tilde{D}(q)dq \right] : \int_{{A}_i(Q)} \tilde{D}(q)dq = \omega_i \right\rbrace + \sum_{i=1}^{\rho} a_i \omega_i \right). \label{rewrite1}
\end{equation}

Let $\tilde{q}_1,...,\tilde{q}_{\rho}$ be any $\rho$ points in $\Omega (Q)$ such that $\tilde{q}_i\not= \tilde{q}_j$, for all $i,j\in \{1,...,\rho\}$ with $i\not=j$. By Tietze's extension theorem, there exists a continuous function $c: \Omega(Q)\times \Omega(Q) \to [0,+\infty]$ such that $c(q,\tilde{q}_i) = u_i(q,q_i)$, for any  $q\in \Omega(Q)$ and $i\in\{1,...,\rho\}$. Given $\omega = (\omega_1,...,\omega_{\rho})\in S$, consider the Monge optimal transport problem
\begin{equation}
\inf_{T_\sharp \tilde{\mu} = \nu(\omega)} \int_{\Omega(Q)} c(q,T(q))d\tilde{\mu}(q) \label{OpTransport}
\end{equation}
being $\nu(\omega) = \sum_{i=1}^{\rho}  \omega_i \delta_{\tilde{q}_i}$.

By Theorem 2.1 in \cite{optPartition} there exists a solution for problem (\ref{OpTransport}) and it is equivalent to its corresponding Kantorovich relaxed Monge's formulation:
\begin{equation}
\inf_{T_\sharp \tilde{\mu} = \nu(\omega)} \int_{\Omega(Q)} c(q,T(q))d\tilde{\mu}(q) = \mathcal{W}_c(\tilde{\mu},\nu(\omega)). \label{relaxation}
\end{equation}

By Remark 1 in \cite{MaPa}, in the problem (\ref{OpTransport}) any transport map $T$ is associated to a partition $(A_i)_{i=1}^{\rho}$ of $\Omega(Q)$ in such a way that
$$T(q)=\sum_{i=1}^{\rho} \tilde{q}_i \bm{1}_{A_i}(q) \quad \text{and} \quad \tilde{\mu}(A_i) = \omega_i.$$
Conversely any partition $(A_i)_{i=1}^{\rho}$ of $\Omega(Q)$ satisfaying $\tilde{\mu}(A_i)=\omega_i$ corresponds to a transport map of the form above. Then, we have that
\begin{eqnarray}
&& \inf_{T_\sharp \tilde{\mu} = \nu(\omega)} \int_{\Omega(Q)} c(q,T(q))d\tilde{\mu}(q)  = \inf_{A(Q)\in \mathcal{A}_{\rho}(Q)} \left\lbrace \int_{\Omega(Q)} c(q,\sum_{i=1}^{\rho} \tilde{q}_i \bm{1}_{A_i}(q))d\tilde{\mu}(q) :  \tilde{\mu}(A_i)=\omega_i \right\rbrace \nonumber \\
&& = \inf_{A(Q)\in \mathcal{A}_{\rho}(Q)} \left\lbrace \sum_{i=1}^{\rho}   \left[ \int_{{A}_i(Q)} u_i(q, q_i) \tilde{D}(q)dq \right] : \int_{{A}_i(Q)} \tilde{D}(q)dq = \omega_i \right\rbrace . \label{remarkResult}
\end{eqnarray}

Using equalities (\ref{LLtilde}) $ = $ (\ref{rewrite1}), (\ref{relaxation}) and (\ref{remarkResult}), we rewrite problem (\ref{LLtilde}) as:
\begin{equation*}
 \inf_{A(Q)\in \mathcal{A}_{\rho}(Q)} \sum_{i=1}^{\rho}   \left\lbrace \int_{{A}_i(Q)}  [ a_i + u(q, P_i^{q_i}) ] \tilde{D}(q) dq    \right\rbrace  = \inf_{\omega\in S}  \left\lbrace  \mathcal{W}_c(\tilde{\mu},\nu(\omega)) + \sum_{i=1}^{\rho} a_i \omega_i \right\rbrace. \label{rewrite2}
\end{equation*}
The function $\mathcal{W}_c(\tilde{\mu},\nu(\cdot)) : S \to \mathbb{R}$ is continuous since $\mathcal{W}_c$ is the Wasserstein distance on the set $\mathcal{P}(\Omega(Q))$ of Borel probability measures on $\Omega(Q)$. As in addition $S$ is compact, there exists a minimizer for problem (\ref{LLtilde}).

The form and the uniqueness of the solution for problem LL$(Q)$ is obtained adapting the proofs of Lemma 2 and Theorem 2 in \cite{MaPa}, respectively. \qed
\end{proof}

Theorem \ref{bestReply} ensures problem LL$(Q)$ is feasible, moreover, explicitly gives the unique  solution, up to $D$-negligible sets, of the problem. Note that the unique solution of problem LL$(Q)$ given in Theorem \ref{bestReply} represents the natural choice of each customer point in $\Omega(Q)$ given a prescribed utility, i.e., each customer point decides to be served by the dimensional facility that charges him the lowest cost. So, the form of the solution (\ref{optPart}) provides a realistic modeling of the customers' behaviour.

For each particular case of utility and shape of the facilities, the condition that
(\ref{Dnegligible})
is $D$-negligible for all $i,j\in \{1,...,\rho\}$ with $i\not= j$, has to be guaranteed to  ensure that Theorem \ref{bestReply} is applicable. For example, for the conservative planner case and polygonal facilities, the condition is guaranteed for all $Q\in \Gamma$ whenever  $a_i\not= a_j$ for all $i,j\in \{1,...,\rho\}$ with $i\not=j$. This is not a strong assumption since the case $a_i=a_j$ can be tackle by slightly perturbing the values: $a_i+\varepsilon=a_j$ or $a_i=a_j+\varepsilon$ with $\varepsilon > 0$ small enough. Onwards, we assume that the hypothesis of Theorem \ref{bestReply} is satisfied for all $Q\in \Gamma$.

As the solution of problem LL$(Q)$ is unique for all $Q\in \Gamma$, we can define the \emph{best reply function} $\widehat{A}: Q\in \Gamma\to  \widehat{A}(Q)\in \mathcal{A}_{\rho}(Q)$, that  maps to a given suitable solution, the optimal partition of the customers given in (\ref{optPart}). In the same way, the function $\widehat{A}_i$ is the $i$-th projection of the function $\widehat{A}$, for each $i\in \{1,...,\rho\}$.

Taking into account the above, we can prove the existence of solution for problem BL.

\begin{Lem}\label{omegaClosed}
For any $i\in\{1,...,\rho\}$, the set $\Omega_i$ is closed. In addition, the set $\Omega_1\times ... \times \Omega_{\rho}$ is also closed.
\end{Lem}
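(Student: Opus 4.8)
The plan is to prove that each $\Omega_i$ is \emph{sequentially} closed and then to invoke the elementary fact that a finite Cartesian product of closed sets is closed. Since $\mathbb{R}^2$ is a metric space, sequential closedness coincides with closedness, so it suffices to show: if $(q_i^n)_{n\in\mathbb{N}}$ is a sequence in $\Omega_i$ with $q_i^n\to q_i$, then $q_i\in\Omega_i$, i.e.\ $P_i^{q_i}\subseteq\Omega$.

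To establish this I would fix an arbitrary point $x\in P_i^{q_i}$ and argue that $x\in\Omega$. Recalling that the shape of $P_i$ is translation invariant, we have $P_i^{q_i}=P_i^{\underline{0}}+q_i$, so there is some $k\in P_i^{\underline{0}}$ with $x=k+q_i$. Define $x_n:=k+q_i^n$. Because $q_i^n\in\Omega_i$, each $x_n$ belongs to $P_i^{q_i^n}=P_i^{\underline{0}}+q_i^n\subseteq\Omega$. Moreover $x_n=x+(q_i^n-q_i)\to x$, and since $\Omega$ is compact, hence closed, the limit $x$ lies in $\Omega$. As $x$ was an arbitrary point of $P_i^{q_i}$, this yields $P_i^{q_i}\subseteq\Omega$, that is $q_i\in\Omega_i$, proving that $\Omega_i$ is closed.

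For the second claim I would use that a finite product of closed sets is closed in $\mathbb{R}^{2\rho}$ with its product (equivalently, Euclidean) topology. Concretely, if $(q_1^n,\dots,q_\rho^n)\to(q_1,\dots,q_\rho)$ with $q_j^n\in\Omega_j$ for all $j$, then convergence is coordinatewise, so $q_j^n\to q_j$ for each $j$; the closedness of $\Omega_j$ just proved gives $q_j\in\Omega_j$, whence the limit belongs to $\Omega_1\times\dots\times\Omega_\rho$.

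The argument is essentially routine and I do not expect a genuine obstacle: the compactness (hence closedness) of $\Omega$ carries the whole weight once the right identification is made. The only step requiring a little care is the translation bookkeeping in the second paragraph, namely recognizing that a point of the limiting facility $P_i^{q_i}$ is the limit of the corresponding translates $k+q_i^n$, each of which is forced to lie in $\Omega$ precisely because $q_i^n\in\Omega_i$.
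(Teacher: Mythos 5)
Your proof is correct, but it takes a route complementary to the paper's. The paper argues on the complement: it shows $\mathbb{R}^2\setminus\Omega_i=\{q_i\in\mathbb{R}^2 : P_i^{q_i}\nsubseteq\Omega\}$ is open by picking a witness $q\in P_i^{q_i}$ with $q\notin\Omega$, separating $q$ from $\Omega$ by disjoint open sets (regularity of $\mathbb{R}^2$, with $\Omega$ closed since compact), extracting a ball $\mathcal{B}_{e}(q,\varepsilon)$ disjoint from $\Omega$, and then checking that for every $\tilde{q}_i\in\mathcal{B}_{e}(q_i,\varepsilon)$ the translated witness $q+\overrightarrow{q_i\tilde{q}_i}$ lies in $P_i^{\tilde{q}_i}$ but still outside $\Omega$, so the whole ball of root points stays infeasible. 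You instead prove sequential closedness of $\Omega_i$ itself: a point $x=k+q_i$ of the limiting facility is the limit of the translates $k+q_i^n\in P_i^{q_i^n}\subseteq\Omega$, and closedness of the compact set $\Omega$ finishes. Both proofs hinge on the same translation bookkeeping $P_i^{q_i}=P_i^{\underline{0}}+q_i$, which the paper's setup licenses; your version is slightly leaner, using only that $\Omega$ is closed plus the metric (sequential) characterization of closedness, while the paper's separation argument delivers a quantitative byproduct, namely a uniform $\varepsilon$-ball of infeasible root points around any infeasible one --- which is precisely the openness of the complement being proved. For the product statement the two proofs coincide in substance: the paper cites that a finite product of closed sets is closed, and your coordinatewise-convergence argument is just an explicit verification of that fact.
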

\begin{proof}
To prove this statement, it is enough to show that $\mathbb{R}^2 \setminus \Omega_i =  \{q_i\in\mathbb{R}^2 : P_i^{q_i}\nsubseteq \Omega\}$ is open. Let $q_i\in \mathbb{R}^2 \setminus \Omega_i$. Then, there is a point $q\in P_i^{q_i}$ such that $q\not \in \Omega$. As $\mathbb{R}^2$ is regular with the usual topology, there exist two open sets $Z_1$ and $Z_2$ such that $q\in Z_1$, $\Omega\subseteq Z_2$ and $Z_1\cap Z_2 = \emptyset$. Let $\varepsilon> 0$ such that $\mathcal{B}_{e}(q,\varepsilon)\subseteq Z_1$, being $\mathcal{B}_{e}(q,\varepsilon)$  the open Euclidean ball centered at $q$ with radius $\varepsilon$. Now, note that every point $\tilde{q}_i\in \mathcal{B}_{e}(q_i,\varepsilon)$ verifies: $q+\overrightarrow{q_i\tilde{q}_i}\in P_i^{\tilde{q}_i}$; $q+\overrightarrow{q_i\tilde{q}_i}\notin \Omega$, since $q+\overrightarrow{q_i\tilde{q}_i}\in \mathcal{B}_{e}(q,\varepsilon)$ and $\mathcal{B}_{e}(q,\varepsilon)\cap \Omega = \emptyset$. Thus, $\tilde{q}_i\in \mathbb{R}^2\setminus \Omega_i$ for all $\tilde{q}_i\in \mathcal{B}_{e}(q_i,\varepsilon)$, and this means that $\mathbb{R}^2\setminus \Omega_i$ is open.

As $\Omega_i$ is closed for each $i\in\{1,...,\rho\}$,  the set $\Omega_1\times ... \times \Omega_{\rho}$ is closed in $\mathbb{R}^{2\rho}$ with the usual topology because it is a product of closed sets. \qed
\end{proof}

\begin{Lem}\label{lemCompact}
The set $\Gamma $ is compact.
\end{Lem}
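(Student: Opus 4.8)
The plan is to prove compactness via the Heine--Borel theorem, that is, to show that $\Gamma$ is closed and bounded in $\mathbb{R}^{2\rho}$. Boundedness is immediate: since $\Gamma \subseteq \Omega_1 \times \cdots \times \Omega_{\rho} \subseteq \Omega^{\rho}$ and $\Omega$ is compact (hence bounded), the product $\Omega^{\rho}$ is bounded and so is $\Gamma$. By Lemma \ref{omegaClosed} the set $\Omega_1 \times \cdots \times \Omega_{\rho}$ is closed, so it is in fact compact; it therefore suffices to prove that $\Gamma$ is a closed subset of it, since a closed subset of a compact set is compact.

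To this end I would write $\Gamma = (\Omega_1 \times \cdots \times \Omega_{\rho}) \cap N$, where
$$ N = \{(q_1,\dots,q_{\rho}) \in \mathbb{R}^{2\rho} : \text{int}(P_i^{q_i}) \cap \text{int}(P_j^{q_j}) = \emptyset \ \forall\, i \neq j\} $$
encodes the non-overlapping condition. As the product is already closed, it is enough to show that $N$ is closed, equivalently that its complement $N^c$ is open. The main obstacle, and really the only nontrivial point, is precisely this: proving that the condition ``some pair of interiors overlaps'' is an open condition on the vector of root points.

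For that I would argue as follows. Let $Q = (q_1,\dots,q_{\rho}) \in N^c$, so that $\text{int}(P_i^{q_i}) \cap \text{int}(P_j^{q_j}) \neq \emptyset$ for some $i \neq j$; pick a point $z$ in this intersection. Because both interiors are open, there is $\varepsilon > 0$ with $\mathcal{B}_{e}(z,\varepsilon) \subseteq \text{int}(P_i^{q_i})$ and $\mathcal{B}_{e}(z,\varepsilon) \subseteq \text{int}(P_j^{q_j})$. Using that each $P_k^{\tilde{q}_k}$ is the translate of $P_k^{q_k}$ by $\overrightarrow{q_k \tilde{q}_k}$, so that $\text{int}(P_k^{\tilde{q}_k}) = \text{int}(P_k^{q_k}) + \overrightarrow{q_k \tilde{q}_k}$, one checks that whenever $\|\tilde{q}_k - q_k\| < \varepsilon$ for $k \in \{i,j\}$ the point $z$ still lies in both $\text{int}(P_i^{\tilde{q}_i})$ and $\text{int}(P_j^{\tilde{q}_j})$: indeed $z \in \text{int}(P_i^{\tilde{q}_i})$ holds iff $z - \overrightarrow{q_i \tilde{q}_i} \in \text{int}(P_i^{q_i})$, which is guaranteed by $z - \overrightarrow{q_i \tilde{q}_i} \in \mathcal{B}_{e}(z,\varepsilon)$. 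Hence a whole Euclidean neighbourhood of $Q$ lies in $N^c$, proving $N^c$ open.

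Combining the two facts, $\Gamma = (\Omega_1 \times \cdots \times \Omega_{\rho}) \cap N$ is the intersection of two closed sets and thus closed; being a closed subset of the compact product $\Omega_1 \times \cdots \times \Omega_{\rho}$, it is compact. The only delicate ingredient is the openness of the overlap condition, which reduces to the elementary observation that translating the two overlapping open sets by sufficiently small vectors cannot destroy a fixed common interior point.
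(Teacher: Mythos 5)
Your proposal is correct and takes essentially the same approach as the paper: boundedness is immediate, closedness is split into the product constraint (handled by Lemma \ref{omegaClosed}) and the non-overlap constraint, and the latter is shown to be closed by observing that a fixed common interior point survives all sufficiently small translations of the two overlapping facilities. The only cosmetic differences are that you phrase this as $\Gamma = (\Omega_1\times\cdots\times\Omega_{\rho})\cap N$ with $N$ closed rather than writing $\mathbb{R}^{2\rho}\setminus\Gamma$ as a union of two open sets, and that you work with Euclidean balls of radius $\varepsilon$ where the paper uses $\ell_{\infty}$ balls of radius $\varepsilon/2$.
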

\begin{proof}
Actually, we have to prove that the set $\Gamma\subseteq \mathbb{R}^{2\rho}$ is closed, since the fact that $\Gamma$ is bounded is clear. To do this, we prove that $ \mathbb{R}^{2\rho}\setminus \Gamma$ is open. Note that $\mathbb{R}^{2\rho} \setminus \Gamma = \Gamma^{\complement}_1 \cup \Gamma^{\complement}_2$ being $\Gamma^{\complement}_1= \{(q_1,...,q_{\rho})\in \mathbb{R}^{2 \rho} : P_i^{q_i}\nsubseteq \Omega \text{ for some } i\in\{1,...,\rho\}\}$ and $\Gamma^{\complement}_2 = \{(q_1,...,q_{\rho})\in \mathbb{R}^{2 \rho} : \text{int}(P_i^{q_i})\cap \text{int}(P_j^{q_j})\not= \emptyset  \text{ for some } i,j\in\{1,...,\rho\}\text{ with } i\not= j\}$. So, if the sets $\Gamma^{\complement}_1$ and $\Gamma^{\complement}_2$ are open, then the set $\mathbb{R}^{2\rho} \setminus \Gamma$ will be open.

Observe that $\Gamma^{\complement}_1 = \mathbb{R}^{2\rho} \setminus \{\Omega_1 \times ... \times \Omega_{\rho}\}$. Thus, as the set $\Omega_1 \times ... \times \Omega_{\rho}$ is closed by Lemma \ref{omegaClosed}, the set $\Gamma^{\complement}_1$ is open. Now consider a point $(q_1,...,q_{\rho})\in \Gamma^{\complement}_2$. Then there exist $i,j\in\{1,...,\rho\}$ with $i\not= j$ such that $\text{int}(P_i^{q_i})\cap \text{int}(P_j^{q_j}) \not= \emptyset$. Let $q\in \text{int}(P_i^{q_i})\cap \text{int}(P_j^{q_j})$ and let $\mathcal{B}_{\infty}(q,\varepsilon)$  be any open ball  centered at $q$ with radius $\varepsilon> 0$, with respect to the maximum metric $\ell_{\infty}$, such that $\mathcal{B}_{\infty}(q,\varepsilon)\subseteq \text{int}(P_i^{q_i})\cap \text{int}(P_j^{q_j})$. Then, it can be proven that $\mathcal{B}_{\infty}((q_1,...,q_{\rho}),\varepsilon / 2) \subseteq \Gamma^{\complement}_2$, which implies that the set $\Gamma^{\complement}_2$ is open. To see the inclusion above, note that the  ball $\mathcal{B}_{\infty}((q_i,q_j),\varepsilon / 2)$ of $\mathbb{R}^4$ is contained in $\mathcal{B}_{\infty}((q_1,...,q_{\rho}),\varepsilon / 2)$, and that $\text{int}(P_i^{\tilde{q}_i})\cap \text{int}(P_j^{\tilde{q}_j}) \not= \emptyset$ for all $(\tilde{q}_i,\tilde{q}_j)\in \mathcal{B}_{\infty}((q_i,q_j),\varepsilon / 2)$.\qed
\end{proof}

\begin{Thm}\label{existenceThm}
There exists an optimal solution for problem BL.
\end{Thm}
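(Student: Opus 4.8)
The plan is to invoke the Weierstrass extreme value theorem: since $\Gamma$ is compact by Lemma \ref{lemCompact} and $\mathcal{F}$ takes finite nonnegative values on $\Gamma$, it suffices to prove that $\mathcal{F}:\Gamma\to[0,+\infty)$ is continuous (lower semicontinuity would already do). Because $I_i$, $C_i$ and $L$ are continuous, I would reduce the continuity of $\mathcal{F}$ to the continuity of the three ``mass'' maps $Q\mapsto\int_{P_i^{q_i}}B\,dq$, $\ Q\mapsto\int_{P_1^{q_1}\cup\cdots\cup P_\rho^{q_\rho}}D\,dq$ and $Q\mapsto\int_{\widehat{A}_i(Q)}D\,dq$, and then assemble them by composition and summation.

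For the installation and lost-demand terms I would use that translation acts continuously in $L^1$. Fixing $i$ and $q_i^n\to q_i$ and writing $P_i^{q_i^n}$ as the translate of $P_i^{q_i}$ by $q_i^n-q_i$, the Lebesgue measure of the symmetric difference $P_i^{q_i^n}\triangle P_i^{q_i}$ tends to $0$; hence $|\int_{P_i^{q_i^n}}B-\int_{P_i^{q_i}}B|\le\int_{P_i^{q_i^n}\triangle P_i^{q_i}}|B|\to0$ by absolute continuity of the integral of $B$ (note $B\in\mathcal{L}^2(\Omega)$ with $\Omega$ compact, so $B$ is integrable). The same estimate with $D$ in place of $B$ yields continuity of each $Q\mapsto\int_{P_i^{q_i}}D\,dq$ and, summing and using that on $\Gamma$ the facilities overlap only on their (Lebesgue-null) boundaries, continuity of the lost-demand mass.

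The delicate term is the congestion mass $m_i(Q)=\int_{\widehat{A}_i(Q)}D\,dq$, and this is the main obstacle, since the integration region is defined implicitly and both it and the ambient domain $\Omega(Q)$ move with $Q$. Here I would argue by dominated convergence: for $Q^n\to Q$ in $\Gamma$ I claim $\mathbf{1}_{\widehat{A}_i(Q^n)}\to\mathbf{1}_{\widehat{A}_i(Q)}$ pointwise $D$-almost everywhere, and then $m_i(Q^n)\to m_i(Q)$ follows because all integrands are dominated by the integrable function $D$. To obtain the $D$-a.e.\ convergence, set $g_j^Q(q)=a_j+u_j(q,q_j)$; since each $u_j$ is continuous on the compact set $\Omega\times\Omega$ it is uniformly continuous there, so $g_j^{Q^n}\to g_j^Q$ uniformly on $\Omega$. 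I would then exclude the $D$-null set formed by the union of the tie sets (\ref{Dnegligible}) (null by hypothesis) and the facility boundaries $\bigcup_k\partial P_k^{q_k}$ (Lebesgue-null for the shapes of interest, e.g.\ convex bodies or polygons, hence $D$-null). For $q$ outside this set: if $q\in\text{int}(P_k^{q_k})$ for some $k$, then $q\notin\widehat{A}_i(Q)$, and since $q_k^n\to q_k$ and the interior is open one gets $q\in\text{int}(P_k^{q_k^n})$ for large $n$, so $q\notin\widehat{A}_i(Q^n)$; otherwise $q$ lies strictly outside every closed facility, so $q\in\Omega(Q)$ and $q\in\Omega(Q^n)$ for large $n$, and as $q$ avoids the tie sets it has a unique strict minimizer $\ell$ of $(g_j^Q(q))_j$ with positive gap, which by the uniform convergence persists as the unique strict minimizer of $(g_j^{Q^n}(q))_j$ for large $n$; in either case $\mathbf{1}_{\widehat{A}_i(Q^n)}(q)\to\mathbf{1}_{\widehat{A}_i(Q)}(q)$.

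With the three mass maps continuous, $\mathcal{F}$ is continuous on the compact set $\Gamma$ and hence attains its minimum at some $Q^\ast$; together with the best reply $\widehat{A}(Q^\ast)$ furnished by Theorem \ref{bestReply}, the pair $(Q^\ast,\widehat{A}(Q^\ast))$ is an optimal solution of BL. The subtle point to get right is precisely the congestion term: one must simultaneously control the moving partition cells and the moving domain $\Omega(Q)$, and the standing hypothesis that the tie sets (\ref{Dnegligible}) are $D$-negligible is exactly what prevents mass from concentrating on the indifference boundaries and thereby guarantees the almost-everywhere convergence of the indicators.
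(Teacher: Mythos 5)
Your proposal is correct in outline and reaches the same endpoint (Weierstrass on the compact set $\Gamma$ of Lemma \ref{lemCompact}), but the route you take for the continuity of $\mathcal{F}$ is genuinely different from the paper's. For the installation and lost-demand terms the paper constructs inner and outer sandwich facilities $P_i^-(\tilde{Q},\varepsilon_n)\subseteq P_i^{q_i}\subseteq P_i^+(\tilde{Q},\varepsilon_n)$ and invokes continuity from above/below of Lebesgue measure together with absolute continuity of the densities, whereas you get the same conclusion in one line from translation-continuity in $L^1$ applied to $\mathbf{1}_{P_i}$ plus absolute continuity of $\int B$ and $\int D$; your argument is shorter and equally rigorous. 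For the delicate congestion term the paper first sets up a topological apparatus (endowing $\widehat{A}(\Gamma)$ with the final topology and arguing $\widehat{A}$ is a homeomorphism --- a step that carries essentially no content, since any surjection onto its image with the final topology is automatically a quotient map) and then proves a Claim via sandwich sets $\widehat{A}_i^{\pm}(\tilde{Q},\varepsilon_n)$ with the uniform margin $3\xi_n$ coming from uniform continuity of $u$; you instead prove sequential continuity of $Q\mapsto\int_{\widehat{A}_i(Q)}D$ directly by dominated convergence, showing $\mathbf{1}_{\widehat{A}_i(Q^n)}\to\mathbf{1}_{\widehat{A}_i(Q)}$ $D$-a.e.\ after excluding the tie sets (\ref{Dnegligible}) and the facility boundaries, using the same uniform convergence of $(q,Q)\mapsto a_j+u_j(q,q_j)$ that drives the paper's $\xi_n$. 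Both mechanisms exploit exactly the same two ingredients (uniform continuity of the utilities and the $D$-negligibility of the indifference sets), so the proofs are morally equivalent; what the paper's quantitative sandwich construction buys is reusability --- the sets $P_i^{\pm}$ and $\widehat{\mathbb{A}}_i^{\pm}$ reappear verbatim in the convergence Lemmas \ref{convergenceI} and \ref{convergenceC}, where uniformity in $Q$ is needed --- while your qualitative argument buys brevity and transparency for the existence theorem alone. One caveat you should make explicit: your step discarding $\bigcup_k\partial P_k^{q_k}$ as Lebesgue-null does not follow from the paper's standing definition of a dimensional facility (a compact closure of a nonempty open connected set can have boundary of positive measure), and you rightly hedge with ``for the shapes of interest''; note, however, that the paper's own proof tacitly relies on the same hypothesis, since its asserted identities $\bigcup_n\widehat{A}_i^-(\tilde{Q},\varepsilon_n)=\widehat{A}_i(\tilde{Q})=\bigcap_n\widehat{A}_i^+(\tilde{Q},\varepsilon_n)$ up to $m$-negligible sets fail precisely on fat facility boundaries, so this is a shared implicit assumption rather than a gap particular to your argument.
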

\begin{proof}
Using the function $\widehat{A}_i$ defined as above for each $i\in \{1,...,\rho\}$ and Lemma \ref{lemCompact},  problem BL consists in minimising a continuous function $\mathcal{F}$ on a compact set $\Gamma$.
We will get the result using Weierstrass theorem.

To prove that $\mathcal{F}$ is continuous on $\Gamma$, it is enough to prove that each one of its summands is continuous on $\Gamma$. We give full details of the proof for the functions
$\mathscr{C}_i (Q)= C_i \biggl( \int_{\widehat{A}_i (Q)}  D(q) dq \biggr)$
and we only outline the proof for the remaining functions
$\mathscr{I}_i (Q)=I_i \biggl( \int_{P_i^{q_i}} B(q) dq \biggr)$  and
$\mathscr{L} (Q)=L\biggl( \int_{P_1^{q_1}\cup ... \cup P_{\rho}^{q_{\rho}}} D(q) dq\biggr)$,
for any $i\in \{1,...,\rho\}$, since the proofs are similar.

Take $i\in\{1,...,\rho\}$. Let $\mu_i: \widehat{A}(Q)\in \widehat{A}(\Gamma) \to \int_{\widehat{A}_i(Q)}  D(q) dq\in [0,1]\subseteq \mathbb{R}$, i.e., $\mu_i(\widehat{A}(Q))$ is the measure, with respect to the $D$ density, of the $i$-th component of $\widehat{A}(Q)$. Note that $\mathscr{C}_i (Q) = C_i(\mu_i(\widehat{A}(Q)))$ for all $Q\in \Gamma$. So, as $C_i$ is continuous, if we prove that $\widehat{A}$ and $\mu_i$ are continuous then $\mathscr{C}_i$ will be  continuous.

Consider the application between topological spaces $\widehat{A}:\Gamma\to  \widehat{A}(\Gamma)$, where $\Gamma$ is endowed with the relative topology of $\mathbb{R}^{2\rho}$ and  $\widehat{A}(\Gamma)$ with the final topology. As $\widehat{A}(\Gamma)$ is endowed with the final topology, $\widehat{A}$ is continuous as application between topological spaces. Moreover, $\widehat{A}$ is  a homeomorphism. Indeed, observe that $\widehat{A}(Q)$ is different for each $Q\in \Gamma$, since $\widehat{A}$ partitions a different set $\Omega(Q)$ for each $Q\in \Gamma$. Then, $\widehat{A}$ is injective and also bijective, since $\widehat{A}$ is clearly surjective. Thus, since the image space, $\widehat{A}(\Gamma)$, is endowed with the final topology, $\widehat{A}$ is a homeomorphism.


To prove that $\mu_i$ is continuous we have to show that $\mu_i^{-1}(Z)$ is open in $\widehat{A}(\Gamma)$ for any open set $Z$ in $\mathbb{R}$, where $\widehat{A}(\Gamma)$ is endowed with the final topology indicated above. Since the open Euclidean balls constitute a base of the usual topology, it is enough to consider open Euclidean balls, i.e., intervals $Z=(\alpha,\beta)$ with $\alpha,\beta\in \mathbb{R}$ and $\alpha < \beta$.

For any $Z=(\alpha,\beta)$ as above, we have that
$$\mu_i^{-1} (Z) = \left\lbrace \widehat{A}(Q) \in \widehat{A}(\Gamma) : \int_{\widehat{A}_i(Q)} D(q)dq\in (\alpha,\beta)\right\rbrace.$$
Let $\widehat{A}(\tilde{Q})\in \mu_i^{-1} (Z)$, where $\tilde{Q}=(\tilde{q}_1,...,\tilde{q}_{\rho})\in \Gamma$. Then, $\int_{\widehat{A}_i(\tilde{Q})} D(q)dq=\varsigma \in (\alpha,\beta)$. Next, we will prove that there exists $\epsilon>0$ such that $\widehat{A}(\tilde{Q}) \in \widehat{A}(\mathcal{B}_{\infty}(\tilde{Q}, \epsilon)\cap \Gamma) \subseteq \mu_i^{-1} (Z)$. That result implies that $\mu_i^{-1} (Z)$ is open, which will complete the proof. Note that $\mathcal{B}_{\infty}(\tilde{Q}, \epsilon)\cap \Gamma$ is the relative open ball $\mathcal{B}_{\infty}(\tilde{Q}, \epsilon)$ of $\mathbb{R}^{2\rho}$ in $\Gamma$, so it is open in $\Gamma$ endowed with the relative topology of $\mathbb{R}^{2\rho}$. Hence,  $\widehat{A}(\mathcal{B}_{\infty}(\tilde{Q}, \epsilon)\cap \Gamma)$ is also open in $\widehat{A}(\Gamma)$ endowed with the final topology mentioned above, because of $\widehat{A}$ is a homeomorphism. Therefore, $\widehat{A}(\mathcal{B}_{\infty}(\tilde{Q}, \epsilon)\cap \Gamma)$  is an open neighbourhood of $\widehat{A}(\tilde{Q})$ contained in $\mu_i^{-1} (Z)$, which means that   $\mu_i^{-1} (Z)$ is open.

\noindent \textbf{Claim} \,\emph{There exists $\epsilon>0$ such that $\widehat{A}(\tilde{Q}) \in \widehat{A}(\mathcal{B}_{\infty}(\tilde{Q}, \epsilon)\cap \Gamma) \subseteq \mu_i^{-1} (Z)$.}

\noindent \emph{Proof of the Claim} Let $\varepsilon > 0$ be small enough. For each $\varepsilon_n = \varepsilon/n$ with $n\in \mathbb{N}$, we define the following sets: $\Omega^-(\tilde{Q},\varepsilon_n) = \{q\in \Omega : q\notin \text{int}(P_1^{\breve{q}_1})\cup ... \cup \text{int}(P_{\rho}^{\breve{q}_{\rho}})  \text{ for all } \breve{Q}=(\breve{q}_1,...,\breve{q}_{\rho})\in \mathcal{B}_{\infty}(\tilde{Q},\varepsilon_n)\}$ and $\Omega^+(\tilde{Q},\varepsilon_n) = \{q\in \Omega : q\in \Omega\setminus\{\text{int}(P_1^{\breve{q}_1})\cup ... \cup \text{int}(P_{\rho}^{\breve{q}_{\rho}}) \}  \text{ for some } \breve{Q}=(\breve{q}_1,...,\breve{q}_{\rho})\in \mathcal{B}_{\infty}(\tilde{Q},\varepsilon_n)\}.$
It is not difficult to see that the sets $\Omega^-(\tilde{Q},\varepsilon_n)$ and $\Omega^+(\tilde{Q},\varepsilon_n)$ are measurable with respect to the Lebesgue measure $m$. Now, consider the sets
$\widehat{A}_i^-(\tilde{Q},\varepsilon_n) = \{q\in \Omega^-(\tilde{Q},\varepsilon_n) :  a_i + u(q,P_i^{\tilde{q}_i}) + 3\xi_n < a_j + u(u,P_j^{\tilde{q}_j}) \text{ for all } j\in\{1,...,\rho\} \text{ with } j\not= i\}$
and
$\widehat{A}_i^+(\tilde{Q},\varepsilon_n) = \{q\in \Omega^+(\tilde{Q},\varepsilon_n) :  a_i + u(q,P_i^{\tilde{q}_i})  < a_j + u(q,P_j^{\tilde{q}_j}) + 3\xi_n \text{ for all } j\in\{1,...,\rho\} \text{ with } j\not= i\}$,
being
$\xi_n = \max \left\lbrace \left|u(q,P_j^{\tilde{q}_j})-u(q,P_j^{\breve{q}_j})\right| : q\in \Omega, j\in \{1,...,\rho\}, \breve{q}_j\in \text{cl}(\mathcal{B}_{\infty}(\tilde{q}_j,\varepsilon_n))\right\rbrace$,
which are also Lebesgue measurable sets.

Note that $ \widehat{A}_i^-(\tilde{Q},\varepsilon_n) \subseteq \widehat{A}_i^-(\tilde{Q},\varepsilon_{n+1}) \subseteq \widehat{A}_i(\tilde{Q}) $ since
$ \Omega^-(\tilde{Q},\varepsilon_n) \subseteq \Omega^-(\tilde{Q},\varepsilon_{n+1}) \subseteq \Omega(\tilde{Q})$. Analogously,
$ \widehat{A}_i(\tilde{Q})  \subseteq  \widehat{A}_i^+(\tilde{Q},\varepsilon_{n+1}) \subseteq \widehat{A}_i^+(\tilde{Q},\varepsilon_n) $
since
$ \Omega(\tilde{Q}) \subseteq \Omega^+(\tilde{Q},\varepsilon_{n+1}) \subseteq \Omega^+(\tilde{Q},\varepsilon_{n})$.
Indeed,
$$\bigcup_{n=1}^{\infty}  \widehat{A}_i^-(\tilde{Q},\varepsilon_n) = \widehat{A}_i(\tilde{Q}) = \bigcap_{n=1}^{\infty}  \widehat{A}_i^+(\tilde{Q},\varepsilon_n)$$
up to $m$-negligible sets. Thus, applying the continuity  properties of the Lebesgue measure, it follows that
\begin{small}
\begin{equation}\label{lebesgueCont}
\lim_{n\to \infty}m\left(\widehat{A}_i^-(\tilde{Q},\varepsilon_n)\right) = m\left(\bigcup_{n=1}^{\infty}  \widehat{A}_i^-(\tilde{Q},\varepsilon_n)\right) = m\left(\widehat{A}_i(\tilde{Q}) \right) = m\left(\bigcap_{n=1}^{\infty}  \widehat{A}_i^+(\tilde{Q},\varepsilon_n)\right)= \lim_{n\to \infty} m\left(\widehat{A}_i^+(\tilde{Q},\varepsilon_n)\right).
\end{equation}
\end{small}

Recall that as $D$ is an absolutely continuous measure, for every $\phi >0$ there exists $\eta>0$ such that $\int_{Z} D(q)dq < \phi$ for every Lebesgue measurable set $Z$ for which $m(Z)<\eta$. Let $\phi >0$ be such that $(\varsigma-\phi,\varsigma+\phi)\subseteq (\alpha,\beta)$. Due to (\ref{lebesgueCont}), there always exists $n_0\in \mathbb{N}$ such that $m\left(\widehat{A}_i(\tilde{Q}) \setminus \widehat{A}_i^-(\tilde{Q},\varepsilon_{n_0})\right) < \eta$ and $m\left(\widehat{A}_i^+(\tilde{Q},\varepsilon_{n_0})\setminus \widehat{A}_i(\tilde{Q}) \right) < \eta$
for all $\eta >0$. Therefore, we can find a $n_0\in \mathbb{N}$ for a $\eta >0$ which makes
$\int_{\widehat{A}_i(\tilde{Q}) \setminus \widehat{A}_i^-(\tilde{Q},\varepsilon_{n_0})} D(q)dq < \phi$ and $\int_{\widehat{A}_i^+(\tilde{Q},\varepsilon_{n_0})\setminus \widehat{A}_i(\tilde{Q})} D(q)dq < \phi$, or equivalently,
$\int_{\widehat{A}_i^-(\tilde{Q},\varepsilon_{n_0})} D(q)dq > \varsigma - \phi$ and $\int_{\widehat{A}_i^+(\tilde{Q},\varepsilon_{n_0})} D(q)dq < \varsigma + \phi$.
Take $n_0\in \mathbb{N}$ for which the above is true.

Now, let $\breve{Q}=(\breve{q}_1,...,\breve{q}_{\rho})\in \mathcal{B}_{\infty}(\tilde{Q},\varepsilon_{n_0})\cap \Gamma$. Then,
$\left| u(q,P_j^{\tilde{q}_j}) - u(q,P_j^{\breve{q}_j})\right| \leq \xi_{n_0}$
for all $q\in \Omega$ and $j\in \{1,...,\rho\}$. Therefore,
$\left| \left( u(q,P_i^{\tilde{q}_i})-u(q,P_j^{\tilde{q}_j}) \right) - \left( u(q,P_i^{\breve{q}_i})-u(q,P_j^{\breve{q}_j})\right)\right| \leq 2\xi_{n_0} < 3\xi_{n_0}$
for all $q\in \Omega$ and $j\in \{1,...,\rho\}$. Note that the above inequality together with the fact that
$\Omega^-(\tilde{Q},\varepsilon_{n_0}) \subseteq \Omega(\breve{Q}) \subseteq \Omega^+(\tilde{Q},\varepsilon_{n_0})$
imply that:
$$ \widehat{A}_i^-(\tilde{Q},\varepsilon_{n_0}) \subseteq \widehat{A}_i(\breve{Q}) \subseteq \widehat{A}_i^+(\tilde{Q},\varepsilon_{n_0}) .$$
Thus,
$$\int_{ \widehat{A}_i(\breve{Q})} D(q)dq \in (\varsigma - \phi, \varsigma + \phi)\subseteq (\alpha,\beta). $$
Hence, it is enough to take $\epsilon = \varepsilon_{n_0}$ to complete the proof of the Claim.

Reasoning in a similar way, it can be proven that $\mathscr{I}_1,...,\mathscr{I}_{\rho}$  and $\mathscr{L}$ are also continuous functions. To do this, for $\tilde{Q}\in \Gamma$ suitably taken, use the sets
$ P_i^-(\tilde{Q},\varepsilon_n) = \{q\in \Omega : q\in P_i^{\breve{q}_i} \text{ for all } \breve{Q}=(\breve{q}_1,...,\breve{q}_{\rho})\in \mathcal{B}_{\infty}(\tilde{Q},\varepsilon_n)\}$
and
$ P_i^+(\tilde{Q},\varepsilon_n) = \{q\in \Omega : q\in P_i^{\breve{q}_i} \text{ for some } \breve{Q}=(\breve{q}_1,...,\breve{q}_{\rho})\in \mathcal{B}_{\infty}(\tilde{Q},\varepsilon_n)\}$. \qed
\end{proof}

Theorem \ref{existenceThm} finally proves that problem BL is well-defined and gives sufficient conditions for the existence of optimal solutions.

\section{A convergent discrete approximation scheme}

The previous section states that problem BL is well-defined. However, in spite of being well-defined, optimizing problem BL is a very difficult task since it amounts to minimize with a best reply function over the partitions of $\Omega$ as a constraint defining the feasible domain. To overcome that inconvenience we propose a discrete approximation of problem BL. This approximation provides good solutions for the original problem. Since $\Omega$ is bounded by hypothesis, we can easily find a rectangle of $\mathbb{R}^2$ containing $\Omega$. Consider a grid $G$ over that rectangle, and thus over $\Omega$. Let $\bm{G}$ be the set of cells of the grid $G$. We denote by $(k,l)$ a cell of $\bm{G}$, where $k$ indexes the horizontal position of the cell in the grid and $l$ the vertical one. Now, consider the sets
$$\bm{\Omega} = \{(k,l)\in \bm{G} : \text{int}((k,l))\cap \Omega \not= \emptyset\}  $$
and
$$\OOmega = \bigcup_{(k,l)\in \bm{\Omega}} (k,l). $$
Clearly $\Omega \subseteq \OOmega$ and we want $\OOmega$ to be as similar to $\Omega$ as possible. Indeed, $\OOmega$ is the outer approximation of $\Omega$ given by the cells of the grid $G$ (see Fig. \ref{omegaKL}). The finer the grid, the better the approximation. Note that, for an element of the problem denoted by a letter, we use that letter in bold to represent the discrete counterpart of the element. Moreover, with the hollow fonts we represent the approximation of that element induced by its discrete counterpart, e.g., $\OOmega$ is the approximation of $\Omega$ induced by $\bm{\Omega}$. Onwards, we keep this meaning for the notation in bold and hollow fonts.

\begin{figure}[H]
\centering
\subfigure[$\Omega$]{\includegraphics[scale=0.4]{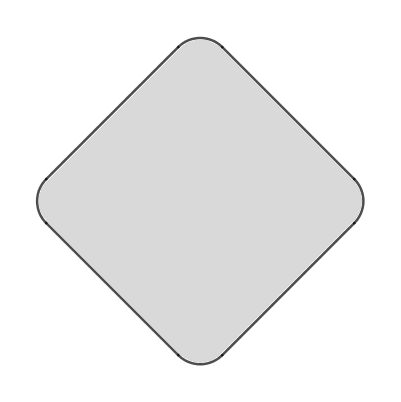}}
\hspace{1.5cm}
\subfigure[$\bm{\Omega}$]{\includegraphics[scale=0.4]{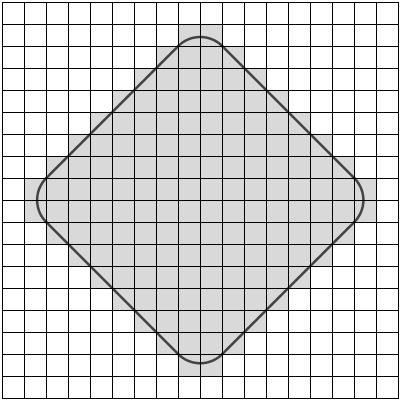}}
\hspace{1.5cm}
\subfigure[$\OOmega$]{\includegraphics[scale=0.4]{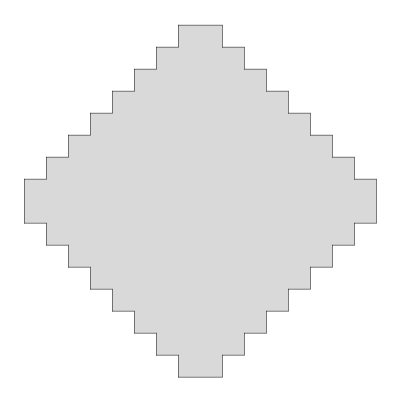}}
\caption{Sets $\bm{\Omega}$ and $\OOmega$ for an example of a Borel set $\Omega$ and a regular grid $G$} \label{omegaKL}
\end{figure}

Before to describe the discretization of problem BL, we introduce the following notation and define some elements involved in the discretization for each $(k,l)\in \bm{\Omega}$ and $i\in \{1,...,\rho\}$:
\begin{enumerate}[-]
\item $q_{(k,l)}$: is the center of the cell $(k,l)$ (if $\{(x^-,y^-),(x^+,y^-),(x^-,y^+),(x^+,y^+)\}\subseteq \mathbb{R}^2$ are the extreme points of the cell $(k,l)$, then the center of $(k,l)$ is $((x^-+x^+)/2,(y^-+y^+)/2)\in \mathbb{R}^2$).

\item $\bm{P}_i^{(k,l)}$: is the subset of cells of $\bm{\Omega}$ defined by
$$\bm{P}_i^{(k,l)} = \{(r,s)\in\bm{\Omega} : \text{int}((r,s))\cap \text{int}(P_i^{q_{(k,l)}})\not= \emptyset\}.$$


\item $\mathbb{P}_i^{(k,l)}$: is the set defined by
$$\mathbb{P}_i^{(k,l)} = \bigcup_{(r,s)\in \bm{P}_i^{(k,l)}} (r,s).$$
The set $\mathbb{P}_i^{(k,l)}$ is the approximation of the facility $P_i^{q_{(k,l)}}$ induced by the cells of $\bm{P}_i^{(k,l)}$ (the discretization scheme is the same that the one shown in Fig. \ref{omegaKL}). We refer to $\mathbb{P}_i^{(k,l)}$ as \emph{cell facility}. The finer the grid, the better the approximation.


\item $\bm{\Omega}_i$: is the subset of cells of $\bm{\Omega}$ defined by
$$\bm{\Omega}_i = \{(k,l)\in\bm{\Omega} : P_i^{q_{(k,l)}} \subseteq \Omega\}.$$

\item $\OOmega_i$: is the set defined by
$$\OOmega_i = \bigcup_{(k,l)\in \bm{\Omega}_i} (k,l).$$
The set $\OOmega_i$ is the approximation of $\Omega_i$ induced by $\bm{\Omega}_i$.
\end{enumerate}

The discretized version of problem BL (DBL) is to locate $\rho$ facilities $P_1,...,P_{\rho}$ in $\OOmega$  and to find their demand regions $A_1,...,A_{\rho}$ optimizing the costs as in the original continuous problem BL. To address this discretized problem we need to transform the original one making the following assumptions:
\begin{enumerate}[\text{Assumption} 1:]
\item The root points $p_1,...,p_{\rho}$ of the dimensional facilities $P_1,...,P_{\rho}$ can only be located at the centers of the cells in $\bm{\Omega}$. Then, a suitable solution of problem DBL is determined by a $\rho$-tuple $\bm{Q}=((k_1,l_1),...,(k_{\rho},l_{\rho}))\subseteq \bm{\Omega}^{\rho}$ where $(k_i,l_i)$ is the cell in whose center $q_{(k_i,l_i)}$ is located the root point of the dimensional facility $P_i$, for each $i\in \{1,...,\rho\}$. Therefore, in the discretized version of problem BL, dimensional facilities can only be placed in a finite number of locations.

\item We impose on the cell facilities some conditions induced by the corresponding ones applicable to the sets $P_1,...,P_{\rho}$ in problem BL.
\begin{enumerate}[\text{Assumption 2.}1:]

\item The interior of the cell facilities can not intersect between them. So, if we denote by $\bm{\Gamma}\subseteq \bm{\Omega}^{\rho}$ the set of suitable solutions of problem DBL, $\bm{Q}=((k_1,l_1),...,(k_{\rho},l_{\rho}))\in \bm{\Gamma}$ iff  $P_i^{q_{(k_i,l_i)}} \subseteq \Omega$ for all $i\in \{1,...,\rho\}$ and $\text{int}(\mathbb{P}_i^{(k_i,l_i)})\cap \text{int}(\mathbb{P}_j^{(k_j,l_j)}) = \emptyset$ for all $i,j\in \{1,...,\rho\}$ with $i\not= j$. Equivalently, using the sets defined above:
$$ \begin{array}{rl} \bm{\Gamma}=\{((k_1,l_1),...,(k_{\rho},l_{\rho}))\in \bm{\Omega}_1\times ... \times \bm{\Omega}_{\rho} : & \bm{P}_i^{(k_i,l_i)}\cap \bm{P}_j^{(k_j,l_j)} = \emptyset, \\ &\forall i,j\in \{1,...,\rho\},i\not= j\}. \end{array}$$
$$ .$$

\item Given a suitable solution $\bm{Q}=((k_1,l_1),...,(k_{\rho},l_{\rho}))\in \bm{\Gamma}$, instead of finding the optimal partition of $\OOmega\setminus \{\text{int}(P_1^{q_{(k_1,l_1)}})\cup ... \cup \text{int}(P_{\rho}^{q_{(k_{\rho},l_{\rho})}})\}$, we have to find the optimal partition of $\OOmega(\bm{Q}) = \OOmega\setminus \{\text{int}(\mathbb{P}_1^{(k_1,l_1)})\cup ... \cup \text{int}(\mathbb{P}_{\rho}^{(k_{\rho},l_{\rho})})\}$.
Note that $\OOmega(\bm{Q})$ is, up to $D$-negligible sets, the union of the cells of the set  $\bm{\Omega}(\bm{Q})$ defined as
$$\bm{\Omega}(\bm{Q})=\{(r,s)\in \bm{\Omega} : (r,s)\notin \bm{P}^{(k_i,l_i)}_i, \forall i\in\{1,...,\rho\}\}.$$

\item The installation and lost demand costs are computed now over the region occupied by the cell facilities.
\end{enumerate}


\item Given a suitable solution $\bm{Q}=((k_1,l_1),...,(k_{\rho},l_{\rho}))\in \bm{\Gamma}$, any partition $A(\bm{Q})=(A_1(\bm{Q}),...,A_{\rho}(\bm{Q}))$ of the set $\OOmega(\bm{Q})$ must satisfy that each region $A_i(\bm{Q})$ is the union of a finite number of cells of $\bm{\Omega}(\bm{Q})$, for all $i\in\{1,...,\rho\}$. We denote by $\bm{A}_i(\bm{Q})$ the subset of $\bm{\Omega}(\bm{Q})$ such that
$$A_i(\bm{Q}) = \bigcup_{(r,s)\in \bm{A}_i(\bm{Q})} (r,s),$$
for each $i\in\{1,...,\rho\}$. The partition $\bm{A}(\bm{Q})=(\bm{A}_1(\bm{Q}),...,\bm{A}_{\rho}(\bm{Q}))$ of $\bm{\Omega}(\bm{Q})$ assigns the demand cells in $\bm{\Omega}(\bm{Q})$ among the facilities. Note that, for this element of the problem, $A(\bm{Q})=\mathbb{A}(\bm{Q})$.

\item Suppose located the dimensional facilities $P_1,...,P_{\rho}$  according to $\bm{Q}=((k_1,l_1),...,(k_{\rho},l_{\rho}))\in \bm{\Gamma}$. The utility $u_G$ obtained from a point $q\in \OOmega(\bm{Q})$ with respect to the dimensional facility $P^{q_{(k_i,l_i)}}_i$ is now induced by the grid $G$ as:
\begin{equation*} \label{dcons2}
u_{G} (q, P_i^{q_{(k_i,l_i)}})= u (q_{(r,s)}, P_i^{q_{(k_i,l_i)}}),
\end{equation*}
being $q_{(r,s)}$ the center of the cell $(r,s)\in \bm{\Omega}(\bm{Q})$ to which the point $q$ belongs to, for each $i\in\{1,...,\rho\}$. Thus, in the discretized problem, all the points in a cell have the same utility, namely the utility of the center of that cell in the non-discretized problem. To ensure $u_G$ is well-defined, if $\{(x^-,y^-),(x^+,y^-),(x^-,y^+),(x^+,y^+)\}\subseteq \mathbb{R}^2$ are the extreme points of the cell $(r,s)$, in terms of membershipness, we consider $(r,s)$ as $[x^-,x^+)\times[y^-,y^+)\subseteq \mathbb{R}^2$ (this avoid that $q$ may belong to more than one cell). Note that if the grid $G$ is fine enough, $u_G(q, P_i^{q_{(k_i,l_i)}})$ gives a good approximation of $u(q,P_i^{q_{(k_i,l_i)}})$.


\item We assume that the cost functions $I_1,...,I_{\rho},C_1,...,C_{\rho},L$ are non-decreasing, continuous, with image on $[0,+\infty)$ and piecewise linear. We denote by $I_1^{\text{PL}},...,I_{\rho}^{\text{PL}},C_1^{\text{PL}},...,C_{\rho}^{\text{PL}},L^{\text{PL}}$ these cost functions in problem DBL to emphasize that they are piecewise linear. Note that the piecewise linearity assumption is not a big loss of generality. Indeed, taking a partition of the interval $[0,1]$ and evaluating the congestion cost function $C_i$ of problem BL at the points of the partition, we can build, by linear interpolation, a piecewise linear congestion cost function $C_i^{\text{PL}}$ that approximates $C_i$, for any $i\in\{1,...,\rho\}$. The finer the partition, the better the approximation. The same applies for $I_1,...,I_{\rho}$ and $L$.

\end{enumerate}

Fig. \ref{DULexample} shows, as an illustrative example, the discretized version of the Example 4.1 from \cite{optPartition} considering a regular grid $G$ over $\Omega$  with $25\times 25$ cells (note that, as $\Omega$ is the unit square, $\bm{G} = \bm{\Omega}$).

\begin{figure}[H]
\centering
\includegraphics[scale=0.15]{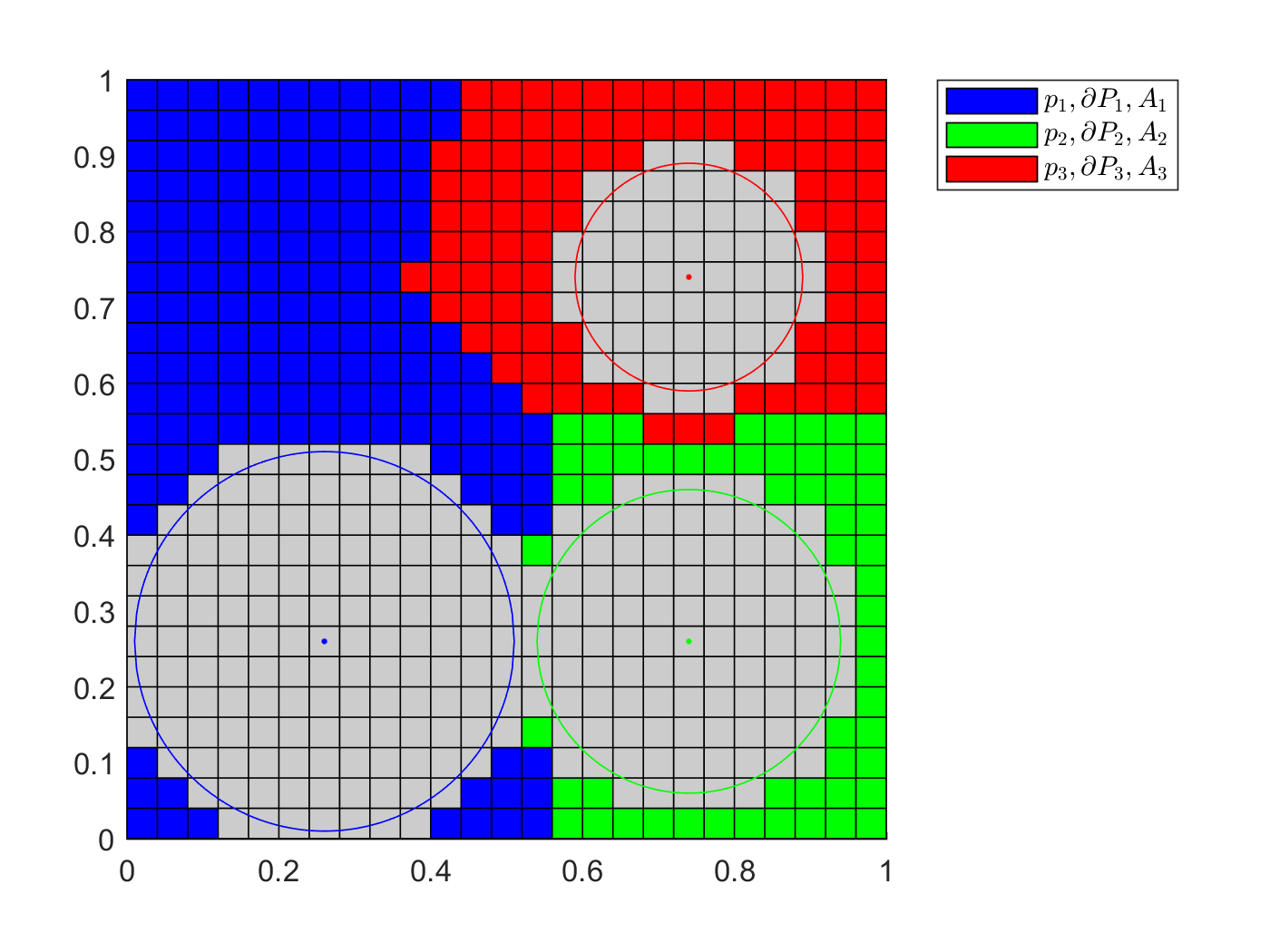}
\caption{Example 4.1 from \cite{optPartition} in the discrete scheme} \label{DULexample}
\end{figure}

Consider a suitable solution $\bm{Q}=((k_1,l_1),...,(k_{\rho},l_{\rho}))\in \bm{\Gamma}$ of  problem DBL and let $A(\bm{Q})=(A_1(\bm{Q}),...,A_{\rho}(\bm{Q}))$ be the optimal partition of the customers in $\OOmega(\bm{Q})$ under the assumptions above. Under those assumptions, for each $i\in\{1,...,\rho\}$, the access cost incurred by all customers assigned to the dimensional facility $P_i$ can be expressed as
\begin{equation*}
\int_{A_i(\bm{Q})} a_i D(q)dq =  \sum_{(r,s)\in \bm{A}_i(\bm{Q})} \int_{(r,s)} a_i D(q)dq= \sum_{(r,s)\in \bm{A}_i(\bm{Q})} a_i w^D_{rs},
\end{equation*}
where we are using the notation $ w^D_{rs} = \int_{(r,s)} D(q) dq$
for any $(r,s)\in \bm{\Omega}$.
Moreover, the distribution cost in the service region $A_i(\bm{Q})$ is
\begin{eqnarray*}
U_i(A_i(\bm{Q})) & = & \int_{A_i(\bm{Q})} u_G(q,P_i^{q_{(k_i,l_i)}}) D(q)dq \\
&=& \sum_{(r,s)\in \bm{A}_i(\bm{Q})} \left[ \int_{(r,s)}  u_G(q,P_i^{q_{(k_i,l_i)}}) D(q)dq \right]\\
&= & \sum_{(r,s)\in \bm{A}_i(\bm{Q})} \left[ \int_{(r,s)} u(q_{(r,s)},P_i^{q_{(k_i,l_i)}}) D(q)dq \right]\\
&=& \sum_{(r,s)\in \bm{A}_i(\bm{Q})}  w^D_{rs} u(q_{(r,s)},P_i^{q_{(k_i,l_i)}}),
\end{eqnarray*}
for each $i\in \{1,...,\rho\}$. Thus, the partition $A(\bm{Q})=(A_1(\bm{Q}),...,A_{\rho}(\bm{Q}))$ is given by the solution $\bm{A}(\bm{Q})=(\bm{A}_1(\bm{Q}),...,\bm{A}_{\rho}(\bm{Q}))$ of the discretized lower level problem
$$ \displaystyle  \min_{\bm{A}(\bm{Q})\in \bm{\mathcal{A}}_{\rho}(\bm{Q})} \sum_{i=1}^{\rho}   \sum_{(r,s)\in \bm{A}_i(\bm{Q})} [a_i w^D_{rs} + w^D_{rs}  u(q_{(r,s)},P_i^{q_{(k_i,l_i)}}) ], \eqno{\text{DLL$(\bm{Q})$}}   $$
being $\bm{\mathcal{A}}_{\rho}(\bm{Q})$ the set of all partitions in $\rho$ subsets (where the empty set is a valid subset) of the set $\bm{\Omega}(\bm{Q})$.

The assignment cost of a cell $(r,s)\in \bm{\Omega}(\bm{Q})$ to a dimensional facility $P_i$ in $\{P_1,...,P_{\rho}\}$ is $a_i w^D_{rs} + w^D_{rs}  u(q_{(r,s)},P^{q_{(k_i,l_i)}})$. Then, note that in problem DLL$(\bm{Q})$ we are minimizing the sum of the assignment costs of the cells in $\bm{\Omega}(\bm{Q})$.
Thus, the optimal partition $\bm{A}(\bm{Q})$ is the one that allocates each cell $(r,s)\in \bm{\Omega}(\bm{Q})$ to the dimensional facility in $\{P_1,...,P_{\rho}\}$ that provides the  minimum assignment cost, i.e., if $(r,s)\in \bm{A}_i(\bm{Q})$ for some $i\in\{1,...,\rho\}$ in the optimal partition, then
$$ a_i w^D_{rs} + w^D_{rs}  u(q_{(r,s)},P_i^{q_{(k_i,l_i)}})\leq  a_j w^D_{rs} + w^D_{rs}  u(q_{(r,s)},P_j^{q_{(k_j,l_j)}}),$$
for all  $j\in\{1,...,\rho\}$. Note that there may exist cells $(r,s)\in \bm{\Omega}(\bm{Q})$ for which
$$ a_i w^D_{rs} + w^D_{rs}  u(q_{(r,s)},P_i^{q_{(k_i,l_i)}}) =  a_j w^D_{rs} + w^D_{rs}  u(q_{(r,s)},P_j^{q_{(k_j,l_j)}})$$
for some $i,j\in \{1,...,\rho\}$ with $i\not=j$, such that they have a non $D$-negligible demand density $w^D_{rs}$. Therefore, in the discrete scheme, we can not define the best reply function $\widehat{\bm{A}}: \bm{Q}\in \bm{\Gamma}\to  \widehat{\bm{A}}(\bm{Q})\in \bm{\mathcal{A}}_{\rho}(\bm{Q})$ as we have done  in the non-discretized problem, since it could be not injective.

Reasoning in the same way as above, problem DBL can be expressed as:
$$ \begin{array}{ll} \displaystyle  \min  & \bm{\mathcal{F}}(\bm{Q}) \\
s.t. &   \widehat{\bm{A}}(\bm{Q})\in \arg\displaystyle  \min_{\bm{A}(\bm{Q})\in \bm{\mathcal{A}}_{\rho}(\bm{Q})} \sum_{i=1}^{\rho}   \sum_{(r,s)\in \bm{A}_i(\bm{Q})} [a_i w^D_{rs} + w^D_{rs}  u(q_{(r,s)},P_i^{q_{(k_i,l_i)}}) ],   \\
     &   \bm{Q} \in \bm{\Gamma},  \end{array}  \eqno{\text{DBL}} $$
being
$$\bm{\mathcal{F}}(\bm{Q}) := \displaystyle   \sum_{i=1}^{\rho} \left[I_i^{\text{PL}}\left(\sum_{(r,s)\in\bm{P}_i^{(k_i,l_i)}} w_{rs}^B\right) + C_i^{\text{PL}}\left(\sum_{(r,s)\in \widehat{\bm{A}}_i(\bm{Q})} w^D_{rs}\right)\right] + L^{\text{PL}}\left(\sum_{i=1}^{\rho} \sum_{(r,s)\in\bm{P}_i^{(k_i,l_i)}} w_{rs}^D\right),$$
where we are using the notation
$w^B_{rs} = \int_{(r,s)} B(q) dq$
for any $(r,s)\in \bm{\Omega}$. Problem DBL is again a bilevel problem since to evaluate a suitable solution $\bm{Q}\in \bm{\Gamma}$ in the objective function $\bm{\mathcal{F}}$ one needs to solve before problem DLL$(\bm{Q})$.  Note that $Q=(q_{(k_1,l_1)},...,q_{(k_{\rho},l_{\rho})})\in \Gamma$ for all $\bm{Q}=((k_1,l_1),...,(k_{\rho},l_{\rho}))\in \bm{\Gamma}$, i.e., every suitable solution of problem DBL codifies a suitable solution of problem BL.

It is easy to prove that problem DBL is NP-hard with a reduction from the $\rho$-median problem, where $\rho$ is the number of facilities to be located in our problem.


In the following, we suppose that there exists a suitable solution $\mathring{Q}=(\mathring{q}_1,...,\mathring{q}_{\rho})\in \Gamma$ for problem BL such that $P_i^{\mathring{q}_i}\cap \partial \Omega = \emptyset$ for all $i\in \{1,...,\rho\}$ and $P_i^{\mathring{q}_i}\cap P_j^{\mathring{q}_j} = \emptyset$ for all $i,j\in \{1,...,\rho\}$ with $i\not=j$. This ensures the existence of a grid $G$, fine enough, for which problem DBL has at least one suitable solution: take a grid $G$ in which the point $\mathring{q}_i$ is the center of one of the cells in $\bm{G}$, say $(\mathring{k}_i,\mathring{l}_i)$, for each $i\in\{1,...,\rho\}$, and fine enough to guarantee $\bm{P}_i^{(\mathring{k}_i,\mathring{l}_i)}\cap \bm{P}_j^{(\mathring{k}_j,\mathring{l}_j)} = \emptyset$ for all $i,j\in \{1,...,\rho\}$ with $i\not=j$; then, $\mathring{\bm{Q}}=((\mathring{k}_1,\mathring{l}_1),...,(\mathring{k}_{\rho},\mathring{l}_{\rho}))\in \bm{\Gamma}$.

Next, we show our convergence results.

Let us consider a sequence of successively  refined grids $\{ G(n) \}_{n\in \mathbb{N}}$ satisfying that $G(1)$ is a grid for which problem DBL has at least one suitable solution. The sequence of grids $\{ G(n) \}_{n\in \mathbb{N}}$ is a sequence of successively  refined grids if given a grid $G(\tilde{n})$ and any of its cell $(\tilde{k},\tilde{l})$, there exists $\breve{n}\in \mathbb{N}$ with $\breve{n} > \tilde{n}$ such that $(\tilde{k},\tilde{l})$ is the union of a set of cells of the grid $G(\breve{n})$ with strictly less width and height than $(\tilde{k},\tilde{l})$. We add an additional index $n$ to the notation introduced in the section to indicate the grid of the sequence which is being considered in each case. For example, DLL$(\bm{Q},n)$ is the discretized lower level problem for a suitable solution $\bm{Q}\in \bm{\Gamma}(n)$ when we consider the grid $G(n)$, $n\in \mathbb{N}$. Finally, we denote by $\kappa (n)$ the maximum edge length of a cell in $\bm{G}(n)$, $n\in \mathbb{N}$.

In the following results, we assume that the functions $I_1^{\text{PL}}(\cdot , n),...,I_{\rho}^{\text{PL}}(\cdot, n),C_1^{\text{PL}}(\cdot, n),...,C_{\rho}^{\text{PL}}(\cdot, n),L^{\text{PL}}(\cdot, n)$ of  problem DBL$(n)$ are obtained from the functions $I_1(\cdot),...,I_{\rho}(\cdot),C_1(\cdot),...,C_{\rho}(\cdot),L(\cdot)$ of problem BL by linear interpolation over a partition of the corresponding domains, in such a way that, the larger the $n$, the finer the partition. Moreover, we suppose that the partition is such that, for any $\varepsilon > 0$, there exists $\tilde{n}\in\mathbb{N}$ such that  $\left|I_1(\omega_1^B)-I_1^{\text{PL}}(\omega_1^B,n)\right| < \varepsilon$, for all $\omega_1^B \in \mathbb{R}$ and all $n\in \mathbb{N}$ with $n\geq \tilde{n}$. The same assumption is done  for the remaining mentioned functions. Note that this assumptions can be done due to the properties assumed for the functions  $I_1,...,I_{\rho},C_1,...,C_{\rho},L$.


\begin{Lem}\label{convergenceI}
Let $i\in\{1,...,\rho\}$. For any $\varepsilon > 0$, there exists $n(\varepsilon)\in \mathbb{N}$ such that
$$\left| I_i\left( \int_{P_i^{q_{(k_i,l_i)}}} B(q) dq \right) - I_i^{\text{PL}}\left( \int_{\mathbb{P}_i^{(k_i,l_i)}(n)} B(q) dq,n \right) \right|< \varepsilon,$$
for all $\bm{Q}=((k_1,l_1),...,(k_{\rho},l_{\rho}))\in \bm{\Gamma}(n)$ and all $n\in \mathbb{N}$ with $n\geq n(\varepsilon)$.
\end{Lem}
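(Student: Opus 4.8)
The plan is to control the quantity by splitting it, via the triangle inequality, into a \emph{function-approximation} error (replacing $I_i$ by its piecewise-linear interpolant $I_i^{\mathrm{PL}}(\cdot,n)$) and a \emph{domain-approximation} error (replacing the true facility $P_i^{q_{(k_i,l_i)}}$ by the cell facility $\mathbb{P}_i^{(k_i,l_i)}(n)$). Writing $\omega = \int_{P_i^{q_{(k_i,l_i)}}} B(q)\,dq$ and $\widehat\omega = \int_{\mathbb{P}_i^{(k_i,l_i)}(n)} B(q)\,dq$, I would bound
$$\left| I_i(\omega) - I_i^{\mathrm{PL}}(\widehat\omega,n)\right| \le \left| I_i(\omega)-I_i(\widehat\omega)\right| + \left| I_i(\widehat\omega)-I_i^{\mathrm{PL}}(\widehat\omega,n)\right|.$$
The second summand is handled immediately by the standing assumption on the interpolants: given $\varepsilon>0$ there is $n_1$ with $|I_i(t)-I_i^{\mathrm{PL}}(t,n)|<\varepsilon/2$ for every argument $t$ and every $n\ge n_1$; since this bound is uniform in the argument, it holds for $\widehat\omega$ regardless of $\bm Q$. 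For the first summand I would use that $I_i$ is continuous, hence uniformly continuous on the compact interval $[0,\int_\Omega B(q)\,dq]$ that contains both $\omega$ and $\widehat\omega$; thus it suffices to make $|\omega-\widehat\omega|$ small, uniformly in $\bm Q$.

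The core of the argument is therefore to show that $|\omega - \widehat\omega| \to 0$ as $n\to\infty$, uniformly over all $\bm Q\in\bm\Gamma(n)$. First I would observe that $P_i^{q_{(k_i,l_i)}}\subseteq \mathbb{P}_i^{(k_i,l_i)}(n)$: since $P_i$ is the closure of its interior, every point of $\mathrm{int}(P_i^{q_{(k_i,l_i)}})$ lies in a cell whose interior meets $\mathrm{int}(P_i^{q_{(k_i,l_i)}})$, so $\mathrm{int}(P_i^{q_{(k_i,l_i)}})\subseteq \mathbb{P}_i^{(k_i,l_i)}(n)$, and taking closures gives the inclusion. Consequently $\widehat\omega-\omega = \int_{\mathbb{P}_i^{(k_i,l_i)}(n)\setminus P_i^{q_{(k_i,l_i)}}} B(q)\,dq \ge 0$. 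Next I would note that the difference set $\mathbb{P}_i^{(k_i,l_i)}(n)\setminus P_i^{q_{(k_i,l_i)}}$ is a union of cells that meet both $\mathrm{int}(P_i^{q_{(k_i,l_i)}})$ and the complement of $P_i^{q_{(k_i,l_i)}}$; being connected, each such cell meets $\partial P_i^{q_{(k_i,l_i)}}$, hence lies within the closed $\sqrt 2\,\kappa(n)$-neighbourhood of $\partial P_i^{q_{(k_i,l_i)}}$. Thus $m\big(\mathbb{P}_i^{(k_i,l_i)}(n)\setminus P_i^{q_{(k_i,l_i)}}\big) \le m\big((\partial P_i)^{\sqrt2\,\kappa(n)}\big)$, where the right-hand side is translation-invariant and therefore independent of $\bm Q$.

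Since the facility shape is fixed and its boundary $\partial P_i$ is Lebesgue-null (as holds for the facilities under consideration), continuity of the Lebesgue measure from above gives $m\big((\partial P_i)^{\delta}\big)\to m(\partial P_i)=0$ as $\delta\to 0^+$. I would then invoke the absolute continuity of the measure $B\,dq$ exactly as in the Claim inside the proof of Theorem \ref{existenceThm}: for the $\delta$ furnished by the uniform continuity of $I_i$ there is $\eta>0$ such that $\int_E B(q)\,dq<\delta$ whenever $m(E)<\eta$, and I would choose $n_2$ so that $m\big((\partial P_i)^{\sqrt2\,\kappa(n)}\big)<\eta$ for all $n\ge n_2$. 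Combining, $|\omega-\widehat\omega|<\delta$ uniformly in $\bm Q$, whence $|I_i(\omega)-I_i(\widehat\omega)|<\varepsilon/2$, and with $n(\varepsilon):=\max\{n_1,n_2\}$ the two halves add up to $\varepsilon$. The main obstacle I anticipate is precisely securing the \emph{uniformity over all} $\bm Q\in\bm\Gamma(n)$: this hinges on the translation-invariance of the Lebesgue measure of the thickened boundary together with the fact that $\partial P_i$ is Lebesgue-null, so that the domain-approximation error is controlled by a single modulus depending only on $\kappa(n)$ and not on the individual locations $(k_i,l_i)$.
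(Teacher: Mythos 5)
Your proof is correct and reaches the same conclusion as the paper, with the same outer decomposition (triangle inequality splitting the interpolation error, handled by the standing uniform assumption on $I_i^{\text{PL}}(\cdot,n)$, from the domain error, handled by uniform continuity of $I_i$ on $\left[0,\int_\Omega B(q)\,dq\right]$ plus absolute continuity of $B\,dq$); but you treat the domain error by a genuinely different mechanism. The paper introduces the eroded and dilated facilities $\mathbb{P}_i^-(q_i,n)$ and $\mathbb{P}_i^+(q_i,n)$ (defined via $\mathcal{B}_{\infty}(q_i,\kappa(n))$), sandwiches the cell facility as $\mathbb{P}_i^-(q_{(k_i,l_i)},n)\subseteq \mathbb{P}_i^{(k_i,l_i)}(n)\subseteq \mathbb{P}_i^+(q_{(k_i,l_i)},n)$, and then runs the monotone-convergence-of-Lebesgue-measure argument from the Claim inside the proof of Theorem \ref{existenceThm}, with uniformity over $q_i\in\Omega_i$ coming from translation invariance of the erosion/dilation discrepancies. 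You instead prove the one-sided inclusion $P_i^{q_{(k_i,l_i)}}\subseteq \mathbb{P}_i^{(k_i,l_i)}(n)$ (a correct observation the paper never makes, valid since every closed cell containing a point of $\mathrm{int}(P_i^{q_{(k_i,l_i)}})$ has interior meeting it, and $P_i$ is the closure of its interior) and bound the discrepancy set directly by the $\sqrt{2}\,\kappa(n)$-neighbourhood of $\partial P_i$, using continuity of $m$ from above and translation invariance; this is more elementary and makes the uniformity in $\bm{Q}$ completely transparent. Two remarks on hypotheses: your step requires $m(\partial P_i)=0$, which you flag parenthetically but which does not follow from the paper's definition of a dimensional facility (closures of open connected sets can have boundaries of positive measure); note, however, that the paper's own proof needs exactly the same fact silently, since its assertion $\bigcup_{n}\mathbb{P}_i^-(q_i,n)=P_i^{q_i}$ up to $m$-negligible sets holds precisely when the boundary is null (the union of the erosions is $\mathrm{int}(P_i^{q_i})$), so this is not a gap relative to the paper. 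Likewise both arguments tacitly use $\kappa(n)\to 0$, which the paper's definition of successively refined grids does not literally force but clearly intends. A practical trade-off: the paper's two-sided $\mathbb{P}_i^{\pm}$ machinery is reused verbatim in Lemma \ref{convergenceC}, so it amortizes across the convergence lemmas, whereas your boundary-neighbourhood estimate is leaner for this lemma in isolation.
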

\begin{proof}
First, take $\tilde{n}\in\mathbb{N}$ such that $\left|I_i(\omega_i^B)-I_i^{\text{PL}}(\omega_i^B,n)\right| < \varepsilon / 2$, for all $\omega_i^B \in \left[0,\int_{\Omega} B(q) dq \right]$ and all $n\in \mathbb{N}$ with $n\geq \tilde{n}$. Since $I_i$ is continuous, it is uniformly continuous on $\left[0,\int_{\Omega} B(q) dq \right]$, therefore, for $\varepsilon / 2$ there exists $\xi > 0$ such that, when $|\omega_i^B - \tilde{\omega}_i^B| < \xi$, then $|I_i(\omega_i^B) - I_i(\tilde{\omega}_i^B)| < \varepsilon / 2$, for all $\omega_i^B, \tilde{\omega}_i^B \in \left[0,\int_{\Omega} B(q) dq \right]$.

Let $\mathbb{P}^-_i(n)$ be the dimensional facility such that, when it is located at the point $q_i\in\mathbb{R}^2$, it is given by $\mathbb{P}^-_i(q_i,n)=\{q\in \mathbb{R}^2 : q\in P_i^{\tilde{q}_i} \text{ for all } \tilde{q}_i\in \mathcal{B}_{\infty}(q_i,\kappa(n))\}$, for each $n\in\mathbb{N}$ with $n\geq \breve{n}$, being $\breve{n}\in\mathbb{N}$ large enough. In addition, let $\mathbb{P}^+_i(n)$ be the dimensional facility such that, when it is located at the point $q_i\in\mathbb{R}^2$, it is given by $\mathbb{P}^+_i(q_i,n)=\{q\in \mathbb{R}^2 : q\in P_i^{\tilde{q}_i} \text{ for some } \tilde{q}_i\in \mathcal{B}_{\infty}(q_i,\kappa(n))\}$, for each $n\in\mathbb{N}$.
Whereas $\mathbb{P}^+_i(n)$ is a dimensional facility for all $n\in \mathbb{N}$, $\mathbb{P}^-_i(n)$ can not be a dimensional facility for all $n\in \mathbb{N}$. However, it is not difficult to see that $\mathbb{P}^-_i(n)$ is a dimensional facility for all $n$ large enough. This is the reason why we define $\mathbb{P}^-_i(n)$ only  for each $n\in\mathbb{N}$ with $n\geq \breve{n}$, being $\breve{n}\in\mathbb{N}$ large enough.

Note that
$ \mathbb{P}_i^-(q_i,n)\subseteq \mathbb{P}_i^-(q_i,n+1) \subseteq   P_i^{q_i}$, and that $\bigcup_{n\geq \breve{n}}^{\infty} \mathbb{P}_i^-(q_i,n) = P_i^{q_i}$ up to $m$-negligible sets, for any $q_i\in \mathbb{R}^2$. In the same way, $P_i^{q_i} \subseteq \mathbb{P}_i^+(q_i,n+1) \subseteq \mathbb{P}_i^+(q_i,n)$ and $\bigcap_{n\geq \breve{n}}^{\infty} \mathbb{P}_i^+(q_i,n) = P_i^{q_i}$ up to $m$-negligible sets, for any $q_i\in \mathbb{R}^2$. Then, reasoning in the same way that in the proof of Theorem \ref{existenceThm}, it can be shown that  there exists $\bar{n}\in \mathbb{N}$ with $\bar{n}\geq \breve{n}$ such that $\int_{P_i^{q_i} \setminus \mathbb{P}_i^- (q_i,n) } B(q)dq < \xi $ and $\int_{\mathbb{P}_i^+ (q_i,n) \setminus  P_i^{q_i}  } B(q)dq < \xi  $ for all $q_i\in \Omega_i$ and all $n\in\mathbb{N}$ with $n\geq \bar{n}$.

Moreover, it is not difficult to see that $\mathbb{P}_i^-(q_{(k_i,l_i)},n)\subseteq \mathbb{P}_i^{(k_i,l_i)}(n)\subseteq \mathbb{P}_i^+(q_{(k_i,l_i)},n)$, for any $n\in\mathbb{N}$ with $n\geq \breve{n}$ and any suitable solution $\bm{Q}=((k_1,l_1),...,(k_{\rho},l_{\rho}))\in \bm{\Gamma}(n)$ of the problem DBL$(n)$. Therefore, $\left| \int_{P_i^{q_{(k_i,l_i)}}} B(q)dq - \int_{\mathbb{P}_i^{(k_i,l_i)}(n)} B(q)dq\right| < \xi$ for all $\bm{Q}=((k_1,l_1),...,(k_{\rho},l_{\rho}))\in \bm{\Gamma}(n)$ and all $n\in\mathbb{N}$ with $n\geq \bar{n}$.

The proof is completed taking $n(\varepsilon) = \max \{\tilde{n}, \bar{n}\}$. \qed
\end{proof}

\begin{Lem}\label{convergenceC}
Let $i\in\{1,...,\rho\}$. For any $\varepsilon > 0$, there exists $n(\varepsilon)\in \mathbb{N}$ such that
$$\left| C_i\left( \int_{\widehat{A}_i(Q)} D(q) dq \right) - C_i^{\text{PL}}\left( \int_{\widehat{A}_i(\bm{Q},n)} D(q) dq,n \right) \right|< \varepsilon,$$
for all $\bm{Q}=((k_1,l_1),...,(k_{\rho},l_{\rho}))\in \bm{\Gamma}(n)$, being $Q=(q_{(k_1,l_1)},...,q_{(k_{\rho},l_{\rho})})$, and all $n\in \mathbb{N}$ with $n\geq n(\varepsilon)$.
\end{Lem}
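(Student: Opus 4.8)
The plan is to follow the architecture of the proof of Lemma~\ref{convergenceI}: peel off the piecewise-linear approximation of $C_i$ and the continuity of $C_i$, reducing everything to a single estimate on the $D$-measure of the two best-reply regions, and then control that estimate by a sandwich argument. First I would fix $\varepsilon>0$ and use the standing approximation hypothesis to choose $\tilde n$ with $|C_i(\omega)-C_i^{\text{PL}}(\omega,n)|<\varepsilon/2$ for every $\omega\in[0,1]$ and every $n\ge\tilde n$; since $C_i$ is continuous on the compact interval $[0,1]$ it is uniformly continuous there, so there is $\xi>0$ with $|C_i(\omega)-C_i(\tilde\omega)|<\varepsilon/2$ whenever $|\omega-\tilde\omega|<\xi$. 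Writing $\omega_{\mathrm{cont}}=\int_{\widehat A_i(Q)}D(q)\,dq$ and $\omega_{\mathrm{disc}}=\int_{\widehat A_i(\bm Q,n)}D(q)\,dq$ and inserting the intermediate value $C_i(\omega_{\mathrm{disc}})$, the triangle inequality bounds the quantity in the statement by $|C_i(\omega_{\mathrm{cont}})-C_i(\omega_{\mathrm{disc}})|+|C_i(\omega_{\mathrm{disc}})-C_i^{\text{PL}}(\omega_{\mathrm{disc}},n)|$, whose second term is below $\varepsilon/2$ once $n\ge\tilde n$. Thus the whole statement reduces to exhibiting $\bar n$ such that $|\omega_{\mathrm{cont}}-\omega_{\mathrm{disc}}|<\xi$ for all $\bm Q\in\bm\Gamma(n)$ and all $n\ge\bar n$; then $n(\varepsilon)=\max\{\tilde n,\bar n\}$ finishes the proof.

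For the core estimate I would compare the continuous best reply $\widehat A_i(Q)$ of~\eqref{optPart} with the discrete one $\widehat A_i(\bm Q,n)=\mathbb A_i(\bm Q)$, which assigns a cell $(r,s)$ to the facility minimizing $a_j+u(q_{(r,s)},P_j^{q_j})$, where $Q=(q_{(k_1,l_1)},\dots,q_{(k_\rho,l_\rho)})$. A point can only be assigned differently in the two models if it lies where the domains $\Omega(Q)$ and $\OOmega(\bm Q)$ disagree, or close to a boundary between continuous best-reply cells. To quantify the latter I would introduce the uniform discretization modulus $\xi_n=\max\{|u(q,P_j^{q_j})-u(q_{(r,s)},P_j^{q_j})|\}$, the maximum taken over $q\in\Omega$, $j\in\{1,\dots,\rho\}$, $q_j\in\Omega$ and the cell $(r,s)$ containing $q$; as $u$ is continuous on the compact set $\Omega\times\Omega$ and $\kappa(n)\to0$, uniform continuity yields $\xi_n\to0$. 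Reasoning exactly as in the proof of Theorem~\ref{existenceThm}, I would sandwich $\mathbb A_i(\bm Q)$ between an inner region $\widehat A_i^-(Q,n)$ (strict preference for $i$ by margin $2\xi_n$, intersected with an inner domain built from the cell facilities) and an outer region $\widehat A_i^+(Q,n)$ (preference within margin $2\xi_n$, intersected with the outer domain $\OOmega$), giving
$$\bigl|\omega_{\mathrm{cont}}-\omega_{\mathrm{disc}}\bigr|\le\int_{\widehat A_i^+(Q,n)\setminus\widehat A_i^-(Q,n)}D(q)\,dq.$$

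It remains to make the right-hand side small uniformly in $\bm Q$, and here I would split the difference set into a domain part and a margin part. The domain part sits inside $(\OOmega\setminus\Omega)\cup\bigcup_j\bigl(P_j^{q_j}\triangle\mathbb P_j^{(k_j,l_j)}\bigr)$ and is handled verbatim as in Lemma~\ref{convergenceI}: the facility shapes are fixed translates, so these boundary layers have a Lebesgue measure independent of the root locations and tending to $0$, whence the absolute continuity of $D$ drives their $D$-measure below any threshold, uniformly in $\bm Q$. The margin part lies in $R_i(Q,2\xi_n)=\{q:\exists\,j\ne i,\ |a_i+u(q,P_i^{q_i})-a_j-u(q,P_j^{q_j})|\le2\xi_n\}$, and \emph{the hard part is to prove that} $\sup_{Q\in\Gamma}\int_{R_i(Q,\delta)}D(q)\,dq\to0$ as $\delta\to0$. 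For each fixed $Q$ the sets $R_i(Q,\delta)$ decrease, as $\delta\downarrow0$, to the tie set $R_i(Q,0)$, which is $D$-negligible by the standing hypothesis of Theorem~\ref{bestReply}, so $\int_{R_i(Q,\delta)}D\,dq\downarrow0$ pointwise in $Q$ by continuity from above of the finite measure $D$. To upgrade this to uniform convergence over the compact set $\Gamma$ (Lemma~\ref{lemCompact}) I would argue by contradiction: a failure would yield $c>0$, $\delta_m\downarrow0$ and $Q_m\to Q^\ast\in\Gamma$ with $\int_{R_i(Q_m,\delta_m)}D\,dq\ge c$; since $u$ is continuous, every point lying in infinitely many $R_i(Q_m,\delta_m)$ satisfies $a_i+u(q,P_i^{q_i^\ast})=a_j+u(q,P_j^{q_j^\ast})$ for some $j\ne i$, so $\limsup_m R_i(Q_m,\delta_m)$ is contained in the tie set of $Q^\ast$, and the reverse Fatou lemma for $D$ would then force that tie set to carry $D$-mass at least $c>0$, contradicting the standing hypothesis. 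Combining the two parts produces the required $\bar n$ and closes the argument.
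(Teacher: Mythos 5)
Your proposal is correct and shares the paper's outer architecture (peel off the piecewise-linear approximation error, use uniform continuity of $C_i$ on $[0,1]$, and reduce everything to a bound on $\left|\int_{\widehat A_i(Q)}D(q)dq-\int_{\widehat A_i(\bm Q,n)}D(q)dq\right|$ uniform in $\bm Q$), but you run the core estimate by a genuinely different route. The paper sandwiches $\widehat A_i(\bm Q,n)$ between purely \emph{geometric} envelopes $\widehat{\mathbb{A}}_i^{\pm}(Q,n)$ --- the erosion/dilation of the continuous best-reply region obtained by translating it over $\mathcal{B}_{\infty}(\cdot,\kappa(n))$, corrected by the facility layers $\mathbb{P}_i^{\pm}$ --- then derives smallness of the gap from monotone continuity of the measure for each fixed $Q$, and simply asserts that the threshold can be taken uniformly over $Q\in\Gamma$ (``as the statement above is true for all $Q\in\Gamma$, there exists $\breve n$\dots''), delegating the final assembly to an adaptation of Lemma \ref{convergenceI}. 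You instead use \emph{utility-margin} envelopes governed by the discretization modulus $\xi_n$ (closer in spirit to the $3\xi_n$-sets in the Claim inside the proof of Theorem \ref{existenceThm} than to the envelopes the paper actually uses here), split the discrepancy into a domain layer plus a tie-margin layer $R_i(Q,2\xi_n)$, and --- this is the real added value --- you \emph{prove} the uniformity over the compact set $\Gamma$ (Lemma \ref{lemCompact}) by a compactness/contradiction argument with reverse Fatou, anchored in the standing $D$-negligibility hypothesis of Theorem \ref{bestReply}; this is exactly the step where the paper's pointwise-in-$Q$ reasoning does not by itself deliver a uniform $n$, so your version is more complete there. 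One small patch is needed: define $R_i(Q,\delta)$ intersected with $\Omega(Q)$ (the margin layer only matters there anyway), because the standing hypothesis makes the tie set negligible only \emph{within} $\Omega(Q^*)$, not on facility interiors; this costs nothing, since for $q\in\text{int}(P_l^{q_l^*})$ openness gives $q\in\text{int}(P_l^{q_l^{(m)}})$ for all large $m$, so such points drop out of $\limsup_m\bigl(R_i(Q_m,\delta_m)\cap\Omega(Q_m)\bigr)$ and your reverse-Fatou contradiction closes as intended. With that one-line adjustment your argument is sound, and in the uniformity step arguably more self-contained than the paper's own proof.
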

\begin{proof}
For all $Q=(q_1,...,q_{\rho})\in \Gamma$ and all $n\in \mathbb{N}$, we define the sets
$\widehat{\mathbb{A}}_i^-(Q,n) = \{q\in \Omega: q\in (\widehat{A}_i(Q))^{\tilde{q}_i}  \text{ for all } \tilde{q}_i\in\mathcal{B}_{\infty}(\widehat{q}_i(Q,n),\kappa(n))\}\setminus \mathbb{P}_i^+(q_i,n)$
and
$\widehat{\mathbb{A}}^+_i(Q,n) = \{q\in \Omega : q\in (\widehat{A}_i(Q))^{\tilde{q}_i}  \text{ for some } \tilde{q}_i\in\mathcal{B}_{\infty}(\widehat{q}_i(Q,n),\kappa(n))\}$, being: $\widehat{q}_i(Q,n)$  any point in $\widehat{A}_i(Q,n)$; $(\widehat{A}_i(Q,n))^{\tilde{q}_i}$  the set $\widehat{A}_i(Q,n)$  when we apply to it the translation induced by the vector $\overrightarrow{\widehat{q}_i(Q,n) \tilde{q}_i}\in \mathbb{R}^2$, for any $\tilde{q}_i\in\mathcal{B}_{\infty}(\widehat{q}_i(Q,n),\kappa(n))$. Note that the definition of the sets above: does not depend on the point $\widehat{q}_i(Q,n)$ chosen; induces two applications  $\widehat{\mathbb{A}}_i^-$ and $\widehat{\mathbb{A}}_i^+$ with domain on $\Gamma \times \mathbb{N}$.

Let $Q\in \Gamma$. Note that $ \widehat{\mathbb{A}}_i^-(Q,n)\subseteq \widehat{\mathbb{A}}_i^-(Q,n+1) \subseteq   \widehat{A}_i(Q)$, and that $\bigcup_{n=1}^{\infty} \widehat{\mathbb{A}}_i^-(Q,n) = \widehat{A}_i(Q)$ up to $m$-negligible sets. In the same way, $ \widehat{A}_i(Q) \subseteq  \widehat{\mathbb{A}}_i^+(Q,n+1)\subseteq \widehat{\mathbb{A}}_i^+(Q,n) \subseteq  $ and $\bigcap_{n=1}^{\infty} \widehat{\mathbb{A}}_i^+(Q,n) = \widehat{A}_i(Q)$ up to $m$-negligible sets. Then, reasoning in the same way that in the proof of Theorem \ref{existenceThm}, it can be shown that, given $\xi > 0$, there exists $\tilde{n}\in \mathbb{N}$ such that $\int_{\widehat{A}_i(Q) \setminus \widehat{\mathbb{A}}^-_i(Q,n)} D(q)dq < \xi $ and $\int_{\widehat{\mathbb{A}}^+_i(Q,n) \setminus \widehat{A}_i(Q)} D(q)dq < \xi  $ for  all $n\in\mathbb{N}$ with $n\geq \tilde{n}$. Moreover, as the statement above is true for all $Q\in \Gamma$, there exists $\breve{n}\in \mathbb{N}$ such that $\int_{\widehat{A}_i(Q) \setminus \widehat{\mathbb{A}}^-_i(Q,n)} D(q)dq < \xi $ and $\int_{\widehat{\mathbb{A}}^+_i(Q,n) \setminus \widehat{A}_i(Q)} D(q)dq < \xi  $ for  all $Q\in \Gamma$ and all  $n\in\mathbb{N}$ with $n\geq \breve{n}$.

It is not difficult to see that $\widehat{\mathbb{A}}^-_i(Q,n)\subseteq \widehat{A}_i(\bm{Q},n)\subseteq \widehat{\mathbb{A}}^+_i(Q,n)$, for all $\bm{Q}=((k_1,l_1),...,(k_{\rho},l_{\rho}))\in \bm{\Gamma}(n)$, being $Q=(q_{(k_1,l_1)},...,q_{(k_{\rho},l_{\rho})})$, and all $n\in \mathbb{N}$. So, at this point, the proof can be completed adapting the one of Lemma \ref{convergenceI}. \qed
\end{proof}

\begin{Lem}\label{convergenceL}
For any $\varepsilon > 0$, there exists $n(\varepsilon)\in \mathbb{N}$ such that
$$\left| L\left( \int_{\bigcup_{i=1}^{\rho}P_i^{q_{(k_i,l_i)}}} D(q) dq \right) - L^{\text{PL}}\left( \int_{\bigcup_{i=1}^{\rho} \mathbb{P}_i^{(k_i,l_i)}(n)} D(q) dq,n \right) \right|< \varepsilon,$$
for all $\bm{Q}=((k_1,l_1),...,(k_{\rho},l_{\rho}))\in \bm{\Gamma}(n)$ and all $n\in \mathbb{N}$ with $n\geq n(\varepsilon)$.
\end{Lem}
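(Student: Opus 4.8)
The plan is to mirror the structure of the proofs of Lemma \ref{convergenceI} and Lemma \ref{convergenceC}, since the statement for $L^{\text{PL}}$ is of exactly the same type: we must control both the error coming from replacing $L$ by its piecewise-linear interpolant $L^{\text{PL}}(\cdot,n)$ and the error coming from replacing the exact union of facilities $\bigcup_{i=1}^{\rho} P_i^{q_{(k_i,l_i)}}$ by its cell approximation $\bigcup_{i=1}^{\rho}\mathbb{P}_i^{(k_i,l_i)}(n)$. First I would invoke the interpolation hypothesis stated just before Lemma \ref{convergenceI}: given $\varepsilon>0$, there exists $\tilde{n}$ such that $\bigl|L(\omega^L)-L^{\text{PL}}(\omega^L,n)\bigr|<\varepsilon/2$ for all $\omega^L\in[0,1]$ and all $n\geq\tilde{n}$. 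Then, since $L$ is continuous on the compact interval $[0,1]$, it is uniformly continuous there, so for $\varepsilon/2$ there is $\xi>0$ with $|L(\omega^L)-L(\tilde{\omega}^L)|<\varepsilon/2$ whenever $|\omega^L-\tilde{\omega}^L|<\xi$.

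Next I would reuse the inner and outer facility approximations $\mathbb{P}_i^-(q_i,n)$ and $\mathbb{P}_i^+(q_i,n)$ already introduced in the proof of Lemma \ref{convergenceI}, together with the sandwiching $\mathbb{P}_i^-(q_{(k_i,l_i)},n)\subseteq \mathbb{P}_i^{(k_i,l_i)}(n)\subseteq \mathbb{P}_i^+(q_{(k_i,l_i)},n)$ that was established there. The new ingredient is that here the argument of $L$ involves the \emph{union} over $i$ and the measure is $D$ rather than $B$. I would therefore consider the unions $\bigcup_{i=1}^{\rho}\mathbb{P}_i^-(q_{(k_i,l_i)},n)$ and $\bigcup_{i=1}^{\rho}\mathbb{P}_i^+(q_{(k_i,l_i)},n)$, which sandwich $\bigcup_{i=1}^{\rho}\mathbb{P}_i^{(k_i,l_i)}(n)$, and compare both with $\bigcup_{i=1}^{\rho}P_i^{q_{(k_i,l_i)}}$. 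Using the monotone convergence of each $\mathbb{P}_i^-$ up to $P_i^{q_i}$ and of each $\mathbb{P}_i^+$ down to $P_i^{q_i}$ up to $m$-negligible sets (established in Lemma \ref{convergenceI}), and the elementary set inclusions $\bigl(\bigcup_i P_i^{q_i}\bigr)\setminus\bigl(\bigcup_i\mathbb{P}_i^-\bigr)\subseteq\bigcup_i\bigl(P_i^{q_i}\setminus\mathbb{P}_i^-\bigr)$ and likewise for the outer sets, the $D$-measure of the symmetric difference between $\bigcup_i\mathbb{P}_i^{(k_i,l_i)}(n)$ and $\bigcup_i P_i^{q_{(k_i,l_i)}}$ is bounded by $\sum_{i=1}^{\rho}$ of the individual facility errors. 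Invoking absolute continuity of $D$ as in Lemma \ref{convergenceC}, I would choose $n$ large enough (uniformly in $\bm{Q}\in\bm{\Gamma}(n)$, exactly as in Lemma \ref{convergenceI}) so that this total $D$-measure is below $\xi$, whence the two arguments of $L$ differ by less than $\xi$.

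Finally I would combine the two estimates by the triangle inequality: writing $\omega^L=\int_{\bigcup_i P_i^{q_{(k_i,l_i)}}}D$ and $\hat{\omega}^L=\int_{\bigcup_i\mathbb{P}_i^{(k_i,l_i)}(n)}D$, we have
$$\bigl|L(\omega^L)-L^{\text{PL}}(\hat{\omega}^L,n)\bigr|\leq \bigl|L(\omega^L)-L(\hat{\omega}^L)\bigr|+\bigl|L(\hat{\omega}^L)-L^{\text{PL}}(\hat{\omega}^L,n)\bigr|<\varepsilon/2+\varepsilon/2=\varepsilon,$$
and the proof is completed by taking $n(\varepsilon)=\max\{\tilde{n},\bar{n}\}$, where $\bar{n}$ is the threshold from the measure-approximation step. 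The main obstacle, modest as it is, lies in verifying that the threshold $\bar{n}$ can be chosen \emph{uniformly} over all suitable solutions $\bm{Q}\in\bm{\Gamma}(n)$ — that is, that the facility-approximation error does not degrade as the root points move — but this is exactly the uniformity already obtained over $q_i\in\Omega_i$ in Lemma \ref{convergenceI}, and summing the $\rho$ uniform bounds preserves uniformity; everything else is a routine adaptation of the two preceding lemmas.
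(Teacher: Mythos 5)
Your proposal is correct and takes essentially the same route as the paper, whose entire proof of this lemma is the remark that it is similar to that of Lemma \ref{convergenceI}: you carry out exactly that adaptation (interpolation error plus uniform continuity of $L$ on $[0,1]$, the inner/outer sets $\mathbb{P}_i^{-}$ and $\mathbb{P}_i^{+}$ with the sandwiching already established there, and the uniform threshold over $\bm{Q}\in\bm{\Gamma}(n)$), with the only genuinely new point --- bounding the $D$-measure of the symmetric difference of the unions by the sum of the $\rho$ individual facility errors --- handled correctly via the elementary inclusions you state. No gap remains.
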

\begin{proof}
The proof is similar to the one of Lemma \ref{convergenceI}.\qed
\end{proof}

From these lemmas one can obtain the final convergence result.

\begin{Thm}\label{convergenceDBL}
Suppose that, for any suitable solution $Q\in \Gamma$ of problem BL and for any $\epsilon >0$, there exists $\tilde{Q}=(\tilde{q}_1,...,\tilde{q}_{\rho})\in \mathcal{B}_{\infty}(Q,\epsilon)\cap \Gamma$ such that $P_i^{\tilde{q}_i}\cap \partial \Omega = \emptyset$ for all $i\in \{1,...,\rho\}$ and $P_i^{\tilde{q}_i}\cap P_j^{\tilde{q}_j} = \emptyset$ for all $i,j\in \{1,...,\rho\}$ with $i\not=j$. Then, for any $\varepsilon > 0$, there exists $n(\varepsilon)\in \mathbb{N}$ such that:
\begin{enumerate}
\item $\left| \mathcal{F}(Q^*) - \bm{\mathcal{F}}(\bm{Q}^*,n)\right| < \varepsilon$,

\item $\left| \mathcal{F}(Q^*) - \mathcal{F}(\bar{Q})\right| < \varepsilon,$
\end{enumerate}
for all $n\in \mathbb{N}$ with $n\geq n(\varepsilon)$, being $Q^*$ an optimal suitable solutions of problem BL, $\bm{Q}^*=((k_1^*,l_1^*),...,(k_{\rho}^*,l_{\rho}^*))$ an optimal suitable solutions of problem DBL$(n)$, and $\bar{Q}=(q_{(k_1^*,l_1^*)},...,q_{(k_{\rho}^*,l_{\rho}^*)})$ the suitable solution of problem BL codified by $\bm{Q}^*$.
\end{Thm}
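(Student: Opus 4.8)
The plan is to reduce the whole statement to two ingredients: a uniform approximation estimate comparing the discrete objective $\bm{\mathcal{F}}(\bm{Q},n)$ with the continuous objective $\mathcal{F}(Q)$ at the codified solution $Q$, and a feasibility/density argument that approximates the continuous optimum $Q^*$ by a discrete suitable solution. First I would combine Lemmas \ref{convergenceI}, \ref{convergenceC} and \ref{convergenceL}. Since both $\mathcal{F}$ and $\bm{\mathcal{F}}(\cdot,n)$ are sums of the same $2\rho+1$ terms (the $\rho$ installation terms, the $\rho$ congestion terms, and the lost-demand term), and each pair of corresponding terms is uniformly $\bigl(\varepsilon/(2\rho+1)\bigr)$-close for $n$ large, uniformly over $\bm{Q}\in\bm{\Gamma}(n)$, the triangle inequality yields: for every $\varepsilon'>0$ there is $n_1\in\mathbb{N}$ with
$$\left|\mathcal{F}(Q)-\bm{\mathcal{F}}(\bm{Q},n)\right|<\varepsilon'\quad\text{for all }\bm{Q}\in\bm{\Gamma}(n)\text{ and all }n\ge n_1,$$
where $Q=(q_{(k_1,l_1)},\dots,q_{(k_\rho,l_\rho)})\in\Gamma$ denotes the suitable solution of BL codified by $\bm{Q}$.

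The easy (lower) direction comes next. Applying the uniform estimate to $\bm{Q}^*$ gives $\left|\mathcal{F}(\bar Q)-\bm{\mathcal{F}}(\bm{Q}^*,n)\right|<\varepsilon'$, where $\bar Q\in\Gamma$ is codified by $\bm{Q}^*$. As $Q^*$ is optimal for problem BL and $\bar Q\in\Gamma$, we have $\mathcal{F}(\bar Q)\ge\mathcal{F}(Q^*)$, whence $\bm{\mathcal{F}}(\bm{Q}^*,n)>\mathcal{F}(\bar Q)-\varepsilon'\ge\mathcal{F}(Q^*)-\varepsilon'$. This bounds the discrete optimal value from below by the continuous one.

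For the opposite (upper) direction, and here the standing hypothesis enters, I would build a discrete feasible solution mimicking $Q^*$. By hypothesis, for every $\epsilon>0$ there is $\tilde Q=(\tilde q_1,\dots,\tilde q_\rho)\in\mathcal{B}_{\infty}(Q^*,\epsilon)\cap\Gamma$ whose facilities are separated from $\partial\Omega$ and pairwise separated; since each facility is compact, these separations hold with a strictly positive margin, so $P_i^{\tilde q_i}\subseteq\text{int}(\Omega)$ and $P_i^{\tilde q_i}\cap P_j^{\tilde q_j}=\emptyset$ at positive distance. For $n$ large, $\kappa(n)$ is small and the cell centers are dense, so one may pick a cell $(k_i',l_i')$ with $q_{(k_i',l_i')}$ within $\kappa(n)$ of $\tilde q_i$ for each $i$; the positive margins then force the resulting $\bm{Q}'$ to satisfy $P_i^{q_{(k_i',l_i')}}\subseteq\Omega$ and $\bm{P}_i^{(k_i',l_i')}\cap\bm{P}_j^{(k_j',l_j')}=\emptyset$, hence $\bm{Q}'\in\bm{\Gamma}(n)$, while its codified solution $Q'$ lies within $\epsilon+\kappa(n)$ of $Q^*$. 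Using the continuity of $\mathcal{F}$ (Theorem \ref{existenceThm}), the uniform estimate, and the optimality of $\bm{Q}^*$ in DBL$(n)$, we get $\bm{\mathcal{F}}(\bm{Q}^*,n)\le\bm{\mathcal{F}}(\bm{Q}',n)<\mathcal{F}(Q')+\varepsilon'<\mathcal{F}(Q^*)+\varepsilon''$, after choosing $\epsilon$ and $n$ so that $\left|\mathcal{F}(Q')-\mathcal{F}(Q^*)\right|$ is small. Combining the lower and upper bounds proves item 1. Item 2 then follows from $\left|\mathcal{F}(Q^*)-\mathcal{F}(\bar Q)\right|\le\left|\mathcal{F}(Q^*)-\bm{\mathcal{F}}(\bm{Q}^*,n)\right|+\left|\bm{\mathcal{F}}(\bm{Q}^*,n)-\mathcal{F}(\bar Q)\right|$ together with item 1 and the estimate of the easy direction, after rescaling the tolerances.

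The hard part will be the upper-direction construction, namely guaranteeing that the approximating discrete solution is genuinely feasible for DBL$(n)$. Because the grids $\{G(n)\}$ are prescribed in advance, one cannot place a cell center exactly at $\tilde q_i$; instead one must exploit the positive separation margins granted by the hypothesis to show that snapping the root points to the nearest cell centers preserves both containment of each facility in $\Omega$ and non-overlap of the cell facilities, for all sufficiently fine grids. Once this is secured, the remaining steps are routine consequences of the three convergence lemmas and the continuity of $\mathcal{F}$.
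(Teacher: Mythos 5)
Your proposal is correct and follows essentially the same route as the paper: both combine Lemmas \ref{convergenceI}--\ref{convergenceL} into a uniform estimate $\left|\mathcal{F}(Q)-\bm{\mathcal{F}}(\bm{Q},n)\right|$ over $\bm{\Gamma}(n)$, use the separation hypothesis plus continuity of $\mathcal{F}$ (from the proof of Theorem \ref{existenceThm}) to build a feasible discrete solution near $Q^*$ by placing each root point in the cell containing $\tilde{q}_i$, and then sandwich $\bm{\mathcal{F}}(\bm{Q}^*,n)$ between $\mathcal{F}(Q^*)\pm O(\varepsilon)$ using optimality at both levels, with item 2 following by the triangle inequality. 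The only differences are presentational (the paper splits the final bound into two cases depending on the sign of $\mathcal{F}(Q^*)-\bm{\mathcal{F}}(\bm{Q}^*,n)$, while you state the two one-sided bounds directly), and your explicit justification that the positive separation margins survive the snap-to-grid step is in fact more detailed than the paper's ``it is not difficult to see'' at that point.
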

\begin{proof}
From Lemma \ref{convergenceI}, Lemma \ref{convergenceC} and Lemma \ref{convergenceL} is derived that there exists $\tilde{n}\in\mathbb{N}$ such that $\left| \mathcal{F}(Q) - \bm{\mathcal{F}}(\bm{Q},n)\right| < \varepsilon / 4$,
for all $\bm{Q}=((k_1,l_1),...,(k_{\rho},l_{\rho}))\in \bm{\Gamma}(n)$, being $Q=(q_{(k_1,l_1)},...,q_{(k_{\rho},l_{\rho})})$, and all $n\in \mathbb{N}$ with $n\geq \tilde{n}$.

Due to $\mathcal{F}$ is continuous on $\Gamma$ as it was been shown in the proof of Theorem \ref{existenceThm},  there exists $\epsilon > 0$ such that, if $Q\in \mathcal{B}_{\infty}(Q^*,\epsilon) \cap \Gamma$, then $\left| \mathcal{F}(Q^*) - \mathcal{F}(Q)\right| < \varepsilon/ 4$. Moreover, by hypothesis, there exists $\tilde{Q}=(\tilde{q}_1,...,\tilde{q}_{\rho})\in \mathcal{B}_{\infty}(Q^*,\epsilon)\cap \Gamma$ such that $P_i^{\tilde{q}_i}\cap \partial \Omega = \emptyset$ for all $i\in \{1,...,\rho\}$ and $P_i^{\tilde{q}_i}\cap P_j^{\tilde{q}_j} = \emptyset$ for all $i,j\in \{1,...,\rho\}$ with $i\not=j$. It is not difficult to see that then there exists $\breve{n}\in \mathbb{N}$ for which $\tilde{q}_i\in (k_i,l_i)$, for each $i\in\{1,...,\rho\}$, for some $\bm{Q}=((k_1,l_1),...,(k_{\rho},l_{\rho}))\in \bm{\Gamma}(\breve{n})$. Moreover, note that, for all $n\in \mathbb{N}$ with $n\geq \breve{n}$,  there always exists $\bm{Q}=((k_1,l_1),...,(k_{\rho},l_{\rho}))\in \bm{\Gamma}(n)$ such that $\tilde{q}_i\in (k_i,l_i)$ for each $i\in\{1,...,\rho\}$. Using the continuity of  $\mathcal{F}$ on $\Gamma$ and taking into account that  $\{ G(n) \}_{n\in \mathbb{N}}$ is a sequence of successively  refined grids, it can be proven that there exists $\bar{n}\in\mathbb{N}$ with $\bar{n}\geq \breve{n}$ such that $\left| \mathcal{F}(\tilde{Q}) - \mathcal{F}(\breve{Q})\right| < \varepsilon/ 4$, being $\breve{Q}=(q_{(\tilde{k}_1,\tilde{l}_1)},...,q_{(\tilde{k}_{\rho},\tilde{l}_{\rho})})\in\Gamma$ the suitable solution of problem BL codified  by the suitable solution $\tilde{\bm{Q}}=((\tilde{k}_1,\tilde{l}_1),...,(\tilde{k}_{\rho},\tilde{l}_{\rho}))\in \bm{\Gamma}(n)$ of problem DBL$(n)$ verifying $\tilde{q}_i\in (\tilde{k}_i,\tilde{l}_i)$ for each $i\in\{1,...,\rho\}$, for all $n\geq \bar{n}$.

Let $n(\varepsilon) = \max\{ \tilde{n}, \bar{n}\}$. Take $n\in \mathbb{N}$ with $n\geq n(\varepsilon)$ and let $\breve{Q}=(q_{(\tilde{k}_1,\tilde{l}_1)},...,q_{(\tilde{k}_{\rho},\tilde{l}_{\rho})})\in\Gamma$ the suitable solution of problem BL codified  by the suitable solution $\tilde{\bm{Q}}=((\tilde{k}_1,\tilde{l}_1),...,(\tilde{k}_{\rho},\tilde{l}_{\rho}))\in \bm{\Gamma}(n)$ of problem DBL$(n)$ verifying $\tilde{q}_i\in (\tilde{k}_i,\tilde{l}_i)$ for each $i\in\{1,...,\rho\}$. From the reasoning above, $\left| \mathcal{F}(Q^*) - \bm{\mathcal{F}}(\tilde{\bm{Q}},n)\right| < \dfrac{3 \varepsilon}{4}$. If $\bm{Q}^*=((k_1^*,l_1^*),...,(k_{\rho}^*,l_{\rho}^*))\in \bm{\Gamma}(n)$ is the optimal suitable solution solution of problem DBL(n), then $\left| \bm{\mathcal{F}}(\bm{Q}^*,n) - \mathcal{F}(\bar{Q}) \right| < \varepsilon / 4$, being $\bar{Q}=(q_{(k_1^*,l_1^*)},...,q_{(k_{\rho}^*,l_{\rho}^*)})$. Now, observe that, if $\bm{\mathcal{F}}(\bm{Q}^*,n) \leq \mathcal{F}(Q^*)$, then  $\bm{\mathcal{F}}(\bm{Q}^*,n) \leq \mathcal{F}(Q^*) \leq \mathcal{F}(\bar{Q})$, which implies $\left| \mathcal{F}(Q^*) - \bm{\mathcal{F}}(\bm{Q}^*,n)\right| < \varepsilon / 4 < \varepsilon$. On the other hand, if $ \mathcal{F}(Q^*) \leq \bm{\mathcal{F}}(\bm{Q}^*,n)$, then  $ \mathcal{F}(Q^*) < \bm{\mathcal{F}}(\bm{Q}^*,n) < \bm{\mathcal{F}}(\tilde{\bm{Q}},n)$, which implies $\left| \mathcal{F}(Q^*) - \bm{\mathcal{F}}(\bm{Q}^*,n)\right| < \dfrac{3 \varepsilon}{4}< \varepsilon$. Finally, taking into account the above, it is not difficult to see that $\left| \mathcal{F}(Q^*) - \mathcal{F}(\bar{Q})\right| < \varepsilon$.\qed
\end{proof}

The theorem above proves the convergence of the sequence of solutions for the discrete approximation to the optimal objective value of problem BL.

\section{Solution approaches}

Section 3 provides a methodology to solve problem BL by sequences of discrete problems DBL that converge to the optimal objective value. However, solving each one of those discrete approximations is an issue by itself, but, as we will see in the following, we propose two methods to solve the problem DBL: one of them is exact and it consists of a mixed-integer linear programming (MILP) model and the other one is a GRASP heuristic (see \cite{graspRF}).

\subsection{A mathematical programming formulation}

This section provides a valid MILP formulation for problem DBL for a fixed grid $G$.

In order to give a valid formulation for problem DBL we need to determine the sets and parameters that charge the model with the necessary information of the problem. At this point we remark that the overall global computation time to get an optimal solution of problem DBL is the computing time to obtain the input sets and parameters of the model plus the computing time required to reach the optimal solution. Our goal is to get a solution time as small as possible, so that we have to properly balance both times. On the one hand, if we do not preprocess adequately the information from the elements of the problem, then the model will have to work too much to obtain that information and, as it is known, this is not desirable since MILP models can be really hard to solve. On the other hand, if we want to fully preprocess the elements of the problem to do the model work less, we will have to do different operations over the set of cells of $\bm{\Omega}$. Since we are interesting in $|\bm{\Omega}|$ (number of cells of $\bm{\Omega}$) to be large (to better approximate problem BL by problem DBL), the time to obtain the initial information sets and CPU memory consumption can increase dramatically.

We use the following sets and parameters to build our MILP model:
\begin{enumerate}[-]
\item $\bm{\Omega}_i$: set of candidates for feasible location of dimensional facility $P_i$ in problem DBL, i.e., the set of cells $(k,l)\in\bm{\Omega}$ such that $P_i^{q_{(k,l)}}\subseteq \Omega$. This set is defined for each $i\in \{1,...,\rho\}$.

\item $\bm{E}_{rs}^i$: set of cells $(k,l)$ in $\bm{\Omega}_i$ verifying $(r,s)\in \bm{P}_i^{(k,l)}$. We define this set for each $(r,s)\in \bm{\Omega}$ and $i\in \{1,...,\rho\}$.

\item $w^D_{rs}$: the demand density in the cell $(r,s)$. This parameter is defined for each $(r,s)\in \bm{\Omega}$.

\item $w^B_{rs}$: the base installation cost density in the cell $(r,s)$. This parameter is defined for each $(r,s)\in \bm{\Omega}$.

\item $u_{rs,kl}^{i}$: $u(q_{(r,s)},P_i^{q_{(k,l)}})$, i.e., the utility in problem DBL obtained from any point in $(r,s)$ with respect to the dimensional facility $P_i$ when its root point is located at the center of the cell $(k,l)$. If $(r,s)\in \bm{P}^{(k,l)}_i$ we take $u_{rs,kl}^{i}=-a_i$ (the reason of this choice will be easily understood when the model is presented). We define this parameter for each $(r,s)\in \bm{\Omega}$, $i\in\{1,...,\rho\}$ and $(k,l)\in \bm{\Omega}_i$.
\end{enumerate}

We now analyze the asymptotic computational complexity for obtaining these sets and parameters assuming $\bm{\Omega}$ has already been determined. For each $i\in \{1,...,\rho\}$, suppose that $\mathcal{O}(f_1(P_i,\Omega))$ is the asymptotic computational complexity bound for testing if the dimensional facility $P_i$ with its root point fixed at a point $q_i\in \mathbb{R}^2$ satisfies $P_i^{q_i}\subseteq \Omega$. Then, obtaining $\bm{\Omega}_i$ can be done in $\mathcal{O}(|\bm{\Omega}|f_1(P_i,\Omega))$ (one check for each point $q_{(r,s)}$ with $(r,s)\in \bm{\Omega}$). Thus,  the complexity to get all the sets $\{\bm{\Omega}_1,...,\bm{\Omega}_{\rho}\}$ is bounded by $\mathcal{O}(|\bm{\Omega}|\sum_{i=1}^{\rho}f_1(P_i,\Omega))$.

For each $i\in \{1,...,\rho\}$, once $\bm{\Omega}_i$ is computed, take $(k,l)\in \bm{\Omega}_i$. For each $(r,s)\in \bm{\Omega}$ check if $\text{int}((r,s))\cap \text{int}(P_i^{q_{(k,l)}})\not= \emptyset$ and let $\mathcal{O}(f_2(P_i))$ be the time required to do that test for the cell. If $\text{int}((r,s))\cap \text{int}(P_i^{q_{(k,l)}})\not= \emptyset$, add $(k,l)$ to $\bm{E}^i_{rs}$ and take $u^i_{rs,kl}=-a_i$. Otherwise, compute $u(q_{(r,s)},P_i^{q_{(k,l)}})$ and take $u^i_{rs,kl}=u(q_{(r,s)},P_i^{q_{(k,l)}})$. Let $\mathcal{O}(f_3(P_i))$ be the complexity for computing the utility $u(q,P_i^{q_i})$ for any $q,q_i\in \mathbb{R}^2$. Hence, the asymptotic computational complexity of obtaining all the sets $\bm{E}^i_{rs}$ and all the paremeters $u^i_{rs,kl}$ can be bounded by   $\mathcal{O}(|\bm{\Omega}|^2\sum_{i=1}^{\rho} [f_2(P_i) + f_3(P_i)])$ ($|\bm{\Omega}_i|$ is at most $|\bm{\Omega}|$).

As for the parameters $w^D_{rs}$, to obtain all of them it is necessary to compute  $|\bm{\Omega}|$ integrals. The same can be said for parameters $w^B_{rs}$.

The above analysis shows that all the sets and parameters which we use to define the MILP model can be obtained in a ``reasonable'' computation time. The space requirements are also efficient and can be bounded above by: $\sum_{i=1}^{\rho} |\bm{\Omega}_i|\leq \rho|\bm{\Omega}|$, $ \sum_{i=1}^{\rho}  \sum_{(r,s)\in \bm{\Omega}}|\bm{E}_{rs}^i|\leq  \rho|\bm{\Omega}|^2$, there are $|\bm{\Omega}|$ constants $w^D_{rs}$, the same number of parameters  $w^B_{rs}$, and the cardinality of $u^i_{rs,kl}$ is at most  $\rho|\bm{\Omega}|^2$.


Next, we describe the MILP model. Recall that any non-decreasing, bounded, continuous, piecewise linear function can be modeled with a MILP formulation, see, for example, \cite{PLFourer}. Below we represent by $\overline{I_1^{\text{PL}}}(\omega_1^I),...,\overline{I_{\rho}^{\text{PL}}}(\omega_{\rho}^I),\overline{C_1^{\text{PL}}}(\omega_1^C),...,\overline{C_{\rho}^{\text{PL}}}(\omega_{\rho}^C),\overline{L^{\text{PL}}}(\omega^L)$ the linearization of the functions $I_1^{\text{PL}}(\omega_1^I),...,I_{\rho}^{\text{PL}}(\omega_{\rho}^I),C_1^{\text{PL}}(\omega_{1}^C),...,C_{\rho}^{\text{PL}}(\omega_{\rho}^C),L^{\text{PL}}(\omega^L)$
in the objective function of a suitable MILP formulation, and by $SCDVPL$ the Set of Constraints and the Domain declaration of the decision Variables involved in the model that together makes the representation of the Piecewise Linear functions to be correct.

In order to understand the model, we define the following families of decision variables. Binary variable $\theta^i_{kl}$ is a location variable: it takes the value $1$ if the root point $p_i$ of the dimensional facility $P_i$ is located at the center of the cell $(k,l)\in \bm{\Omega}_i$, and $0$ otherwise, for each $i\in \{1,...,\rho\}$. Binary variable $\tau^i_{rs}$ is an allocation variable and it takes the value $1$ if customers in the cell $(r,s)\in \bm{\Omega}$ are served by the dimensional facility $P_i$, and $0$ otherwise, for each $i\in \{1,...,\rho\}$. Variable $\varphi_{rs}$ will assume the value of the utility $u_G$ obtained from the cell $(r,s) \in \bm{\Omega}$ when it is assigned to its dimensional facility in a solution of problem DBL. We point out that the facility assigned to a cell must be the one given by a solution of the corresponding discretized lower level problem. Variable $\varphi_{rs}$ will be $0$ if $(r,s)$ is contained in a cell facility, for each $(r,s)\in \bm{\Omega}$.

\begin{Thm} Problem DBL is equivalent to the following MILP problem:
\begin{small}
\begin{align}
\min &  \,\,\,\,\, \displaystyle \sum_{i=1}^{\rho} \overline{I_i^{\text{PL}}}\left( \sum_{(r,s)\in \bm{\Omega}}  \sum_{(k,l)\in \bm{E}^i_{rs}} w^B_{rs}  \theta^i_{kl} \right) + \sum_{i=1}^{\rho} \overline{C_i^{\text{PL}}}\left( \sum_{(r,s)\in \bm{\Omega}} w^D_{rs} \tau^i_{rs} \right) \label{A1}\\
&  \,\,\,\,\, \displaystyle +  \overline{L^{\text{PL}}}\left(\sum_{(r,s)\in \bm{\Omega}} w^D_{rs} \left[1- \sum_{i=1}^{\rho}\tau^i_{rs}\right]\right) \nonumber \\
\text{s.t.}   &  \,\,\,\,\, SCDVPL, \label{A2}\\
&  \,\,\,\,\, \sum_{(k,l)\in \bm{\Omega}_i} \theta^i_{kl} = 1, &&  \forall i\in\{1,...,\rho\},  \label{A3}\\
            &  \,\,\,\,\, \sum_{i=1}^{\rho} \tau^i_{rs} + \sum_{i=1}^{\rho}\sum_{(k,l)\in \bm{E}^i_{rs}} \theta^i_{kl}=1, && \forall (r,s)\in \bm{\Omega}, \label{A4}\\
            &  \,\,\,\,\, \sum_{j=1}^{\rho} a_j w^D_{rs} \tau_{rs}^j + w^D_{rs} \varphi_{rs} \leq a_i w^D_{rs} + \sum_{(k,l)\in \bm{\Omega}_i} w^D_{rs} u^i_{rs,kl} \theta^i_{kl}, && \forall (r,s)\in \bm{\Omega},i\in\{1,...,\rho\}, \label{A5}\\
            &  \,\,\,\,\, \sum_{(k,l)\in \bm{\Omega}_i} u^i_{rs,kl} \theta^i_{kl}-M(1-\tau^i_{rs}) \leq \varphi_{rs} \leq \sum_{(k,l)\in \bm{\Omega}_i} u^i_{rs,kl} \theta^i_{kl}+M(1-\tau^i_{rs}), && \forall (r,s)\in \bm{\Omega},i\in\{1,...,\rho\}, \label{A6}\\
            &  \,\,\,\,\, \theta^i_{kl}\in \{0,1\}, && \forall i\in\{1,...,\rho\},(k,l)\in \bm{\Omega}_i, \label{A7}\\
            &  \,\,\,\,\, \tau^i_{rs}\in \{0,1\}, && \forall (r,s)\in \bm{\Omega},i\in\{1,...,\rho\}, \label{A8}\\
            &  \,\,\,\,\, \varphi_{rs}\geq 0, && \forall (r,s)\in\bm{\Omega}, \label{A9}
\end{align}
\end{small}
\noindent where $M\gg 0$ is a constant large enough.
\end{Thm}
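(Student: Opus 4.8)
The plan is to exhibit a value-preserving correspondence between the feasible region of the proposed MILP and the set of pairs $(\bm{Q},\bm{A}(\bm{Q}))$ formed by a suitable solution $\bm{Q}=((k_1,l_1),\dots,(k_\rho,l_\rho))\in\bm{\Gamma}$ together with an optimal partition $\bm{A}(\bm{Q})$ of the lower level problem DLL$(\bm{Q})$. The reading of the variables is the natural one: $\theta^i_{kl}=1$ encodes that the root point of $P_i$ sits at the center of $(k,l)$, $\tau^i_{rs}=1$ encodes that cell $(r,s)$ is served by $P_i$, and $\varphi_{rs}$ records the utility charged to $(r,s)$ by the facility serving it. I would first show that this reading turns every feasible MILP point into such a pair and, conversely, that every such pair yields a feasible MILP point, and then verify that the objective (\ref{A1}) evaluates to $\bm{\mathcal{F}}(\bm{Q})$ on matched points. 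Since DBL is the optimistic bilevel problem of minimizing $\bm{\mathcal{F}}$ over $\bm{Q}\in\bm{\Gamma}$ subject to $\bm{A}(\bm{Q})$ being a best reply, while the MILP jointly minimizes over $\theta$ and $\tau$, the two optimal values and their optimizers will then coincide.

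For the feasibility correspondence I would argue as follows. Constraint (\ref{A3}) together with the restriction of the support of $\theta^i$ to $\bm{\Omega}_i$ selects, for each $i$, exactly one location with $P_i^{q_{(k_i,l_i)}}\subseteq\Omega$, reproducing the factor $\bm{\Omega}_1\times\dots\times\bm{\Omega}_\rho$ in the description of $\bm{\Gamma}$. Constraint (\ref{A4}) states that every $(r,s)\in\bm{\Omega}$ is covered by exactly one of two mutually exclusive events: it is allocated to a single facility, or it lies in exactly one cell facility (note that, once $\theta^i_{kl}=1$, the condition $(k,l)\in\bm{E}^i_{rs}$ is by definition equivalent to $(r,s)\in\bm{P}_i^{(k_i,l_i)}$). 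The ``at most one cell facility'' part forces $\bm{P}_i^{(k_i,l_i)}\cap\bm{P}_j^{(k_j,l_j)}=\emptyset$ for $i\neq j$, which is exactly the remaining defining condition of $\bm{\Gamma}$; the ``allocated to a single facility'' part guarantees that the $\tau$'s describe a genuine partition of $\bm{\Omega}(\bm{Q})$.

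The heart of the argument is that (\ref{A5}) and (\ref{A6}) encode the best-reply condition of DLL$(\bm{Q})$. Reading (\ref{A6}) with the big-$M$ terms, when $\tau^i_{rs}=1$ it pins $\varphi_{rs}=\sum_{(k,l)\in\bm{\Omega}_i}u^i_{rs,kl}\theta^i_{kl}=u^i_{rs,k_il_i}=u(q_{(r,s)},P_i^{q_{(k_i,l_i)}})$, while for every $i$ with $\tau^i_{rs}=0$ the constraint becomes slack. Substituting this into (\ref{A5}) makes its left-hand side equal to the assignment cost $a_{i_0}w^D_{rs}+w^D_{rs}u(q_{(r,s)},P_{i_0}^{q_{(k_{i_0},l_{i_0})}})$ of $(r,s)$ to its serving facility $i_0$, and its right-hand side the analogous cost for an arbitrary $i$; hence (\ref{A5}) says precisely that each served cell is allotted to a minimum-cost facility, which by the separability of DLL$(\bm{Q})$ is the best reply. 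The convention $u^i_{rs,kl}=-a_i$ for $(r,s)\in\bm{P}_i^{(k,l)}$ is what makes a lost cell behave correctly: for such a cell all $\tau^i_{rs}=0$, and evaluating (\ref{A5}) at the index of the covering cell facility gives $w^D_{rs}\varphi_{rs}\le 0$, which together with (\ref{A9}) forces $\varphi_{rs}=0$ whenever $w^D_{rs}>0$.

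Finally I would match the objective term by term. Using the equivalence $(k_i,l_i)\in\bm{E}^i_{rs}\Leftrightarrow (r,s)\in\bm{P}_i^{(k_i,l_i)}$, the installation argument collapses to $\sum_{(r,s)\in\bm{P}_i^{(k_i,l_i)}}w^B_{rs}$; the congestion argument is $\sum_{(r,s)\in\bm{A}_i(\bm{Q})}w^D_{rs}$ since $\tau^i$ is the indicator of $\bm{A}_i(\bm{Q})$; and, invoking (\ref{A4}) in the form $1-\sum_{i}\tau^i_{rs}=\sum_{i}\sum_{(k,l)\in\bm{E}^i_{rs}}\theta^i_{kl}$, the lost-demand argument equals $\sum_{i=1}^\rho\sum_{(r,s)\in\bm{P}_i^{(k_i,l_i)}}w^D_{rs}$. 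Because the block $SCDVPL$ in (\ref{A2}) is assumed to represent the piecewise linear $I_i^{\text{PL}},C_i^{\text{PL}},L^{\text{PL}}$ faithfully through $\overline{I_i^{\text{PL}}},\overline{C_i^{\text{PL}}},\overline{L^{\text{PL}}}$, these three matched arguments reproduce exactly the three summands of $\bm{\mathcal{F}}(\bm{Q})$. The step I expect to be most delicate is the best-reply encoding of the previous paragraph: one has to check carefully that (\ref{A5})--(\ref{A6}) neither over- nor under-constrain $\tau$, that the $-a_i$ convention really forces $\varphi_{rs}=0$ on lost cells, and that the joint minimization selects, among tied best replies, the partition minimizing $\bm{\mathcal{F}}$, so that the optimistic bilevel value of DBL is recovered rather than merely bounded.
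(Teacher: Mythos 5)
Your proposal is correct and follows essentially the same route as the paper's own proof: a constraint-by-constraint verification that (\ref{A3})--(\ref{A4}) encode membership in $\bm{\Gamma}$ together with the partition structure, that (\ref{A5})--(\ref{A6}) combined with the convention $u^i_{rs,kl}=-a_i$ and $\varphi_{rs}\geq 0$ pin $\varphi_{rs}$ to the serving utility and force $\varphi_{rs}=0$ on lost cells, thereby encoding the best reply of DLL$(\bm{Q})$, followed by the same term-by-term identification of the objective (\ref{A1}) with $\bm{\mathcal{F}}(\bm{Q})$. If anything, you are slightly more explicit than the paper on two points it leaves implicit, namely the caveat that $\varphi_{rs}=0$ is only forced when $w^D_{rs}>0$ (harmless, since zero-demand cells do not affect the objective) and the remark that the joint minimization over $\theta$ and $\tau$ resolves lower-level ties optimistically, which matches the semantics of the constraint $\widehat{\bm{A}}(\bm{Q})\in\arg\min$ in problem DBL.
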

\begin{proof}
First of all, note that the domain of the decision variables is stated in (\ref{A7})-(\ref{A9}).

Suppose that $\bm{Q}=((k_1,l_1),...,(k_{\rho},l_{\rho}))\in \bm{\Gamma}$ is the suitable solution of problem DBL given by the formulation (\ref{A1})-(\ref{A9}). Then, for each $i\in\{1,...,\rho\}$, $\theta_{k_il_i}^i=1$ and $\theta_{kl}^i=0$ for all $(k,l)\in \bm{\Omega}_i$ other than $(k_i,l_i)$, so
$$\sum_{(r,s)\in \bm{\Omega}}  \sum_{(k,l)\in \bm{E}^i_{rs}} w^B_{rs}  \theta^i_{kl} = \sum_{(r,s)\in\bm{P}_i^{(k_i,l_i)}} w_{rs}^B.$$
Moreover, if partition of $\bm{\Omega}(\bm{Q})$ in problem (\ref{A1})-(\ref{A9}) is done according to $\bm{A}(\bm{Q})\in \bm{\mathcal{A}}_{\rho}(\bm{Q})$, it follows that
$$ \sum_{(r,s)\in \bm{\Omega}} w^D_{rs} \tau^i_{rs} = \sum_{(r,s)\in \bm{A}_i(\bm{Q})} w^D_{rs}, $$
since $\tau^i_{rs}$ will be $1$ iff $(r,s)\in \bm{A}_i(\bm{Q})$ for each $(r,s)\in \bm{\Omega}$. This last condition also implies that, for each $(r,s)\in \bm{\Omega}$, $\tau^i_{rs}=0$ for all $i\in \{1,...,\rho\}$ iff $(r,s)\in \bm{\Omega} \setminus \bm{\Omega}(\bm{Q})=\{\bm{P}_1^{(k_1,l_1)}\cup ... \cup \bm{P}_{\rho}^{(k_{\rho},l_{\rho})}\}$, therefore
$$\displaystyle \sum_{(r,s)\in \bm{\Omega}} w^D_{rs} \left[1- \sum_{i=1}^{\rho}\tau^i_{rs}\right] = \sum_{(r,s)\in \bigcup_{i=1}^{\rho} \bm{P}_i^{(k_i,l_i)} } w^D_{rs} = \sum_{i=1}^{\rho} \sum_{(r,s)\in\bm{P}_i^{(k_i,l_i)}} w_{rs}^D.$$
The objective function (\ref{A1}) of the problem (\ref{A1})-(\ref{A9}) minimizes the same function as in problem DBL, given that $\overline{I_1^{\text{PL}}},...,\overline{I_{\rho}^{\text{PL}}},\overline{C_1^{\text{PL}}},...,\overline{C_{\rho}^{\text{PL}}},\overline{L^{\text{PL}}}$ and $SCDVPL$ in (\ref{A2}) are a correct representation, respectively, of $I_1^{\text{PL}},...,I_{\rho}^{\text{PL}},C_1^{\text{PL}},...,C_{\rho}^{\text{PL}},L^{\text{PL}}$. It remains to see that the solution given by the formulation (\ref{A1})-(\ref{A9}) is a suitable solution $\bm{Q}\in\bm{\Gamma}$ of problem DBL and that it provides an optimal partition $\bm{A}(\bm{Q})$ of $\bm{\Omega}(\bm{Q})$ in the corresponding discretized lower level problem.

\indent Constraints (\ref{A3}) state that the root point $p_i$ of the dimensional facility $P_i$ has to be set in one of the cells of the set $\bm{\Omega}_i$ of candidates for feasible location of the dimensional facility $P_i$ in problem DBL, for each $i\in \{1,...,\rho\}$.

\indent With constraints (\ref{A4}), several conditions are imposed. On the one hand,  (\ref{A4}) implies $\sum_{i=1}^{\rho} \tau^i_{rs} \leq 1$, so demand of the cell $(r,s)\in \bm{\Omega}$ can not be satisfied by more than one dimensional facility. On the other hand, implication $\sum_{i=1}^{\rho}\sum_{(k,l)\in \bm{E}^i_{rs}} \theta^i_{kl}\leq 1$ of (\ref{A4}) avoids intersections amongs the interiors of the cell facilities located according to the variables $\theta^i_{kl}$. Suppose that the root points of the dimensional facilities $P_i$ and $P_j$ have been fixed at the centers of the cells $(k_i,l_i)\in \bm{\Omega}_i$ and $(k_j,l_j)\in \bm{\Omega}_j$ respectively, so $\theta^i_{k_il_i}=1$ and $\theta^j_{k_jl_j}=1$, $i,j\in \{1,...,\rho\}$, $i\not=j$. If $\bm{P}^{(k_i,l_i)}_i\cap \bm{P}^{(k_j,l_j)}_j\not= \emptyset$, then there exists $(r,s)\in \bm{\Omega}$ such that $(r,s)\in \bm{P}^{(k_i,l_i)}_i$ and $(r,s)\in \bm{P}^{(k_j,l_j)}_j$, and therefore $(k_i,l_i)\in \bm{E}^i_{rs}$ and $(k_j,l_j)\in \bm{E}^j_{rs}$. This implies $\sum_{i=1}^{\rho}\sum_{(k,l)\in \bm{E}^i_{rs}} \theta^i_{kl}\geq 2$ which contradicts implication $\sum_{i=1}^{\rho}\sum_{(k,l)\in \bm{E}^i_{rs}} \theta^i_{kl}\leq 1$ of (\ref{A4}). Also, constraints (\ref{A4}) force demand of cell $(r,s)\in\bm{\Omega}$ to be satisfied by one dimensional facilty if $(r,s)$ does not belong to any cell facility ($\sum_{i=1}^{\rho} \tau^i_{rs} = 1$ and $\sum_{i=1}^{\rho}\sum_{(k,l)\in \bm{E}^i_{rs}} \theta^i_{kl} = 0$), and to belong to a cell facility if its demand is not satisfied by any dimensional facility ($\sum_{i=1}^{\rho} \tau^i_{rs} = 0$ and $\sum_{i=1}^{\rho}\sum_{(k,l)\in \bm{E}^i_{rs}} \theta^i_{kl} = 1$). So, constraints (\ref{A5})-(\ref{A6}) ensure the feasible location of the dimensional facilities and makes a distinction between demand cells and cells contained in the cell facilities.

\indent The correct allocation of demand cells to dimensional facilities according to the corresponding discretized lower level problem is achieved with constraints (\ref{A5}) and (\ref{A6}). Indeed, suppose that, for $(r,s)\in \bm{\Omega}$, $\tau^{\tilde{i}}_{rs}=1$ for some $\tilde{i}\in\{1,...,\rho\}$. Hence, by constraints (\ref{A4}), $\tau^{j}_{rs}=0$ for all  $j\in \{1,...,\rho\}$ with $j\not= \tilde{i}$, and thus constraints (\ref{A5}) state that $a_{\tilde{i}} w^D_{rs} + w^D_{rs}\varphi_{rs} \leq a_i w^D_{rs} + \sum_{(k,l)\in \bm{\Omega}_i} w^D_{rs} u^i_{rs,kl} \theta^i_{kl}$ for all $i\in \{1,...,\rho\}$. Note that,  as $\sum_{(k,l)\in \bm{\Omega}_i}  \theta^i_{kl} = 1$  by constraints (\ref{A3}), hence $\sum_{(k,l)\in \bm{\Omega}_i}  \theta^i_{kl} =\theta_{k_i,l_i}^i=1$, and we know $(r,s)\notin \bm{P}_i^{(k_i,l_i)}$  due to constraints (\ref{A4}), then $\sum_{(k,l)\in \bm{\Omega}_i} w^D_{rs} u^i_{rs,kl} \theta^i_{kl} = u(q_{(r,s)}, P_i^{q_{(k_i,l_i)}})$, for each $i\in\{1,...,\rho\}$. So, if $\varphi_{rs}$ takes the value $u(q_{(r,s)},P_{\tilde{i}}^{q_{(k_{\tilde{i}},l_{\tilde{i}})}})$, constraints (\ref{A5}) impose that cell $(r,s)$ is assigned to the dimensional facility that provides the smallest cost in the discretized lower level problem. However, by the constraint of type (\ref{A6}) for $(r,s)$ and $i= \tilde{i}$, $\varphi_{rs} = \sum_{(k,l)\in \bm{\Omega}_{\tilde{i}}} w^D_{rs} u^{\tilde{i}}_{rs,kl} \theta^{\tilde{i}}_{kl} = u(q_{(r,s)},P_{\tilde{i}}^{q_{(k_{\tilde{i}},l_{\tilde{i}})}})$ as $M(1-\tau^{\tilde{i}}_{rs}) = 0$. Constraints of type (\ref{A6}) for $(r,s)$ when $i\not= \tilde{i}$ are satisfied trivially as $\sum_{(k,l)\in \bm{\Omega}_i} u^i_{rs,kl} \theta^i_{kl}-M \leq \varphi_{rs}$ and $\varphi_{rs} \leq \sum_{(k,l)\in \bm{\Omega}_i} u^{i}_{rs,kl} \theta^i_{kl}+M$ for $M\gg 0$ large enough.

Now, suppose that for $(r,s)\in \bm{\Omega}$, $\tau^{i}_{rs}=0$ for all $i\in\{1,...,\rho\}$. Then, constraints (\ref{A6}) for $(r,s)$ are satisfied trivially for $M\gg 0$ large enough. As  $\tau^{i}_{rs}=0$ for all $i\in\{1,...,\rho\}$, by constraints (\ref{A4}), we know $\theta_{k_{\tilde{i}}l_{\tilde{i}}}^{\tilde{i}}=1$ for one $\tilde{i}\in \{1,...,\rho\}$ and one $(k_{\tilde{i}},l_{\tilde{i}})\in\bm{\Omega}_{\tilde{i}}$, i.e., $(r,s)\in \bm{P}_{\tilde{i}}^{(k_{\tilde{i}},l_{\tilde{i}})}$. So, constraint of type (\ref{A5}) for $(r,s)$ and $i= \tilde{i}$ imposes $\sum_{j=1}^{\rho} a_j w^D_{rs} \tau_{rs}^j + w^D_{rs}\varphi_{rs} \leq a_{\tilde{i}} w^D_{rs} + \sum_{(k,l)\in \bm{\Omega}_{\tilde{i}}} w^D_{rs} u^{\tilde{i}}_{rs,kl} \theta^{\tilde{i}}_{kl}$. But  $\sum_{j=1}^{\rho} a_j w^D_{rs} \tau_{rs}^j = 0$ and $\sum_{(k,l)\in \bm{\Omega}_{\tilde{i}}} w^D_{rs} u^{\tilde{i}}_{rs,kl} \theta^{\tilde{i}}_{kl} = w^D_{rs} u^{\tilde{i}}_{rs,k_{\tilde{i}}l_{\tilde{i}}} = -w^D_{rs} a_{\tilde{i}}$, according to the definition of parameters $u^i_{rs,kl}$, therefore, $\varphi_{rs}$ has to be $0$. Constraints of type (\ref{A5}) for $(r,s)$ when $i\not= \tilde{i}$ are satisfied trivially as $\sum_{j=1}^{\rho} a_j w^D_{rs} \tau_{rs}^j + w^D_{rs}\varphi_{rs}=0$.

From the above discussion, constraints (\ref{A5})-(\ref{A6}) force the minimum  cost assignment of  cell $(r,s)$ to a dimensional facility in $\{P_1,...,P_{\rho}\}$ for each cell $(r,s)\in \bm{\Omega}$ that is not contained in any cell facility. Hence, the constrains imposed by the discretized lower level problem, in the constrained optimization problem  DBL, are satisfied  for  any feasible solution of the problem (\ref{A1})-(\ref{A9}). So we conclude that problem (\ref{A1})-(\ref{A9}) is equivalent to problem DBL. \qed
\end{proof}

\indent It is not difficult to see that $M=\max \{u^i_{rs,kl} : (r,s)\in \bm{\Omega},i\in\{1,...,\rho\},(k,l)\in \bm{\Omega}_i\}$ is the minimum value of $M$ that makes the above model (\ref{A1})-(\ref{A9}) to be correct.

\subsection{Heuristic method}

As mentioned above, problem DBL is NP-hard, therefore one can not expect to solve large instances with the MILP
formulation (\ref{A1})-(\ref{A9}) which has $2\rho |\bm{\Omega}|$ binary variables defined in (\ref{A7})-(\ref{A8}) plus the number of binary variables in (\ref{A2}) required to modelling the piecewise linear cost functions in (\ref{A1}). This makes the model difficult to solve, especially when the considered number of cells $|\bm{\Omega}|$ is large to better approximate problem BL. For this reason, we introduce an alternative heuristic algorithm to get  ``good/reasonable" feasible solutions of problem DBL for larger size instances.

\indent The algorithm proposed is a GRASP in which we can distinguish three modules. The first module GRASP\_DIMFAC is actually the GRASP, which uses the next two modules to build the final solution. From a location $(q_1,...,q_{\rho})\in \OOmega_1\times ... \times \OOmega_{\rho}$ of the root points of the closed sets $P_1,...,P_{\rho}$ (not necessarily feasible), the second module WAVE\_DIMFAC, which is a continuous wavefront algorithm, generates a random feasible solution $((k_1,l_1),...,(k_{\rho},l_{\rho}))\in\bm{\Gamma}$ of problem DBL. Finally, the third module GREEDY\_DIMFAC is a greedy algorithm that, given a feasible solution $((k_1,l_1),...,(k_{\rho},l_{\rho}))\in\bm{\Gamma}$ of problem DBL, locally searches for another feasible solution improving the objective value of the first one.

\indent In what follows, and for the sake of simplicity, we consider that $\Omega$, the closed sets $P_1,...,P_{\rho}$ and the grid $G$ are fixed. This implies that all the elements that are derived from them are also fixed.

\subsubsection{GRASP algorithm}

Before describing the GRASP, we observe the following. Given a suitable solution $\bm{Q}=((k_1,l_1),...,(k_{\rho},l_{\rho}))\in\bm{\Gamma}$ for problem DBL, computing its objective value can be done easily. This is due to the fact that for each cell $(r,s)\in\bm{\Omega}$, we can know if it is contained in a cell facility, and in which, or if it is a demand cell. If $(r,s)$ is a demand cell we also know to which dimensional facility it is assigned: the one with minimum assignment cost. In other words, $\bm{P}_1^{(k_1,l_1)}, ... ,\bm{P}_{\rho}^{(k_{\rho},l_{\rho})}$ and $\bm{A}(\bm{Q})=(\bm{A}_1(\bm{Q}),...,\bm{A}_\rho(\bm{Q}))$ can be easily obtained processing sequentially all the cells of $\bm{\Omega}$. So, obtained the above sets, we can compute $\bm{\mathcal{F}}(\bm{Q})$.

The above is correct except for the case in which two or more dimensional facilities provide the minimum assigment cost for a cell $(r,s)\in \bm{\Omega}$. In that case, as we are looking for a heuristic solution for problem DBL and we want to do this as fast as possible, we assign the cell $(r,s)$ to any of that dimensional facilities with minimum assignment cost.

A formal pseudocode of our GRASP is given in Algorithm \ref{algGRASP}.

\begin{algorithm}
  \caption{GRASP algorithm for problem DBL} \label{algGRASP}
  \begin{algorithmic}[]
  \State \textbf{PROCEDURE} GRASP\_DIMFAC
  \State \indent \textbf{STEP 1} Create a list $\Psi$ of $\psi\in \mathbb{N}$ suitable solutions for problem DBL as follows. For each $j\in\{1,...,\psi\}$, randomly generate points $q_1\in\OOmega_1,...,q_{\rho}\in\OOmega_{\rho}$ and do $\Psi(j)=\text{GREEDY\_DIMFAC}(\text{WAVE\_DIMFAC}(q_1,...,q_{\rho}))$. Then, order $\Psi$ so that $\bm{\mathcal{F}}(\Psi(j))\leq \bm{\mathcal{F}}(\Psi(j+1))$ for any $j\in\{1,...,\psi-1\}$.
  \State \indent \textbf{STEP 2} Process $\Psi$ visiting its elements from $\Psi(1)$ to $\Psi(\psi)$. For each $\Psi(j)\in \Psi$ do:
  \State \indent \indent $\bullet$ If $\Psi(j)=((k_1,l_1),...,(k_{\rho},l_{\rho}))$, consider the $\rho$-tuple $(q_{(k_1,l_1)},...,q_{(k_{\rho},l_{\rho})})$ and generate a new $\rho$-tuple   $(\tilde{q}_{1},...,\tilde{q}_{\rho})$ ran-
  \State \indent domly permuting exactly $\varpi\in \{2,..., \psi\}$ of its elements.
  \State \indent \indent $\bullet$ Obtain a new suitable solution $\bm{Q}$ doing $\bm{Q}=\text{GREEDY\_DIMFAC}(\text{WAVE\_DIMFAC}(\tilde{q}_{1},...,\tilde{q}_{\rho})$.
  \State \indent \indent $\bullet$ If $\bm{\mathcal{F}}(\bm{Q})<\bm{\mathcal{F}}(\Psi(\psi))$, update $\Psi$ doing $\Psi(\psi)=\bm{Q}$ and reorder the list.
  \State \indent \indent $\bullet$ If this instruction has been visited a maximum number of times, go to RETURN.
  \State \indent \indent $\bullet$  If $j+1=\psi+1$, begin STEP 2 again.
  \State \indent Note: $\text{WAVE\_DIMFAC}(\tilde{q}_1,...,\tilde{q}_{\rho})$ requires $\tilde{q}_i\in\OOmega_i$ for all $i\in\{1,...,\rho\}$ to work. In order to apply the procedure WAVE\_DIMFAC, if $\tilde{q}_{i}\notin\OOmega_i$ for some $i\in\{1,...,\rho\}$, we replace $\tilde{q}_i$ by its $\ell_1$-projection  in $\OOmega_i$.
  \State \indent \textbf{RETURN} $\Psi(1)$ and $\bm{\mathcal{F}}(\Psi(1))$.
  \State \textbf{END PROCEDURE}
  \end{algorithmic}
\end{algorithm}

Our GRASP algorithm for problem DBL takes advantage of the fact we have a tool to generate and evaluate suitable solutions. Initially, in STEP 1, GRASP\_DIMFAC generates a list $\Psi$ of $\psi$ random suitable solutions with procedure WAVE\_DIMFAC and improves them with procedure GREEDY\_DIMFAC. These suitable solutions are ordered in the list $\Psi$ according with their objective values, being the best suitable solution the first in the list.

The randomization part of the GRASP in STEP 2 tries to obtain new suitable solutions from some already available suitable solutions. It performs permutations among the root points of some dimensional facilities ($\varpi$ dimensional facilities, being $\varpi$ a parameter). Given a suitable solution in the list we obtain another one using WAVE\_DIMFAC. This suitable solution may not have the resulting permuted root points  since permuting the positions of the dimensional facilities $P_1,...,P_{\rho}$ in a suitable solution of problem DBL may not provide  another suitable solution, as the interior of the dimensional facilities could intersect or they could not be contained in $\Omega$. Next, we improve that suitable solution with GREEDY\_DIMFAC. If the resulting suitable solution is better than any in the list, we replace the worst suitable solution by the new one, reorder the suitable solutions in the list, and continue the process with the following not yet processed suitable solution in the list. The process is repeated, starting the list by the beginning again if it is necessary, a predefined number of times: termination criterion. The algorithm returns the first element in $\Psi$, i.e., the best suitable solution found for problem DBL, and its objective value.

\subsubsection{Wavefront algorithm}

The main idea of the wavefront algorithm to generate random suitable solutions for problem DBL is the following: since directly locating dimensional facilities $P_1,...,P_{\rho}$ in the demand region in a valid way (i.e., in a way such that its interiors do not intersect) could not be an easy task, we begin by locating in a valid way a shrunken version of them, which is easier, and then we make these shrunken dimensional facilities to grow. The wavefront is shown in Algorithm \ref{algWave}. If $\mathscr{P}_i^{q_i}$ is the homothecy of center $q_i$ and ratio $\lambda\geq 0$ applied to the set $P_i^{q_i}$, in the algorithm, we characterize the location of the set $\mathscr{P}_i^{q_i}$ by the root point $p_i=q_i$, for each $i\in \{1,...,\rho\}$. In addition, we use the following notation in the algorithm: $\ell_1(\mathscr{P}_i^{q_i},\mathscr{P}_j^{q_j})$ is the minimum $\ell_1$-distance  between a point in $\mathscr{P}_i^{q_i}$ and a point in $\mathscr{P}_j^{q_j}$; $\kappa x$ and $\kappa y$ denote the maximum width and  the maximum height of a cell in $\bm{\Omega}$, respectively.

\begin{algorithm}
  \caption{Wavefront algorithm to generate  random suitable solutions for problem DBL} \label{algWave}
  \label{algGreedy}
  \begin{algorithmic}[]
  \State \textbf{PROCEDURE} WAVE\_DIMFAC($q_1,...,q_{\rho}$)
  \State \indent \textbf{STEP 1} For each $i\in\{1,...,\rho\}$, let $\mathscr{P}_i^{q_i}$ be the homothecy of center $q_i$ and ratio $\lambda\in (0,1)$ with $1/\lambda \in\mathbb{N}$ applied to the set $P_i^{q_i}$.
  \State \indent \textbf{STEP 2} If $\{\mathscr{P}_1^{q_1},...,\mathscr{P}_{\rho}^{q_{\rho}}\} = \{P_1^{q_1},...,P_{\rho}^{q_{\rho}}\}$, go to STEP 6. Otherwise:
  \State \indent \indent $\bullet$ Check if $\ell_1(\mathscr{P}_i^{q_i},\mathscr{P}_j^{q_j})\geq 3(\kappa x + \kappa y)$ for all $i,j\in \{1,...,\rho\}$ with $i\not=j$.
  \State \indent \indent \indent $-$ If it is verified, for each $i\in \{1,...,\rho\}$, replace $\mathscr{P}_i^{q_i}$ by the homothecy of center $q_i$ and ratio $\lambda$ applied to it, and begin\State \indent \indent STEP 2 again.
  \State \indent \indent \indent $-$ If it is not verified, go to STEP 3.
  \State \indent \textbf{STEP 3} For each pair $i,j\in \{1,...,\rho\}$ with $i\not=j$, let
  $$\overrightarrow{\upsilon_{ij}} = \begin{cases} \dfrac{\overrightarrow{q_i q_j} }{\Vert \overrightarrow{q_i q_j} \Vert_2} & \hbox{if $\ell_1(\mathscr{P}_i^{q_i},\mathscr{P}_j^{q_j}) < 3(\kappa x + \kappa y)$,} \\ \underline{0} & \hbox{otherwise.} \end{cases}$$
   \State \indent  \textbf{STEP 4}  Do:
   \State \indent \indent $\bullet$ For each $i\in \{1,...,\rho\}$, if $q_i+\vartheta\dfrac{\sum_{j=1}^\rho \overrightarrow{\upsilon_{ji}}}{\Vert \sum_{j=1}^{\rho} \overrightarrow{\upsilon_{ji}} \Vert_2} \in  \OOmega_i$, translate $\mathscr{P}_i^{q_i}$ replacing $q_i$ for $q_i+\vartheta\dfrac{\sum_{j=1}^\rho \overrightarrow{\upsilon_{ji}}}{\Vert \sum_{j=1}^{\rho} \overrightarrow{\upsilon_{ji}} \Vert_2}$. \Comment{$\vartheta > 0$}
 \State \indent \indent $\bullet$ If $\ell_1(\mathscr{P}_i^{q_i},\mathscr{P}_j^{q_j})< 3(\kappa x +  \kappa y)$ for some $i,j\in \{1,...,\rho\}$ with $i\not=j$, go to STEP 3.
 \State \indent \indent $\bullet$ If this line has been revisited a number $\Upsilon_1\in \mathbb{N}$ of consecutive times without pass by a step different from STEP 3 and STEP
 \State \indent  4, go to STEP 5.
 \State \indent \indent $\bullet$  If STEP 4 has been revisited a number $\Upsilon_2\in \mathbb{N}$ of consecutive times without pass by another step, go to STEP 2. Otherwise,
  \State \indent  begin STEP 4 again.
  \State \indent  \textbf{STEP 5}  For each $i\in\{1,...,\rho-1\}$, let $T_i$ be the set of indices in $\{i+1,...,\rho\}$ such that $\ell_1(\mathscr{P}_i^{q_i},\mathscr{P}_j^{q_j})\geq 3(\kappa x +  \kappa y)$.
  \State \indent \indent $\bullet$ For each $i\in\{1,...,\rho-1\}$, if $T_i\not= \emptyset$, compute the point $q^*$ that solves the problem
  $$\max_{q\in \OOmega_i} \min_{j\in T_i} \ell_2(q,q_j).$$
  \State \indent \indent $\bullet$ For each $i\in\{1,...,\rho-1\}$, if $T_i\not= \emptyset$, replace $q_i$ by a point randomly selected in $\mathcal{B}_{\ell_1}(q^*,\varepsilon)$. \Comment{$\varepsilon >0$}
  \State \indent \indent Note: if $\mathcal{B}_{\ell_1}(q^*,\varepsilon)\cap \OOmega_i = \emptyset$, replace $q_i$ by the $\ell_1$-projection  of $q^*$ in $\OOmega_i$.
  \State \indent \indent $\bullet$ Randomly permute the order of the dimensional facilities $\{1,...,\rho\}$.
  \State \indent \indent $\bullet$ Go to STEP 1.
  \State \indent \textbf{STEP 6} Undo the possible permutations applied to the order of the dimensional facilities $\{1,...,\rho\}$ done in STEP 5, i.e., order the indices $\{1,...,\rho\}$ of the dimensional facilities as in the input.
  \State \indent \textbf{STEP 7} For each $i\in\{1,...,\rho\}$, determine the cell $(k_i,l_i)\in \bm{\Omega}$ to which $q_i$ belongs to.
  \State \indent \textbf{RETURN} $((k_1,l_1),...,(k_\rho,l_\rho))$.
  \State \textbf{END PROCEDURE}
  \end{algorithmic}
\end{algorithm}

The wavefront algorithm begins in STEP 1 with a shrunken version $\mathscr{P}_1^{q_1},...,\mathscr{P}_\rho^{q_\rho}$ (determined by parameter $\lambda$) of the sets $P_1^{q_1},...,P_{\rho}^{q_{\rho}}$. In STEP 2, if condition $\ell_1(\mathscr{P}_i^{q_i},\mathscr{P}_j^{q_j})\geq 3(\kappa x + \kappa y)$ is satisfied for all $i,j\in \{1,...,\rho\}$ with $i\not=j$, we can continue making to grow $\mathscr{P}_1^{q_1},...,\mathscr{P}_\rho^{q_\rho}$ applying them a homothecy of ratio $\lambda$. The meaning of the condition above is the following: the algorithm WAVE\_DIMFAC is able to find a suitable solution for problem BL from an initial location $(q_1,...,q_{\rho})\in \OOmega_1\times ... \times \OOmega_\rho$ (not necessarily feasible) of the dimensional facilities $P_1,...,P_\rho$; from this suitable solution of problem BL we will obtain a suitable solution of problem DBL moving each root point $p_i$ of the dimensional facilities from $q_i$ to the center of the cell of $\bm{\Omega}$ to which $q_i$ belongs to (STEP 7 of the algorithm); however, this movement may lead to some cases where  the interior of the cell facilities intersect, producing a non-suitable location of the facilities in problem DBL; it is easy to see that the condition above (onwards, the minimum $\ell_1$-separation-condition) avoids this undesirable situation in STEP 7. Since $1/\lambda\in \mathbb{N}$, it holds that $\{\mathscr{P}_1^{q_1},...,\mathscr{P}_{\rho}^{q_{\rho}}\} = \{P_1^{q_1},...,P_{\rho}^{q_{\rho}}\}$ after a finite number of homothecies of ratio $\lambda$ applied to sets $\mathscr{P}_1^{q_1},...,\mathscr{P}_{\rho}^{q_{\rho}}$ in STEP 2. If the minimum $\ell_1$-separation-condition  is not satisfied for some $i,j\in \{1,...,\rho\}$ with $i\not=j$ in STEP 2, we have to separate the pairs of problematic shrunken dimensional facilities.

The separation of the dimensional facilities $\mathscr{P}_1^{q_1},...,\mathscr{P}_\rho^{q_\rho}$ in STEP 3 and STEP 4 is done with the separator vectors $\overrightarrow{\upsilon}$. The separator vector $\overrightarrow{\upsilon_{ij}}$ gives the direction that moves away the root point $p_j=q_j$ of the dimensional facility $\mathscr{P}_j^{q_j}$ from the root point $p_i=q_i$ of the dimensional facility $\mathscr{P}_i^{q_i}$ such that does not verify the minimum $\ell_1$-separation-condition with respect to $\mathscr{P}_j^{q_j}$ (if the minimum $\ell_1$-separation-condition is verified then $\overrightarrow{\upsilon_{ij}}=\underline{0}$). Thus, $(\sum_{j=1}^{\rho} \overrightarrow{\upsilon_{ij}})/\Vert \sum_{j=1}^\rho \overrightarrow{\upsilon_{ij}} \Vert_2$ can be used as a direction to separate  $\mathscr{P}_j^{q_j}$ from the other sets which are too close to it. Parameter $\vartheta$ controls the distance of the separations. Separation steps are applied $\Upsilon_2\in \mathbb{N}$ times if all the pairs of dimensional facilities satisfy the minimum $\ell_1$-separation-condition in each iteration, otherwise, $\overrightarrow{\upsilon}$ has to be updated and the separation process has to begin again. So, the separation process ends when $\Upsilon_2$ iterations are done fixed $\upsilon$ or when an overall  number $\Upsilon_1\in\mathbb{N}$ of iterartions is reached. Note that this separation process  is especially effective when the root points of the dimensional facilities are chosen having some sort of centrality  meaning with respect to its shape, as the centroid or similar relevant points.

If the maximum number of iterations is reached (the third line of STEP 4 has been revisited a number $\Upsilon_1\in \mathbb{N}$ of consecutive times without pass by a step different from STEP 3 and STEP 4), we have to relocate in STEP 5 the root points of the dimensional facilities $\mathscr{P}_1^{q_1},...,\mathscr{P}_\rho^{q_\rho}$ which not satisfy the minimum $\ell_1$-separation-condition and begin the growing process again (from STEP 1). Root point $q_i$ is relocated maximizing the minimum Euclidean distance from the root points $q_j$ of dimensional facility $\mathscr{P}_j^{q_j}$ violating the minimum $\ell_1$-separation-condition with $\mathscr{P}_i^{q_i}$: found the solution $q^*$ of the problem $\max_{q\in \OOmega_i}\min_{j\in T_i} \ell_2(q,q_j)$, we relocate point $q_i$ at a point randomly selected in a neighbourhood of $q^*$ (we use the ball $\mathcal{B}_{\ell_1}(q^*,\varepsilon)$ as that neighbourhood). Actually, in STEP 5 of our algorithm, instead of solving  a global maximin problem, we solve a local maximin problem which needs a random point to start, making the process more random. As relocation of points $q_i$ done in STEP 5 depends on the order of the dimensional facilities, we then permute the order of the dimensional facilities to get more randomness in the algorithm. That pemutations has to be undone (STEP 6) before to determine the suitable solution of dimensional facilities $P_1,...,P_{\rho}$ found (STEP 7) and to return it.

\subsubsection{Greedy algorithm}

\indent Consider now that we are given a suitable solution $\bm{Q}\in \bm{\Gamma}$ for problem DBL. The greedy algorithm shown in Algorithm \ref{algGreedy} performs a local search to improve the objective value given by the current suitable solution $\bm{Q}$. Specifically, if $(k_i,l_i)\in\bm{\Omega}$ is the cell in whose center is located the root point $p_i$ of the dimensional facility $P_i$, the greedy algorithm evaluates the objective function of  problem DBL if we move  $p_i$ to the centers of the cells  in a neighbourhood of $(k_i,l_i)$ (determined by parameters $\Delta_k,\Delta_l \in \mathbb{N}$) keeping the position of the remaining root points, provided that the movement produces a suitable solution. This is done for each $i\in\{1,...,\rho\}$. Then, we relocate the dimensional facilities whose movement to a neighbor cell provides the best improvement of the objective value. This process is repeated until no improvement is obtained.

\begin{algorithm}
  \caption{Greedy algorithm to improve a suitable solution of problem DBL} \label{algGreedy}
  \label{algGreedy}
  \begin{algorithmic}[]
  \State \textbf{PROCEDURE} GREEDY\_DIMFAC($((k_1,l_1),...,(k_{\rho},l_{\rho}))$)
  \State \indent \textbf{STEP 1} For each $i\in\{1,...,{\rho}\}$, compute the best improvement in the objective value of problem DBL given by a suitable solution of the form $((k_1,l_1),...,(k_i+j_k, l_i+j_l),...,(k_{\rho},l_{\rho}))$, for any $j_k\in\{-\Delta_k,..., \Delta_k\}$ and $j_l\in\{-\Delta_l,..., \Delta_l\}$.
  \State \indent \indent $\bullet$ If no improvement is achieved for all $i\in \{1,...,\rho\}$, then $((k_1,l_1),...,(k_{\rho},l_{\rho}))$ is the suitable solution obtained by the
  \State \indent  algorithm. Go to RETURN.
  \State \indent \indent $\bullet$ Otherwise, go to STEP 2.
  \State \indent \textbf{STEP 2} If $i^*$ is the index which provides the best improvement in the objective value of  problem DBL in STEP 1 among all the indices in  $\{1,...,\rho\}$, replace $((k_1,l_1),...,(k_{\rho},l_{\rho}))$ by $((k_1,l_1),...,(k_{i^*}+j_k^*, l_{i^*}+j_l^*),...,(k_{\rho},l_{\rho}))$, being $j_k^*$ and $j_l^*$ the values of $j_k$ and $j_l$ that give the best improvement for index $i^*$. Go to STEP 1.
  \State \indent \textbf{RETURN} $((k_1,l_1),...,(k_{\rho},l_{\rho}))$.
  \State \textbf{END PROCEDURE}
  \end{algorithmic}
\end{algorithm}

Note that Algorithm \ref{algGreedy} is presented for a grid $G$ where the neighbors of a cell $(r,s)\in\bm{G}$ are determined by the adjacent horizontal and vertical cells in $\bm{G}$. This is done for the sake of simplicity. However, it is easy to extend the Algorithm \ref{algGreedy} to more general grids if the neighborhood of a cell is well defined in the considered grid.

\subsection{Computational experiments}

This section reports some computational experiments performed to show the usefulness  of the proposed methodologies to solve problem BL. Our code is implemented in MATLAB R2017A and  to solve the MILP programs it makes calls to the XPRESS solver version 8.0. All experiments were run in a computer DellT5500 with a processor Intel(R) Xeon(R) with a CPU X5690 at 3.75 GHz and 48 GB of RAM memory.

We have included several test examples. Some of them were already proposed in \cite{optPartition} and some others are new. Including the new examples we want to compare the diversity of the solutions when different utilities, distance measures, shapes of the dimensional facilities, cost functions and densities are combined. In addition, we also show how the solutions of the examples are affected when they are included in the bilevel approach combining  the different elements of the problem.

In all cases, we use regular grids to approximate the exact solution of the bilevel problem BL. We always begin by solving the problem by means of our heuristic algorithm (Algorithm \ref{algGRASP}) with the following parameters. Algorithm \ref{algGRASP} runs with a list of solutions of length $\psi=50$, $ \varpi=2$ root points to be permuted and the stopping criterion, in STEP 2, consists of processing the list without improvement. Algorithm \ref{algWave} is executed with a homothecy ratio $\lambda=0.05$ (which results in applying the hotothecy transformation, at least, $1/\lambda=20$ times), a separation parameter $\vartheta=0.05$ and stopping separation criterion $\Upsilon_1=9$ and $\Upsilon_2=3$. Finally, Algorithm \ref{algGreedy} is applied with $\Delta_k=\Delta_l=5$. Once the heuristic solution is found, we next improve that solution adding it as initial feasible solutions to the exact MILP formulation and then we let it run for 4 hours (14400s) of CPU time. The performance of the GRASP heuristic and the MILP formulation is reported in Table \ref{table1}. This table shows, for both methods, the time required for the preproceessing of the information   (PT), the execution time once the information has been preprocessed  (ET), the best objective value found (BOVF) and  the gap obtained for the solution provided by the MILP formulation (GAP).

\bigskip

\noindent \emph{Example 1} \,\,Our first test  illustrates how the approach in this paper applies to one example borrowed from the literature \cite{optPartition}. First, we consider that the demand region $\Omega$, the dimensional facilities $P_1,P_2,P_3$ and all the elements of the lower level problem are the ones given in Example 4.1 in \cite{optPartition}. In addition, we will assume that there also exists an installation cost described by the base installation cost density
$B(q) = 6(x-y)$ if $x\geq y$ and $B(q) = 0$ otherwise, and the installation cost functions $I_i(\omega_i^I) = \omega_i^I$ for all $\omega_i^I\in[0,1]$.

To better illustrate the performance of our methodology, we distinguish two different situations.

\bigskip

\noindent \emph{Example 1.1} \,\,The first situation includes non-uniform demand density $D$ on $\Omega$, given by
$D(q) = 8 (x-0.5)$ if $x\geq 0.5$ and $D(q)=0$ otherwise.
In addition, we also consider the following non-zero lost demand cost $L(\omega^L)=\omega^L$ for all $\omega^L\in [0,1]$ and zero congestion costs $C_i(\omega_i^C) = 0$ for all $\omega_i^C\in[0,1]$, for each $i\in\{1,...,3\}$. Note that, in this example, the problem does not explicitly depend on the partition of the demand region.

\bigskip

\noindent \emph{Example 1.2} \,\,The second example considers uniform demand density $D(q)=1$, as in the original example in \cite{optPartition}, it does not apply any lost demand cost (i.e., $L(\omega^L)=0$ for all $\omega^L\in [0,1]$) but it includes the following congestion costs:
$ C_i(\omega_i^C) = \omega_i^C$ if $\omega_i^C< \tilde{\omega}_i^C/3$ and $C_i(\omega_i^C)=\omega_i^C + 100(\omega_i^C-\tilde{\omega}_i^C/3)$ otherwise, for all $\omega_i^C\in[0,1]$ and each $i\in\{1,...,3\}$, being $\tilde{\omega}_i^C=1-\int_{\bigcup_{i=1}^3 P_i} D(q)dq$. The inclusion of this congestion cost term makes the problem to depend on the partition of the demand region. The choice of this particular expression forces an approximate equal splitting of the demand among the three facilities.

We have solved the location-allocation problems defined by these situations and the results can be seen in Fig. \ref{example1} and Table \ref{table1}.


\begin{figure}
\centering
\begin{minipage}[!ht]{0.45\textwidth}
\centering
\includegraphics[scale=0.5]{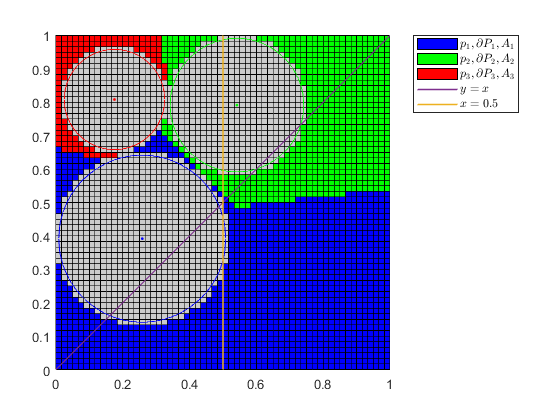}\caption*{\small (a) Heuristic solution of Example 1.1}
\end{minipage}
\begin{minipage}[!ht]{0.45\textwidth}
\centering
\includegraphics[scale=0.5]{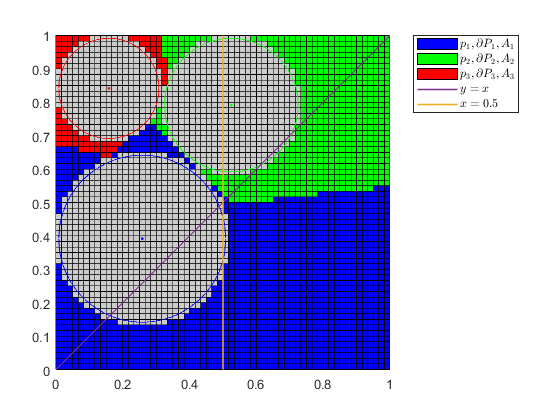}\caption*{\small (b) Exact solution of Example 1.1}
\end{minipage}\\
\begin{minipage}[!ht]{0.45\textwidth}
\centering
\includegraphics[scale=0.5]{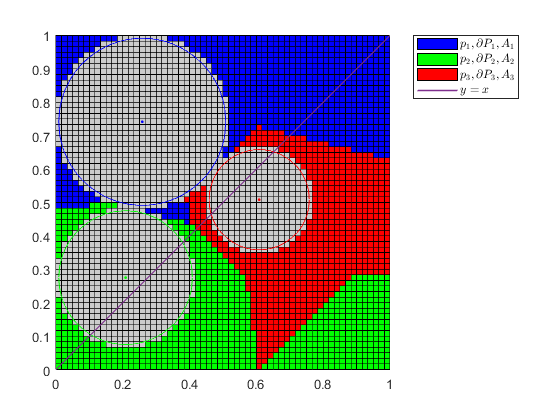}\caption*{\small (c) Heuristic solution of Example 1.2}
\end{minipage}
\begin{minipage}[!ht]{0.45\textwidth}
\centering
\includegraphics[scale=0.5]{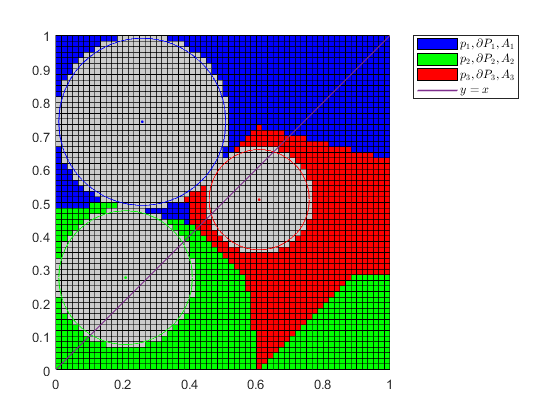}\caption*{\small (d) MILP solution of Example 1.2}
\end{minipage}
\caption{Graphical output of the solutions for Example 1 \label{example1}}
\end{figure}

The graphical output of our algorithms reports the results that could have been anticipated. In the Example 1.1 (Fig. \ref{example1}.(a)-(b)), since the base installation cost density is null in the upper triangle and the demand is also null in the left half of the region $\Omega$, the dimensional facilities tend to be located, as much as possible, in the upper triangle and in the left half of the square region. However, their measure does not allow them to be completely included in that region. This is the reason why two of them have a portion on the higher installation cost (lower triangle) and higher lost demand (right half square) parts of the diagram. The MILP formulation provides a solution (Fig. \ref{example1}.(b)) slightly better than the one obtained by the GRASP heuristic  (Fig. \ref{example1}.(a)). The reader should observe that the solution provided by the MILP formulation is optimal (it has zero GAP) as it can be seen in Table \ref{table1}.

The solution of the location-allocation problem of Example 1.2 is shown in Fig. \ref{example1}.(c)-(d). The result shown in these figures is consistent and it shows that the installation cost does not fully determine the final location of the dimensional facilities. This can be explain because, a non approximate equal splitting of the demand among the facilities, is highly penalized by the installation cost functions $I_1,I_2,I_3$. Even so, the solution attempts to place the facilities in the upper triangle to also reduce the installation cost, actually, the biggest facility is completely contained in the upper triangle. The solution obtained by the MILP formulation (Fig. \ref{example1}.(d)) is the same that the one provided by the GRASP heuristic  (Fig. \ref{example1}.(c)). This means that, in this case,  the MILP formulation is not able to find a better solution than the GRASP heuristic within the CPU time limit. However, the use of the MILP approach provides the GAP of the solution obtained (see Table \ref{table1}). In Fig. \ref{example1}.(c)-(d). Finally, in Fig. \ref{example1}(c)-(d), it seems that the demand region $A_2$, assigned to the second facility $P_2$, has two connected components. This fact is not strange if one has in mind the properties of bisectors for different distance measures, as it is our case, see \cite{icking} and \cite{NP05} for more details.

\bigskip

\noindent \emph{Example 2} \,\,This situation is included to illustrate the use of different utilities in the model. We consider that the demand region $\Omega$ is the unit square and there are three dimensional facilities. The first one,
$P_1$,  is a non-convex polygon with utility based on a conservative planner given by $\displaystyle u_1(q,q_1) = 0.8 \max_{\tilde{q}\in P_1^{q_1}} \ell_2(q-\tilde{q})$.
The second facility, $P_2$, is a regular pentagon and its utility,  is given by $u_2(q,q_2) = \ell_2(q-q_2)$, where $q_2$ is the centroid of $P_2$. Finally, the last facility, $P_3$, is the unit ball of a weighted Euclidean norm, namely $P_3:=\{(x,y): \sqrt{75 x^2 + 150 y^2}\le 1\}$; and its utility $u_3(q,q_3) = 0.2\gamma_{P_3}(q-q_3)$. This is the case where the utility is induced by a Minkowski functional.

The remaining parameters of this example are the following: $a_1=a_2=a_3=1$ and the demand density is uniform, namely $D(q) = 1$. The congestion costs, $C_i$ are: $C_i(\omega^C)=\omega^C$ if $\omega^C\le 0.25$ and $C_i(\omega^C)=\omega^C+\aleph_i\omega^C$  if $\omega^C\ge 0.25$, where $\aleph_1=\aleph_2=0.5$ and $\aleph_3=0.25$;  and the lost demand cost is $L(\omega^L)=\omega^L$.

Finally, the base installation cost density is defined by the expression $$B(q) = \begin{cases} 2(x+y), & \hbox{if $x\leq 0.5$ and $y\leq 0.5$,}  \\ 2(x+1-y), & \hbox{if $x\leq 0.5$ and $y > 0.5$,}  \\ 2(1-x+y), & \hbox{if $x> 0.5$ and $y\leq 0.5$,}  \\ 2(1-x+1-y), & \hbox{if $x> 0.5$ and $y> 0.5$.}  \end{cases}$$
This function accumulates the density in the center of the square since the bivariate density function increases from the vertices of the unit square to its center. We take as base installation costs $I_1(\omega^I) = I_2(\omega^I) = I_3(\omega^I) = 5\omega^I$.

We solve this configuration for grids with different sizes to illustrate the convergence of our discretization approach. We have chosen grids of $20\times 20$ (see Fig. \ref{example2}(a)-(b)), $30\times 30$ (see Fig. \ref{example2}(c)-(d)), $40\times 40$ (see Fig. \ref{example2}(e)-(f)), $50\times 50$ (see Fig. \ref{example2}(g)-(h)) and $60\times 60$ (see Fig. \ref{example2}(i)-(j)).

From our results we report that in all cases (i.e., for the different grid sizes) the exact MILP approach could not improve the solution found by our heuristic algorithm. In the $20\times 20$ grid case the solution found is optimal, as certified by the MILP problem (see Table \ref{table1}). The configuration of the solutions found can be explained by the shapes of the densities and costs functions. Since the base installation density is lower in the vicinity of the vertices of $\Omega$ and the demand is uniform, the facilities try to locate the closer to the vertices the better. Nevertheless, the congestion cost makes that one of the facilities that is less congested, $P_3$,  moves closer to $P_1$ to cannibalize part of its demand. The two connected components in the partition allocated to $P_1$ in Fig. \ref{example2}(i)-(j) can be explained, as before, by the properties of bisectors with different norms. Finally, one observes some stability in the solutions whenever the grids are denser.

\begin{figure}
\centering
\begin{minipage}[!ht]{0.45\textwidth}
\centering
\includegraphics[scale=0.35]{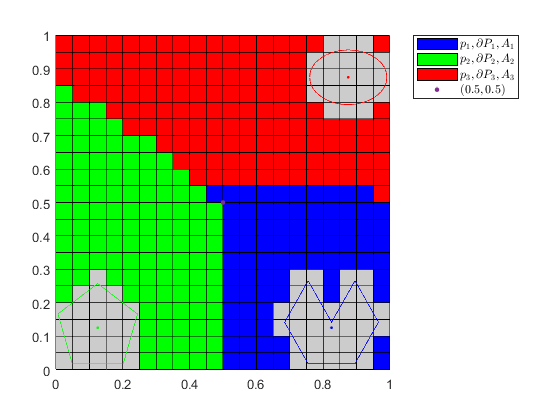}\caption*{\small (a) Heuristic solution of Example 2 (20$\times$20)}
\end{minipage}
\begin{minipage}[!ht]{0.45\textwidth}
\centering
\includegraphics[scale=0.35]{Ex22020H.png}\caption*{\small (b) Exact solution of Example 2 (20$\times$20)}
\end{minipage}\\
\begin{minipage}[!ht]{0.45\textwidth}
\centering
\includegraphics[scale=0.35]{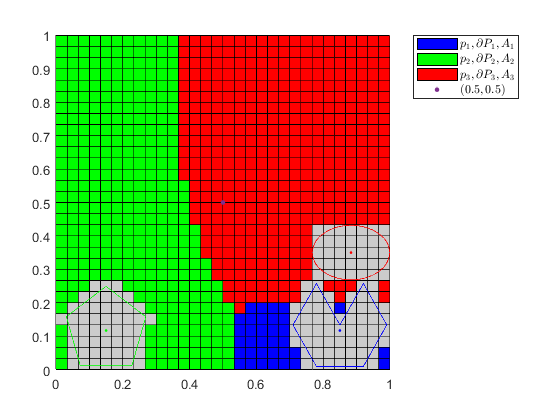}\caption*{\small (c) Heuristic solution of Example 2 (30$\times$30)}
\end{minipage}
\begin{minipage}[!ht]{0.45\textwidth}
\centering
\includegraphics[scale=0.35]{Ex23030H.png}\caption*{\small (d) MILP solution of Example 2 (30$\times$30)}
\end{minipage}
\begin{minipage}[!ht]{0.45\textwidth}
\centering
\includegraphics[scale=0.35]{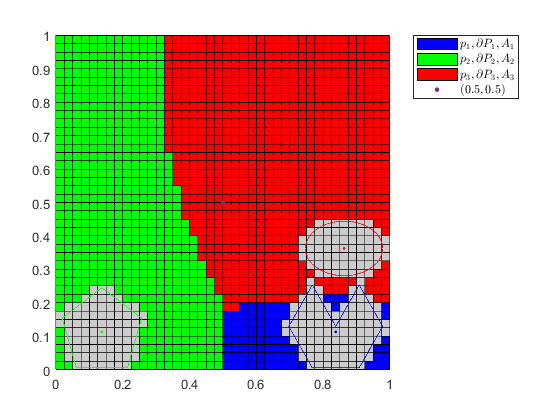}\caption*{\small (e) Heuristic solution of Example 2 (40$\times$40)}
\end{minipage}
\begin{minipage}[!ht]{0.45\textwidth}
\centering
\includegraphics[scale=0.35]{Ex24040H.png}\caption*{\small (f) MILP solution of Example 2 (40$\times$40)}
\end{minipage}
\begin{minipage}[!ht]{0.45\textwidth}
\centering
\includegraphics[scale=0.35]{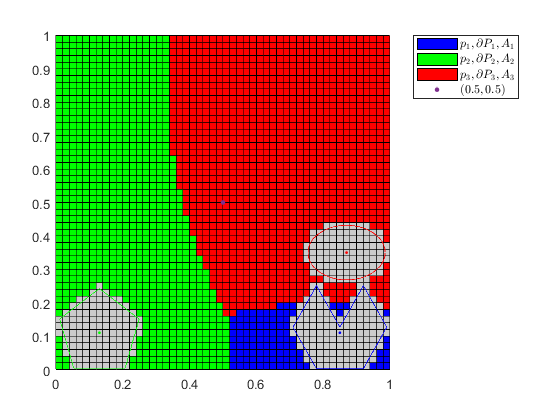}\caption*{\small (g) Heuristic solution of Example 2 (50$\times$50)}
\end{minipage}
\begin{minipage}[!ht]{0.45\textwidth}
\centering
\includegraphics[scale=0.35]{Ex25050H.png}\caption*{\small (h) MILP solution of Example 2 (50$\times$50)}
\end{minipage}
\begin{minipage}[!ht]{0.45\textwidth}
\centering
\includegraphics[scale=0.35]{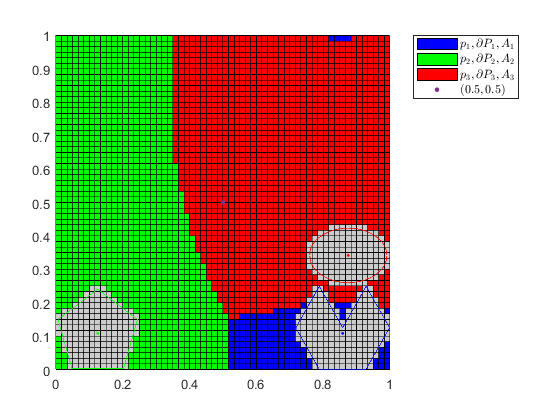}\caption*{\small (i) Heuristic solution of Example 2 (60$\times$60)}
\end{minipage}
\begin{minipage}[!ht]{0.45\textwidth}
\centering
\includegraphics[scale=0.35]{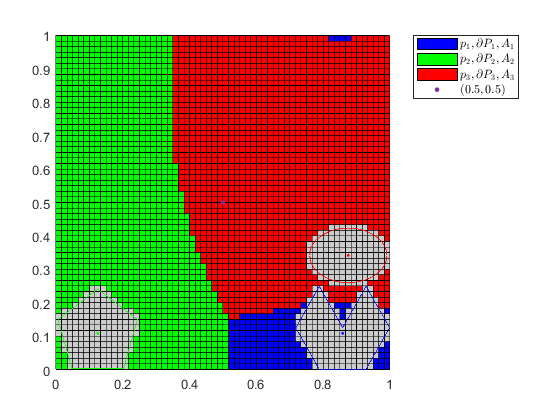}\caption*{\small (j) MILP solution of Example 2 (60$\times$60)}
\end{minipage}
\caption{Graphical output of the solutions for Example 2 \label{example2}}
\end{figure}

\bigskip

\noindent \emph{Example 3} \,\,This situation is included to illustrate our methodology with a larger number of facilities, 10, and also with different congestion costs associated to each of them. We assume an utility $u_i(q,q_i)=\ell_ 2(q-q_i)$, for all $i=1,\ldots,10$. Once again, we consider that the demand region $\Omega$ is the unit square and there are ten tetrominoes as dimensional facilities. $\Omega$ is discretized in a $60\times 60$ grid.  Since, our tetrominoes have small measure with respect to $\Omega$, this example considers that the installation and lost demand costs are negligible and thus we take them as null.

We report two examples that depend on different demand density functions and in both cases the congestion costs for the tetrominoes are the same. These congestion costs are: $C_i(\omega^C)=\omega^C$ if $\omega^C\le \aleph_i$ and $C_i(\omega^C)=\aleph_i+100(\omega^C-\aleph_i)$  if $\omega^C\ge \aleph_i$ where $\aleph=1/30(5,5,3,3,3,3,3,3,1,1)$.
\bigskip

\noindent \emph{Example 3.1} \,\,In this case, we have chosen a uniform demand density $D(q)=1$.

\bigskip

\noindent \emph{Example 3.2} \,\,This second case considers as demand density for each $q=(x,y)\in \Omega$,
$D(q) =  3(x-y)$ if $x\geq y$, and  $D(q) =3(y-x)$, if $x<y$. Observe that this density is null on the diagonal of $\Omega$ and is maximal at the points $(1,0)$ and $(0,1)$.

As it can be seen in Fig. \ref{example3}, the results provided by the two methods, in the two cases (Example 3.1 and 3.2), are consistent. As it occurs in Example 2, the exact MILP approach could not improve the solution found by our GRASP algorithm in both cases. Analyzing the results in each case, we observe that in Example 3.1 (Fig. \ref{example3} (a)-(b)) the facilities are spread more or less ``uniformly'' on the unit square. This is due to the considerd uniform demand density. On the other hand, in Example 3.2 (Fig. \ref{example3} (c)-(d)) the facilities are mainly concentrated close to the points $(1,0)$ and $(0,1)$, where the demand density is much higher. It is also interesting to remark the excellent behavior of our MILP formulation in Example 3.1. We observe in Table \ref{table1} that, given the structure of the problem defined in  Example 3.1 (there is no installation cost, no lost demand cost, and all the cells have the same demand density), the MILP formulation is able to prove  optimality (GAP (\%) is zero) of the solution found by the GRASP heuristic in a rather short computing time.
\bigskip

\begin{figure}
\centering
\begin{minipage}[!ht]{0.45\textwidth}
\centering
\includegraphics[scale=0.5]{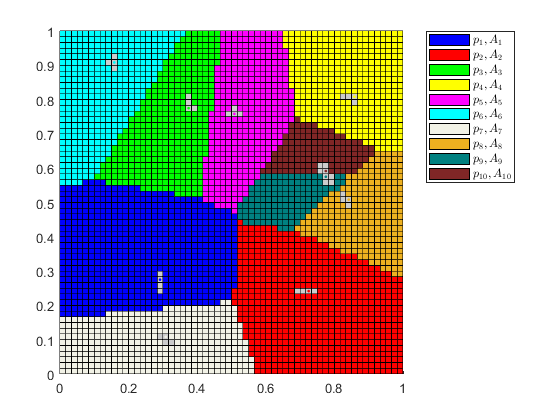}\caption*{\small (a) Heuristic solution of Example 3.1}
\end{minipage}
\begin{minipage}[!ht]{0.45\textwidth}
\centering
\includegraphics[scale=0.5]{Ex316060H.png}\caption*{\small (b) Exact solution of Example 3.1}
\end{minipage}\\
\begin{minipage}[!ht]{0.45\textwidth}
\centering
\includegraphics[scale=0.5]{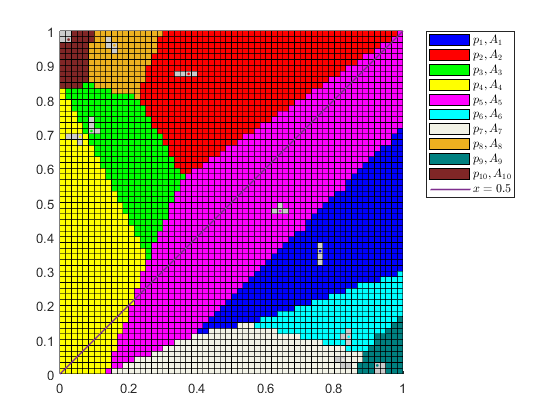}\caption*{\small (c) Heuristic solution of Example 3.2}
\end{minipage}
\begin{minipage}[!ht]{0.45\textwidth}
\centering
\includegraphics[scale=0.5]{Ex326060H.png}\caption*{\small (d) MILP solution of Example 3.2}
\end{minipage}
\caption{Graphical output of the solutions for Example 3 \label{example3}}
\end{figure}

\begin{table}[htb]
\centering
\begin{tabular}{|l|l|l|l|l|l|l|l|l|}
\hline
\multicolumn{2}{|c|}{Method} &  \multicolumn{3}{c|}{GRASP heuristic} & \multicolumn{4}{c|}{MILP formulation}\\ \cline{3-9} \hline
Example & Grid & PT (s) & ET (s) & BOVF & PT (s) & ET (s) & BOVF & GAP (\%)\\
\hline \hline
1.1 & $60\times 60$ & 11336 & 515 & 0.1109 & 469 & 3306 & 0.0995 & 0.00\\ \cline{1-9}
1.2 & $60\times 60$ & 11496 & 350 & 0.6479 & 472 & 14400 & 0.6479 & 8.89\\ \cline{1-9}
2 & $20\times 20$ & 52 & 101 & 1.4594 & 48 & 1209 & 1.4594 & 0.00 \\ \cline{1-9}
2 & $30\times 30$ & 331 & 193 & 1.3236 & 249 & 14400 & 1.3236 & 20.56 \\ \cline{1-9}
2 & $40\times 40$ & 1422 & 364 & 1.3012 & 724 & 14400 & 1.3012 & 22.26 \\ \cline{1-9}
2 & $50\times 50$ & 5261 & 524 & 1.2451 & 1845 & 14400 & 1.2451 & 22.80 \\  \cline{1-9}
2 & $60\times 60$ & 14333 & 721 & 1.2123 & 3817 & 14400 & 1.2123 & 25.24 \\ \cline{1-9}
3.1 & $60\times 60$ & 108802 & 55356 & 3.1889 & 4670 & 450 & 3.1889 & 0.00 \\ \cline{1-9}
3.2 &  $60\times 60$ & 110915 & 25303 & 2.1139 & 4824 & 14400 & 2.1139 & 42.97 \\ \cline{1-9}
\end{tabular}
\caption{GRASP heuristic and MILP formulation performance}
\label{table1}
\end{table}

\section{Conclusions}

This paper gives a first complete proof of existence of optimal solutions of a general location-allocation problem with dimensional facilities. This result includes as particular instances previously published results in the field with dimensionless facilities (point facilities). It also provides two  methods to solve this problem using sequences of solutions for a discrete approximation of the problem. One is exact and it is based on a new mixed-integer linear programming formulation and the other one is a GRASP heuristic that results in very good solutions.

This paper has a  number of possible extensions that may open some interesting research lines. Among them, we would like to mention relaxing some conditions ensuring existence of optimal solutions, as for instance the continuity of the utilities in the objective function of the lower level problem, although this is beyond the scope of this paper. In addition, these results can be extended to any finite dimension space at the price of  increasing the complexity of the discrete models that then become exponential in the dimension of the space.

\begin{acknowledgements}
This paper was originated during a visit of Prof. L. Mallozzi at the University of Seville supported by the PhD Program Mathematics. The authors want to thanks Prof. A. Lewis for his suggestion to tackle the  general location-allocation problem using a discretization scheme suggested during a presentation of this material in a seminar given during the previously mentioned visit. Finally, we would also like to thank the Ministry of Economy and Competitiveness of Spanish Government for partially funding our research via project MTM2016-74983.
\end{acknowledgements}


\end{document}